\definecolor{bleu_sombre}{rgb}{0,0,0.6}  \definecolor{rouge_sombre}{rgb}{0.8,0,0}\definecolor{vert_sombre}{rgb}{0,0.6,0}
\theoremstyle{plain}
\newtheorem{theorem}{{Theorem}}[section] %\sc{Th?or?me} pour avoir des petites capitales (mais ce n'est plus en gras)
\newtheorem*{theorem*}{{Theorem}}
\newtheorem{proposition}[theorem]{Proposition}
\newtheorem*{proposition*}{Proposition}
\newtheorem*{corollary*}{Corollary}
\newtheorem{lemma}[theorem]{Lemma}
\newtheorem{assumption}[theorem]{Assumption}
\newtheorem*{lemma*}{Lemma}
\theoremstyle{definition}
\newtheorem*{definition*}{Definition}
\theoremstyle{remark}
\newtheorem{remark}[theorem]{Remark}
\renewcommand{\leq}{\leqslant}	\renewcommand{\geq}{\geqslant}
\renewcommand{\bar}[1]{\overline{#1}}
\newcommand{\abs}[1]{\left\vert #1\right\vert}        % valeur absolue
\newcommand{\nr}[1]{\left\Vert #1\right\Vert}  % norme
\newcommand{\innp}[2]{\left< #1 , #2 \right>}         % produit scalaire (inner product)  
\newcommand{\Ii}[2] {\llbracket #1,#2 \rrbracket}	% intervalle d'entiers.
\newcommand{\R}{\mathbb{R}}	\newcommand{\C}{\mathbb{C}}
\newcommand{\N}{\mathbb{N}}	
\newcommand{\Z}{\mathbb{Z}}
\newcommand{\st}{\,:\,}	
\newcommand{\mBV} {m_{\mathbf{B},V}}
\newcommand{\eps}{\varepsilon}
\renewcommand{\O}{\Omega}
\renewcommand{\a}{\alpha}
\renewcommand{\b}{\beta}
\renewcommand{\Re}{\mathrm{Re}\,}
\renewcommand{\Im}{\mathrm{Im}\,}
\newcommand{\rot}{\mathop{\mathrm{rot}}\nolimits}
\newcommand{\grad}{\mathop{\mathrm{grad}}\nolimits}
\newcommand{\der}{\mathrm{d}}
\newcommand{\Dom}{\mathsf{Dom}}
\newcommand{\Ker}{\mathsf{Ker}}
\newcommand{\Ran}{\mathsf{Ran}}
\newcommand{\supp}{\mathop{\mathrm{supp}}\nolimits}
\newcommand{\ind}{\mathop{\mathrm{ind}}\nolimits}
\newcommand{\codim}{\mathop{\mathrm{codim}}\nolimits}
\definecolor{DarkGreen}{rgb}{0,0.5,0.1}
\newcommand\soutD{\bgroup\markoverwith
{\textcolor{DarkGreen}{\rule[.5ex]{2pt}{1pt}}}\ULon}
\newcommand{\Hm}[1]{\leavevmode{\marginpar{\tiny%
$\hbox to 0mm{\hspace*{-0.5mm}$\leftarrow$\hss}%
\vcenter{\vrule depth 0.1mm height 0.1mm width \the\marginparwidth}%
\hbox to 0mm{\hss$\rightarrow$\hspace*{-0.5mm}}$\\\relax\raggedright
#1}}}
\begin{document}

\title[Non-accretive Schr\"odinger operators]
{Non-accretive Schr\"odinger operators and exponential decay of their eigenfunctions}

\author{D. Krej{\v{c}}i{\v{r}}{\'{\i}}k}
\address[D. Krej{\v{c}}i{\v{r}}{\'{\i}}k]{
Department of Theoretical Physics, Nuclear Physics Institute ASCR, 
25068 \v{R}e\v{z}, Czech Republic}
\email{krejcirik@ujf.cas.cz}

\author{N. Raymond}
\address[N. Raymond]{IRMAR, Universit\'e de Rennes 1, Campus de Beaulieu, F-35042 Rennes cedex, France}
\email{nicolas.raymond@univ-rennes1.fr}

\author{J. Royer}
\address[J. Royer]{Institut de math\'ematiques de Toulouse, Universit\'e Toulouse 3, 118 route de Narbonne, F-31062 Toulouse cedex 9, France}
\email{julien.royer@math.univ-toulouse.fr}

\author{P. Siegl}
\address[P. Siegl]{
Mathematical Institute, 
University of Bern,
Alpeneggstrasse 22,
3012 Bern, Switzerland
\& On leave from Nuclear Physics Institute ASCR, 25068 \v Re\v z, Czech Republic}
\email{petr.siegl@math.unibe.ch}

\subjclass[2010]{}

\keywords{Schr\"odinger operators, complex potentials, 
Agmon estimates, domain separation}

\thanks{
This work was partially supported by the IUF grant of S. V\~u Ng\d{o}c. The research of P.S. is supported by the \emph{Swiss National Foundation}, SNF Ambizione grant No. PZ00P2\_154786. D.K.\ was  supported by the project RVO61389005 and the GACR grant No.\ 14-06818S.
}

\date{}

\begin{abstract}
We consider non-self-adjoint electromagnetic Schr\"odinger operators 
on arbitrary open sets with complex scalar potentials 
whose real part is not necessarily bounded from below.
Under a suitable sufficient condition on the electromagnetic potential,
we introduce a Dirichlet realisation as a closed densely defined operator 
with non-empty resolvent set
and show that the eigenfunctions corresponding to discrete eigenvalues
satisfy an Agmon-type exponential decay. 
\end{abstract}

\maketitle

%---------------------%
\section{Introduction}
%---------------------%
%

\subsection{Context and motivation}
We consider the electromagnetic Schr\"odinger operator
\begin{equation}\label{operator.intro}
  (-i\nabla+\mathbf{A})^2+V 
  \qquad \mbox{in} \qquad L^2(\Omega)
  \,,
\end{equation}
subject to Dirichlet boundary conditions on~$\partial\Omega$,
where~$\Omega$ is an arbitrary open subset of~$\R^d$.
The functions $V:\Omega\to\C$ and $\mathbf{A}:\Omega\to\R^d$
are the scalar (electric) and vector (magnetic) potentials, respectively.

If $d=3$ and~$V$ is real-valued, 
the self-adjoint Dirichlet realisation of~\eqref{operator.intro}
is the Hamiltonian of a quantum particle 
constrained to a nanostructure~$\Omega$
and subjected to an external electromagnetic field 
$(-\grad V,-\rot \mathbf{A})$. The literature on the subject is enormous 
and we restrict ourselves to
referring to the recent book~\cite{Raymond} 
with an extensive bibliography.

Although complex-valued potentials~$V$ have appeared in quantum theory
from its early years, too, notably in the context of effective Hamiltonians
for open systems (see, e.g., \cite{Exner-open}) and resonances 
(see \cite{Abramov-Aslanyan-Davies_2001} for a more recent study),
the corresponding spectral theory is much less developed.
The interest in non-self-adjoint Schr\"odinger operators
have been renewed at the turn of the millenium
with the advent of the so-called quasi-Hermitian quantum mechanics 
(see \cite{KS-book} for a mathematically oriented review).
There are also motivations in other areas of physics,
for instance, superconductivity 
(see~\cite{AH2014} for a mathematical treatement)
and optics with a number of recent experiments 
(see, e.g., \cite{Regensburger_2012}).
Finally, Schr\"odinger operators with potentials 
having a complex coupling constant (in fact spectral parameter) appear naturally
in the study of the damped wave equation (see, e.g., \cite{sjostrand00,boucletr14}).

\subsection{About the main results}
Our main result is the Agmon-type exponential decay of eigenfunctions corresponding to discrete eigenvalues of \eqref{operator.intro}, cf.~Theorem~\ref{theo.main}, which can be viewed as a non self-adjoint version of the Agmon-Persson estimates, see \cite{Persson60, Agmon85}. We emphasise that the decay is \emph{not} an effect of the positive part of $\Re V$ since it may be absent, or even worse, $\Re V$ is allowed to be negative and unbounded at infinity. 

\subsubsection{A sufficient condition to define the operator} 
The first problem that we tackle in our analysis is finding of a Dirichlet realisation of \eqref{operator.intro} with \emph{non-empty resolvent set}. This is not a trivial task as we do not restrict the signs of $\Re V$ and $\Im V$ 
and so the standard sectorial form techniques of \cite[Sec.~VI.2.1]{Kato} are not available. 

A simple example one should have in mind is
\begin{equation}\label{paradigma}
  -\frac {\der^2}{\der x^2} - x^2 + i x^3
  \qquad \mbox{in} \qquad L^2(\R)
  \,,
\end{equation}
for which the numerical range covers the whole complex plane. Due to the latter, even the Kato's theorem for accretive Schr\"odinger operators, based on Kato's distributional inequality \cite[Sec.~VII.2]{Edmunds-Evans}, is not applicable immediately\footnote{Note that, in special self-adjoint settings, however, interesting alternative approaches can be found in the literature. For instance, in~\cite{Kostrykin_2013}, representation theorems for indefinite quadratic forms are established and can be used to define certain self-adjoint
operators possibly unbounded from below.}. Here we can even go beyond operators like \eqref{paradigma} for which the suitable Dirichlet realisation can be actually found by available methods in \cite{AlmogHe15,Boegli-2015}. We allow much wilder behaviour of $V$ in terms of the possible growth at infinity and oscillations. In more detail, we essentially require that (cf.~Assumption~\ref{a1} and Proposition~\ref{prop.a0})
\begin{align}
|\nabla V(x)| + |\nabla \mathbf B (x)| &= o \left ((|V(x)|+|\mathbf B(x)|)^\frac32+1 \right), \label{nabla.V.32}
\\
(\Re V(x))_- & = o \Big(|V(x)|+|\mathbf B(x)|+1 \Big),
\label{ReV.neg}
\end{align}
as $|x| \to \infty$, where $(\Re V)_-$ is the negative part of $\Re V$ 
and $\mathbf B := \der \mathbf{A}$ is the magnetic matrix. 

The condition \eqref{ReV.neg} puts restrictions on the size of $(\Re V)_-$ which in fact represents a ``small'' perturbation of an m-accretive operator \eqref{operator.intro} with $V$ replaced by $(\Re V)_+ + i \, \Im V$. Notice however, that $(\Re V)_-$ can be compensated not only by $\Im V$, but also by the magnetic field. 
In a different context (absence of eigenvalues), 
a certain analogy between the magnetic field and $\Im V$ was observed in~\cite{FKV}.

\subsubsection{About the power $\frac{3}{2}$}
The power $\frac{3}{2}$ in the condition \eqref{nabla.V.32} 
is an improvement comparing to \cite{AlmogHe15,Boegli-2015}
where the power~$1$ is assumed; in these references (where \eqref{paradigma} fits already),
a big-$\mathcal O$ instead of the little-$o$ is used. In the present paper,
we can therefore treat examples like
\begin{equation}\label{paradigma.2}
  -\frac {\der^2}{\der x^2} - e^{x^2} + i e^{x^4}
  \qquad \mbox{in} \qquad L^2(\R)
  \,.
\end{equation}
Moreover, we show in Theorem~\ref{theo.sep} that the operator domain of 
the found realisation of~\eqref{operator.intro} possesses a very convenient separation property, namely
\begin{equation}
\Dom( (-i\nabla+\mathbf{A})^2+V ) = \Dom( (-i\nabla+\mathbf{A})^2) \cap \Dom(V)\,.
\end{equation}
The power $\frac{3}{2}$ in~\eqref{nabla.V.32} 
is not a coincidence as it is known to be optimal (with little-$o$ replaced by a sufficiently small constant in \eqref{nabla.V.32}) with respect to the separation property in the self-adjoint  case \cite{Evans-1978-8,Everitt-1978-79,Brown-1999-124} (see also \cite{Duf83}, \cite{HM88} in the magnetic case).

\subsubsection{Weighted coercivity}
Our approach for proving all the results of this paper is based on the generalised Lax-Milgram-type theorem of Almog and Helffer \cite{AlmogHe15} involving a new idea of \emph{weighted coercivity}, which can be viewed as a generalisation of the $\mathbb T$-coercivity (see for instance \cite[Def.~2.1]{Bonnet-BenDhia-2010-234}). While from the point of view of abstract Lax-Milgram or representation theorems, an optimal ``if and only if'' condition for m-accretivity was found in the recent work \cite[Thm.~4.2]{Elst-2015-269}, the weighted coercivity of Theorem~\ref{theo-minoration} makes such abstract results  directly applicable for~\eqref{operator.intro}. 
Moreover, the present paper reveals a connection between weighted coercivity and exponential decay of eigenfunctions stated 
in Theorem~\ref{theo.main}.

\subsection{Examples of applications}
Besides the independent interest of our results, 
we indicate below two connections to other recent works, 
both when~$|V|$ is confining so that the resolvent of~\eqref{operator.intro} is compact
(see Proposition~\ref{prop.comp-res}).
The first one concerns the completeness of eigensystem of \eqref{operator.intro}, the second one the rates of eigenvalue convergence of domain truncations.

\subsubsection{Eigensystem completeness} The crucial ingredient in a natural proof of the eigensystem completeness is the fundamental result of operator theory
(see, e.g., \cite[Cor.~XI.9.31]{DS2})
combining the $p$-Schatten class property of the resolvent and a control of the resolvent norm on a sufficient number of rays in~$\C$; for operators like \eqref{operator.intro}, this approach was followed in \cite{SK,AlmogHe15}. We indicate how the completeness results can be extended to operators satisfying weaker conditions \eqref{nabla.V.32} only. Our domain separation and the graph norm estimate (cf.~Theorem~\ref{theo.sep}), 
the second resolvent identity and the ideal property of Schatten classes show that the resolvent of~\eqref{operator.intro} is in the $p$-Schatten class ($0 < p <\infty$) if and only if the resolvent of the self-adjoint~\eqref{operator.intro} with~$V$ replaced by~$|V|$ is in the $p$-Schatten class;
to obtain the value of~$p$ depending on~$V$ and~$\mathbf A$, criteria of the type \cite[Thm.~1.3]{AlmogHe15} can be applied. 
To have the control of the resolvent norm on rays in~$\C$, 
we can use the standard bound ($1$ over the distance to the numerical range) 
if~\eqref{operator.intro} is at least accretive and, in the non-accretive case, the perturbation result \cite[Thm.~IV.3.17]{Kato} with viewing $(\Re V)_-$ as a relatively bounded perturbation of an m-accretive operator \eqref{operator.intro} with~$V$ replaced by $(\Re V)_+ + i \; \Im V$ (see \cite[Prop.~2.4 (iv)]{Boegli-2015} for details on such an approach). 

\subsubsection{Domain truncation} It was proved in \cite{Boegli-2015} that eigenvalues of \eqref{operator.intro} on $\R^d$ with $\mathbf A = 0$ and $V$ satisfying (stronger) conditions of the type \eqref{nabla.V.32}--\eqref{ReV.neg}, see \cite[Asm.~II]{Boegli-2015}, can be approximated without pollution by the eigenvalues of \eqref{operator.intro} truncated to a sequence of expanding domains, e.g.~balls, 
and subject to Dirichlet boundary conditions. The rate of convergence for a given eigenvalue or \eqref{operator.intro} on $\R^d$ was estimated by the decay rate of the corresponding eigenfunctions (and generalised eigenfunctions in the case of Jordan blocks) 
at infinity (see~\cite[Thm.~5.2]{Boegli-2015}). Our Agmon-type estimate, cf.~Theorem~\ref{theo.main} and Remark~\ref{rem.Agm}, shows that this convergence is exponential which vastly generalises known facts for complex polynomial potentials 
(see, e.g., \cite{Sibuya-1975,Cappiello-2010-111}).

\subsection{Organisation of the paper}
In Section~\ref{Sec.Results}, we summarise our main results.
The definition of~\eqref{operator.intro} as a closed
densely defined operator together with a convenient characterisation
of the operator domain is performed in Section~\ref{sec.1}.
The spectral properties are established in Section~\ref{sec.3}.
At the end of the paper, we attach Appendix~\ref{sec.A}
with elements of spectral theory related to the present study.

%---------------------%
\section{Main results}\label{Sec.Results}
%---------------------%
%

\subsection{Assumptions}
Let $\O$ be a non-empty open (possibly unbounded) subset of $\R^d$, $d \geq 1$.
Another standing assumption of this paper is that the electromagnetic 
potentials satisfy  
$$
  (V,\mathbf{A})
  \in\mathcal{C}^1(\bar\Omega;\C)\times \mathcal{C}^2(\bar\Omega;\R^d)
  \,.
$$
This smoothness hypothesis is technically convenient,
but it is definitely far from being optimal for 
the applicability of our techniques
and the validity of the obtained results.  
We write
$
  V=V_{1}+iV_{2}
$
where~$V_{1}$ and~$V_{2}$ are real-valued.
Associated with the vector potential $\mathbf{A}$, 
we consider the magnetic (skew-symmetric) matrix
\begin{equation}\label{tensor}
  \mathbf{B} = (B_{jk})_{j,k=1}^d 
  \,, \qquad 
  B_{jk} := \partial_j A_k - \partial_k A_j = i [P_j,P_k]
  \,,
\end{equation}
where $P_\ell := -i\partial_\ell + A_\ell$.

As in \cite{AlmogHe15}, let us introduce functions
\begin{equation}\label{weights}
  \Phi := \frac {V_{2}}{\mBV} 
  \qquad \mbox{and} \qquad
  \Psi := \frac {\mathbf{B}}{\mBV}
  \,,
\end{equation}
where
\[
  \mBV := \sqrt{1 + \abs{\mathbf{B}}^2 + \abs V^2 }
  \,.
\]
Here $|V(x)|$ denotes the usual norm of a complex number, 
while we use
\[
  \abs{\mathbf{B}(x)} := \sqrt{\sum_{j,k=1}^d B_{jk}(x)^2}
  \,, \qquad
  \abs{\nabla \mathbf{B}(x)}
  := \sqrt{\sum_{j,k = 1}^d \abs{\nabla B_{jk} (x)}^2}
  \,,
\]
where $\abs{\nabla B_{jk}(x)}$ is now the usual Euclidean norm
of a vector in~$\R^d$. 
Finally, given a real-valued function~$a$,
we adopt the standard notation
$a_{\pm}:=\max(\pm a,0)$.

With these notations, the main hypothesis of this paper reads:
\begin{assumption}\label{a1}
There exist constants $\gamma_{1}>0$ and $\gamma_{2}\in\R$ such that
\begin{equation}\label{eq.a0}
 \frac {V_{2}^2 +\frac{1}{12d} \abs{\mathbf{B}}^2}{\mBV} + V_{1} -  9 \left( \abs {\nabla \Phi}^2 + \abs{\nabla \Psi}^2 \right) \geq \gamma_{1}|V|-\gamma_{2}\,.
\end{equation}
\end{assumption}

On the left-hand side in~\eqref{eq.a0}, 
the first term is non-negative, 
the last bracket gives a non-positive contribution
and~$V_1$ has no sign \emph{a priori}. 
If~$V_1$ is bounded from below, 
then we only have to control the last term to obtain the required inequality. 
The point is that~$V_2$ or~$\mathbf{B}$ can be used to control 
the non-positive contribution of~$V_{1}$.

Assumption~\ref{a1} is easily checked to hold for~\eqref{paradigma}.
A sufficient condition for the validity of Assumption~\ref{a1}
is contained in the following proposition.

\begin{proposition}\label{prop.a0}
We assume
\begin{align} 
  \abs{\nabla V(x)} + \abs{\nabla \mathbf{B}(x)} 
  &=
  \mathop{o} \big( \mBV^{\frac{3}{2}}(x) \big)
  \,,
  \label{hyp-nabla}
  \\
  \left(V_{1}\right)_{-}(x)
  &= \mathop{o}\big(\mBV(x)\big)
  \,,
  \label{eq.Im-dom}
\end{align}
as $|x|\to+\infty$. 
Then Assumption~\ref{a1} is satisfied.
\end{proposition}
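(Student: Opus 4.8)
The plan is to verify the inequality \eqref{eq.a0} pointwise for large $|x|$ using the two little-$o$ hypotheses, and then to absorb the bounded region into the constant $\gamma_2$. First I would estimate the negative terms. The key observation is that the hypothesis \eqref{hyp-nabla} directly controls $\nabla\Phi$ and $\nabla\Psi$: since $\Phi = V_2/\mBV$ and $\Psi = \mathbf{B}/\mBV$, a routine computation of the quotient derivative gives
\[
  |\nabla\Phi| \leq \frac{|\nabla V|}{\mBV} + \frac{|V|\,|\nabla\mBV|}{\mBV^2},
  \qquad
  |\nabla\Psi| \leq \frac{|\nabla\mathbf{B}|}{\mBV} + \frac{|\mathbf{B}|\,|\nabla\mBV|}{\mBV^2},
\]
and since $|\nabla\mBV| \lesssim (|\nabla V|+|\nabla\mathbf{B}|)$ (differentiating the square root, using $|V|,|\mathbf{B}|\le\mBV$), both $|\nabla\Phi|$ and $|\nabla\Psi|$ are $\mathop{o}(\mBV^{1/2})$, hence $|\nabla\Phi|^2 + |\nabla\Psi|^2 = \mathop{o}(\mBV)$. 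So the troublesome bracket $-9(|\nabla\Phi|^2+|\nabla\Psi|^2)$ is $\mathop{o}(\mBV)$, and likewise $(V_1)_- = \mathop{o}(\mBV)$ by \eqref{eq.Im-dom}.

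Next I would show the first (good) term dominates $\mBV$ up to the above errors. The point is to bound from below
\[
  \frac{V_2^2 + \frac{1}{12d}|\mathbf{B}|^2}{\mBV}
\]
in terms of $\mBV$ itself. Write $M = \mBV$, so $M^2 = 1 + |\mathbf{B}|^2 + |V|^2 = 1 + |\mathbf{B}|^2 + V_1^2 + V_2^2$. The numerator is $V_2^2 + \tfrac{1}{12d}|\mathbf{B}|^2 \geq \tfrac{1}{12d}(V_2^2 + |\mathbf{B}|^2) = \tfrac{1}{12d}(M^2 - 1 - V_1^2)$. Thus the good term is at least $\tfrac{1}{12d}(M^2 - 1 - V_1^2)/M = \tfrac{1}{12d}(M - (1+V_1^2)/M)$. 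Combining with $V_1 \geq -(V_1)_- = -\mathop{o}(M)$, the left-hand side of \eqref{eq.a0} is at least
\[
  \frac{1}{12d}M - \frac{1+V_1^2}{12d\,M} - \mathop{o}(M) - \mathop{o}(M).
\]
The remaining obstacle is the term $-V_1^2/(12d\,M)$, which is not obviously lower order — if $V_1$ is large and positive it could be comparable to $M$. I would handle this by splitting into cases at each point $x$: either $V_1(x) \geq 0$, in which case $V_1 \geq 0$ can simply be dropped (it only helps) and we do not need the numerator bound at all beyond $\tfrac{1}{12d}|\mathbf{B}|^2/M \geq 0$, giving left-hand side $\geq V_1 - \mathop{o}(M) \geq -\mathop{o}(M) \geq \tfrac12\gamma_1|V| - \gamma_2$ once we also use $|V|\le M$ and $V_1\ge 0$ forces $|V|^2 = V_1^2+V_2^2$ — actually more carefully, when $V_1\ge0$ one keeps $V_1$ and uses $V_2^2/M$ to absorb $|V|$; or $V_1(x) < 0$, in which case $V_1^2 = (V_1)_-^2 = \mathop{o}(M^2)$, so $V_1^2/M = \mathop{o}(M)$ and the bad term is reabsorbed into the errors.

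Putting the cases together, for $|x|$ large enough the left-hand side of \eqref{eq.a0} is at least $\tfrac{1}{24d}\mBV(x) - C$ for some constant, and since $|V(x)| \leq \mBV(x)$ this gives $\geq \tfrac{1}{24d}|V(x)| - C$, which is \eqref{eq.a0} with $\gamma_1 = \tfrac{1}{24d}$ on the exterior region. On the complementary bounded region $\{|x|\le R\}$, all functions involved are continuous (by the $\mathcal{C}^1\times\mathcal{C}^2$ smoothness hypothesis) hence bounded, so the inequality holds there after enlarging $\gamma_2$. The main delicate point, as indicated, is the case distinction on the sign of $V_1$ to neutralise the $V_1^2/\mBV$ term: when $V_1>0$ one must be careful to retain enough of the good numerator (the $V_2^2/\mBV$ piece) to still dominate $\gamma_1|V|$, since $|\mathbf{B}|$ alone need not be large; I would make this precise by noting that if $V_1\ge 0$ then $V_1 + V_2^2/\mBV \ge V_1 + V_2^2/(|V|(1+|V|^{-1}\sqrt{1+|\mathbf{B}|^2}))$ and a short estimate shows $V_1 + V_2^2/\mBV \gtrsim |V|$ for large $|V|$, while the whole expression is anyway bounded below on bounded sets.
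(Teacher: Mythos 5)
Your overall strategy matches the paper's: derive $|\nabla\Phi|^2+|\nabla\Psi|^2=o(\mBV)$ from \eqref{hyp-nabla}, lower-bound the numerator $V_2^2+\tfrac{1}{12d}|\mathbf{B}|^2$ so that $\mBV^2=1+|\mathbf{B}|^2+|V|^2$ appears (at the cost of a $-V_1^2$ term), and then control the resulting $V_1-\tfrac{V_1^2}{c\,\mBV}$ using the sign of $V_1$ together with \eqref{eq.Im-dom}. You correctly identify the crux: the $-V_1^2/(12d\,\mBV)$ term is not automatically $o(\mBV)$ when $V_1$ is large and positive. But the way you resolve the $V_1\ge 0$ case has a genuine error: the claim ``$V_1+V_2^2/\mBV\gtrsim|V|$ for large $|V|$'' is false. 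Take $V_1=0$, $V_2=t$, $|\mathbf{B}|=t^{10}$; then $|V|=t$, $\mBV\sim t^{10}$, and $V_1+V_2^2/\mBV\sim t^{-8}\to 0$, which does not dominate $|V|$. (In that situation the $|\mathbf{B}|^2/\mBV$ piece is what saves the inequality, and you have dropped it from the estimate by this point.)

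The fix is simpler than what you attempt. You do not need $V_1+V_2^2/\mBV$ to dominate $|V|$ in the $V_1\ge 0$ case; you only need the pair $V_1-\tfrac{V_1^2}{12d\,\mBV}$ to be nonnegative there, so that it can be discarded. Since $0\le V_1\le|V|\le\mBV$, you have $\tfrac{V_1}{12d\,\mBV}\le\tfrac{1}{12d}<1$, hence $V_1-\tfrac{V_1^2}{12d\,\mBV}=V_1\bigl(1-\tfrac{V_1}{12d\,\mBV}\bigr)\ge 0$. Combined with the $V_1<0$ case (where $V_1^2=(V_1)_-^2=o(\mBV^2)$ gives $V_1-\tfrac{V_1^2}{12d\,\mBV}=-o(\mBV)$), one obtains in all cases the lower bound $V_1-\tfrac{V_1^2}{12d\,\mBV}\ge-(V_1)_--\tfrac{(V_1)_-^2}{12d\,\mBV}=-o(\mBV)$, so the left-hand side of \eqref{eq.a0} is $\ge\tfrac{1}{12d}\mBV-\tfrac{1}{12d}-o(\mBV)\ge\tfrac{1}{24d}|V|-\gamma_2$ for $|x|$ large, and the bounded region is absorbed into $\gamma_2$ by continuity. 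This is exactly the paper's argument: it keeps $V_1-\tfrac{V_1^2}{\mBV}$ together and observes that this combination always bounds below by $-(V_1)_--(V_1)_-^2/\mBV=-o(\mBV)$, so no auxiliary claim about $V_1+V_2^2/\mBV$ versus $|V|$ is needed. Also note the earlier passage of your write-up already replaced $V_1$ by $-(V_1)_-$ ``Combining with $V_1\ge-(V_1)_-$\dots''; once you have thrown away the positive part of $V_1$ you cannot then ``keep $V_1$'' to cancel $V_1^2/(12d\,\mBV)$, so the two halves of the argument as written are inconsistent and need to be merged into the single observation above.
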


\subsection{Definition of the operator}
First we introduce the usual magnetic Sobolev space
\[
  H^1_{\mathbf{A}}(\O)
  :=\{u\in L^2(\O)\st (-i\nabla+\mathbf{A})u\in L^2(\O)\}
  \,,
\]
equipped with the norm
\[
  \|u\|_{H^1_{\mathbf{A}}(\O)}
  := \sqrt{\|u\|^2+\|(-i\nabla+\mathbf{A})u\|^2}
  \,.
\]
Here~$\|\cdot\|$ denotes the norm of $L^2(\O)$
and the associated inner product will be denoted by $\innp{\cdot}{\cdot}$.
We also introduce the subspace $H^1_{\mathbf{A},0}(\O)$ 
defined as the closure of $\mathcal{C}^\infty_{0}(\O)$ 
for the norm $\|\cdot\|_{H^1_{\mathbf{A}}(\O)}$.
Then we can introduce our variational space as 
\[
  \mathscr{V} := 
  \left\{ u \in H_{\mathbf{A},0}^1(\O) \st  
  \abs{V}^{\frac 12} u \in L^2(\O) \right\}
  \,,
\]
equipped with the norm
\[
  \nr{u}_{\mathscr{V}} 
  := \sqrt{ \nr{u}_{H^1_{\mathbf{A}}(\O)}^2 
  + \int_\O \abs V \abs u^2 \, \der x }
  \,,
\]
with respect to which~$\mathscr{V}$ is complete.

We introduce a sesquilinear form
\[
  Q(u,v) := 
  \innp{(-i\nabla + \mathbf{A} ) u}{{(-i\nabla + \mathbf{A}) v}}
  + \int_\O V u \bar v \, \der x
  \,, \qquad
  \Dom(Q) := \mathscr{V}
  \,.
\]
For $u,v \in \mathcal{C}^\infty_{0}(\O)$,
a dense subspace of~$\mathscr{V}$,
we have 
\[
  Q(u,v) = \innp{(-i\nabla+\mathbf{A})^2 u+Vu}{v}\,,
\]
so~$Q$ is the form naturally associated with~\eqref{operator.intro}. 
If~$V$ were such that~$Q$ was sectorial, then~$Q$ would be closed  
and it would give rise to an m-sectorial operator by 
Kato's representation theorem \cite[Thm.~VI.2.1]{Kato}.
In our general setting (where the numerical range of~$Q$
is allowed to be the whole complex plane), however, 
there is no general representation theorem 
and even the notion of closedness for forms is not standard.
Anyway, we are still allowed to introduce \emph{an} operator~$\mathscr{L}$
by the Riesz theorem
\begin{equation}\label{operator}
  \forall u \in \mathsf{Dom}(\mathscr{L}),
  \quad \forall v \in \mathscr{V}, \qquad 
  Q(u,v) =: \innp{\mathscr{L} u}{v}
  \,,
\end{equation}
where
\begin{equation}\label{domain}
  \mathsf{Dom}(\mathscr{L}) := 
  \left\{ v \in \mathscr{V} \st  u \mapsto Q (u,v) 
  \text{ is continuous on $\mathscr{V}$ for the norm of $L^2(\O)$} \right\}
  \,.
\end{equation}

The following theorem shows that such a defined operator~$\mathscr{L}$
shares all the nice properties of operators introduced by the standard
representation theorem.
The proof is based on the new abstract representation theorem of Almog and Helffer (see \cite[Thm.~2.2]{AlmogHe15}, reproduced below as Theorem~\ref{th-lax-milgram}).

\begin{theorem}\label{prop.def-op}
Suppose Assumption~\ref{a1}.
The following properties hold:
\begin{enumerate}
\item[\emph{(i)}] 
$\mathsf{Dom}(\mathscr{L})$ is dense in $\mathsf{H}$,
\item[\emph{(ii)}] 
$\mathscr{L}$ is closed,
\item[\emph{(iii)}]  
the resolvent set of $\mathscr{L}$ is not empty.
\end{enumerate}
\end{theorem}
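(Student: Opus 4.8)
The plan is to deduce Theorem~\ref{prop.def-op} from the abstract representation theorem of Almog and Helffer, which is recalled in the excerpt as Theorem~\ref{th-lax-milgram}. That theorem takes as input a Hilbert space $\mathsf{H}$, a densely and continuously embedded form domain $\mathscr{V}$, a bounded sesquilinear form $Q$ on $\mathscr{V}$, and a bounded invertible operator on $\mathscr{V}$ (the ``weight'') witnessing a coercivity inequality of the type $\Re Q(u, \mathcal{W}u) \geq c\|u\|_{\mathscr{V}}^2$ (up to a shift), and outputs exactly the three conclusions (i)--(iii) above for the operator defined by \eqref{operator}--\eqref{domain}. So the real content of the proof is to verify the hypotheses of that abstract theorem in our concrete setting, the crucial one being the weighted coercivity estimate, which is precisely what Theorem~\ref{theo-minoration} (referenced in the excerpt as the main coercivity statement, provable under Assumption~\ref{a1}) is designed to supply.

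Concretely, I would proceed in the following steps. First, record that $\mathscr{V}$ is a Hilbert space continuously and densely embedded in $\mathsf{H} = L^2(\Omega)$: completeness of $\mathscr{V}$ is asserted in the excerpt, density follows since $\mathcal{C}^\infty_0(\Omega)$ is dense in $\mathscr{V}$ by construction and dense in $L^2(\Omega)$, and the embedding is continuous because $\|u\| \leq \|u\|_{\mathscr{V}}$ trivially. Second, check that $Q$ is bounded on $\mathscr{V}\times\mathscr{V}$: this is immediate from Cauchy--Schwarz, $|Q(u,v)| \leq \|(-i\nabla+\mathbf{A})u\|\,\|(-i\nabla+\mathbf{A})v\| + \int_\Omega |V|\,|u|\,|v|\,\der x \leq \|u\|_{\mathscr{V}}\|v\|_{\mathscr{V}}$. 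Third — the heart of the matter — invoke the weighted coercivity of Theorem~\ref{theo-minoration}: under Assumption~\ref{a1} there exist a weight (built from the functions $\Phi$ and $\Psi$ of \eqref{weights}, typically of the form $\mathcal{W} = 1 + $ something bounded involving $e^{\pm}$ of bounded phases, so that $\mathcal{W}$ is a bounded invertible operator on $\mathscr{V}$) and constants $c>0$, $C\in\R$ with $\Re Q(u,\mathcal{W}u) \geq c\|u\|_{\mathscr{V}}^2 - C\|u\|^2$ for all $u\in\mathscr{V}$; and the same inequality holds for the adjoint form $Q^*(u,v) := \overline{Q(v,u)}$, with a possibly different but structurally identical weight, which is what Theorem~\ref{th-lax-milgram} needs to also get density of the domain. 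Fourth, feed all of this into Theorem~\ref{th-lax-milgram} and read off (i), (ii), (iii); the resolvent set being non-empty comes out because the shifted form $Q + \lambda$ becomes genuinely coercive for $\Re\lambda$ large, so $\mathscr{L} + \lambda$ is boundedly invertible.

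The main obstacle is entirely concentrated in the third step, namely establishing the weighted coercivity inequality from Assumption~\ref{a1}. Everything else is bookkeeping. That estimate requires expanding $\Re Q(u, \mathcal{W}u)$, where $\mathcal{W}$ involves multiplication by functions of $\Phi$ and $\Psi$; commuting the magnetic gradient $P = -i\nabla + \mathbf{A}$ past these multipliers produces terms containing $\nabla\Phi$ and $\nabla\Psi$ (hence the $\abs{\nabla\Phi}^2 + \abs{\nabla\Psi}^2$ in \eqref{eq.a0}), the magnetic field $\mathbf{B}$ enters through the commutator $i[P_j,P_k] = B_{jk}$ (hence the $\abs{\mathbf{B}}^2$ term and the precise constant $\frac{1}{12d}$), and $V_2$ contributes via the imaginary part of the potential interacting with the imaginary part of $\mathcal{W}$ (hence the $V_2^2/\mBV$ term), while $V_1$ survives as the genuine real part. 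The delicate points are: choosing $\mathcal{W}$ so that it is a bona fide bounded invertible operator on $\mathscr{V}$ (which forces $\Phi$ and $\Psi$ to appear only through bounded functions of them, e.g.\ via $\arctan$ or truncation, since $\Phi, \Psi$ themselves need not be bounded but $\mathscr{V}$-boundedness of $\mathcal{W}$ must be preserved), controlling the cross terms by Cauchy--Schwarz with carefully tuned weights so the negative contributions are absorbed into the $\gamma_1|V|$ on the right of \eqref{eq.a0}, and keeping track of all constants so that the bracket $9(\abs{\nabla\Phi}^2 + \abs{\nabla\Psi}^2)$ and the factor $\frac{1}{12d}$ in Assumption~\ref{a1} are exactly what the computation demands. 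Since that computation is precisely the statement of Theorem~\ref{theo-minoration}, which I am allowed to assume, the proof of Theorem~\ref{prop.def-op} itself is short: verify the abstract hypotheses and apply Theorem~\ref{th-lax-milgram}.
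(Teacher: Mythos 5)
Your proposal follows the paper's own route: apply Theorem~\ref{theo-minoration} with $W = 0$ to obtain the coercivity inequality \eqref{eq.coer-Qmu}--\eqref{eq.coer-Qmu'}, take a real shift $\mu < -\gamma_2$ so that the right-hand side controls $\nr{u}_{\mathscr{V}}^2$, and feed the result into Theorem~\ref{th-lax-milgram}. One minor imprecision in your description of the weight: the operator $\Phi_2$ in the abstract theorem is simply multiplication by $\Phi = V_2/\mBV$, which is bounded by $1$ in modulus by construction (so no $\arctan$ or truncation is needed, and Theorem~\ref{th-lax-milgram0} does not require invertibility), the coercivity is extracted from the combination $\Re Q_\mu(u,u) + \Im Q_\mu(u, \Phi u)$ rather than from a single $\Re Q_\mu(u, \mathcal{W} u)$, and the function $\Psi = \mathbf{B}/\mBV$ enters only the right-hand side bound (via Lemma~\ref{lem-BmBV}), not the weight operator.
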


Furthermore, we have the following description of the domain of~$\mathscr{L}$.
\begin{theorem}\label{theo.sep}
Let~\eqref{hyp-nabla} and~\eqref{eq.Im-dom} hold.
Then we have
\begin{equation*}
  \mathsf{Dom}(\mathscr L) 
  = \left\{ 
  u \in \mathscr V \, : \, 
  (-i \nabla + \mathbf{A})^2 u \in L^2(\O)
  \ \land \
  V u \in L^2(\O)
  \right\}
  \,.
\end{equation*}
Moreover, for all $\delta>0$, 
there exists $C_\delta>0$ such that, 
for all $u \in \mathsf{Dom}(\mathscr L)$, 
\begin{equation}\label{L.graph.nom}
  \|\mathscr L u \|^2 
  \geq (1-\delta) \left(
  \|(-i \nabla + \mathbf{A})^2 u \|^2  + \|V u \|^2
  \right) 
  - C_\delta \|u\|^2.
\end{equation}
\end{theorem}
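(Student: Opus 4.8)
The plan is to establish the two inclusions separately, with the nontrivial one driven by the quantitative bound~\eqref{L.graph.nom}, which will be proved first on the core $\mathcal C_0^\infty(\O)$ and then extended by density. The inclusion $\supseteq$ is essentially definitional: if $u\in\mathscr V$ with $(-i\nabla+\mathbf A)^2u\in L^2(\O)$ and $Vu\in L^2(\O)$, then for $v\in\mathcal C_0^\infty(\O)$ one integrates by parts to write $Q(u,v)=\innp{(-i\nabla+\mathbf A)^2u+Vu}{v}$, and since $\mathcal C_0^\infty(\O)$ is dense in $\mathscr V$ and $(-i\nabla+\mathbf A)^2u+Vu\in L^2(\O)$, the map $v\mapsto Q(u,v)$ is $L^2$-continuous on $\mathscr V$; hence $u\in\Dom(\mathscr L)$. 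The reverse inclusion $\subseteq$ is the real content: I want to show that $u\in\Dom(\mathscr L)$ forces $(-i\nabla+\mathbf A)^2u$ and $Vu$ to be individually in $L^2$, and this is exactly what~\eqref{L.graph.nom} buys once we know $\mathscr L u\in L^2$.

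So the heart of the argument is the a priori estimate~\eqref{L.graph.nom} for $u\in\mathcal C_0^\infty(\O)$. I would start from $\|\mathscr L u\|^2=\|(-i\nabla+\mathbf A)^2u+Vu\|^2$ and expand:
\begin{equation*}
  \|\mathscr L u\|^2
  = \|(-i\nabla+\mathbf A)^2 u\|^2 + \|Vu\|^2 + 2\Re\innp{(-i\nabla+\mathbf A)^2u}{Vu}.
\end{equation*}
The cross term is the dangerous one; the goal is to bound it below by $-\delta\big(\|(-i\nabla+\mathbf A)^2u\|^2+\|Vu\|^2\big)-C_\delta\|u\|^2$. One integrates by parts in $\innp{(-i\nabla+\mathbf A)^2u}{Vu}$ to move one magnetic derivative onto $Vu$, producing a commutator term involving $\nabla V$ (and $\nabla\mathbf A$, hence $\mathbf B$ and $\nabla\mathbf B$ after a second integration by parts) together with a manifestly good term $\int V_1\,|(-i\nabla+\mathbf A)u|^2$ minus a controlled piece and an oscillatory piece $\int V_2\,\Im(\cdots)$. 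The oscillatory and commutator contributions are estimated using the hypothesis~\eqref{hyp-nabla}: $|\nabla V|+|\nabla\mathbf B|=o(\mBV^{3/2})$ lets one absorb $\|\,|\nabla V|^{1/2}(-i\nabla+\mathbf A)u\|^2$-type quantities into $\eps\|Vu\|^2+\eps\|(-i\nabla+\mathbf A)^2u\|^2+C_\eps\|u\|^2$ via Young's inequality and the weighted coercivity already available from Theorem~\ref{theo-minoration} (which controls $\|\,|V|^{1/2}u\|$ and $\|(-i\nabla+\mathbf A)u\|$ by $\|\mathscr L u\|$ and $\|u\|$). The role of the $\frac32$ power is precisely that after distributing weights $\mBV$ between the two factors of the cross term, the leftover $|\nabla V|/\mBV^{3/2}$ is $o(1)$ and can be made smaller than any $\eps$ outside a compact set, with the compact-set remainder absorbed into $C_\delta\|u\|^2$ by elliptic regularity there.

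Once~\eqref{L.graph.nom} holds on $\mathcal C_0^\infty(\O)$, I extend it to all of $\Dom(\mathscr L)$ by a regularisation/density argument: given $u\in\Dom(\mathscr L)$, approximate by $u_n\in\mathcal C_0^\infty(\O)$ in the graph norm of $\mathscr L$ (using that $\mathscr L$ is closed by Theorem~\ref{prop.def-op}(ii) and that $\mathcal C_0^\infty(\O)$ is a core — this coreness itself may need a short separate verification via cutoff and mollification adapted to the variational space $\mathscr V$, respecting the Dirichlet condition). Then $\|\mathscr L u_n\|\to\|\mathscr L u\|$ while Fatou applied to the nonnegative right-hand side gives $\|(-i\nabla+\mathbf A)^2 u\|^2+\|Vu\|^2<\infty$, i.e.\ $u$ lies in the set on the right, and~\eqref{L.graph.nom} passes to the limit.

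The main obstacle I anticipate is the careful bookkeeping of the integration-by-parts in the cross term in the magnetic, non-self-adjoint setting: the commutator $[(-i\nabla+\mathbf A)^2,V]$ generates first-order magnetic terms that must be re-expressed and weighted so that every piece is either (i) the good sign-definite term coming from $V_1$ plus the first term of~\eqref{eq.a0}, or (ii) genuinely lower order after invoking~\eqref{hyp-nabla}, and doing this while keeping the $(1-\delta)$ constant sharp — rather than just some constant less than $1$ — is delicate. A secondary technical point is justifying all these manipulations (and the coreness of $\mathcal C_0^\infty(\O)$) without extra regularity on $u$, which is why the proof is organised to work first on smooth compactly supported functions and only then to pass to the limit.
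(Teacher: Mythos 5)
Your overall architecture matches the paper's: expand $\|\mathscr L u\|^2=\|(-i\nabla+\mathbf A)^2u\|^2+\|Vu\|^2+2\Re\innp{(-i\nabla+\mathbf A)^2u}{Vu}$ on a core, integrate the cross term by parts, use~\eqref{hyp-nabla} to absorb $|\nabla V|$, and extend by density. The inclusion $\supseteq$ you handle correctly. But there are two concrete gaps.

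First, and this is the crux: the step where you ``absorb $\|\,|\nabla V|^{1/2}(-i\nabla+\mathbf A)u\|^2$-type quantities \dots\ via Young's inequality and the weighted coercivity already available from Theorem~\ref{theo-minoration}'' does not close. After the integration by parts, the dangerous term is $2\langle|(-i\nabla+\mathbf A)u|,\,|\nabla V||u|\rangle$, and using~\eqref{hyp-nabla} to write $|\nabla V|\leq\eps\,\mBV^{3/2}+C_\eps$ and splitting by Young, you are left needing a bound on $\|\mBV^{1/2}(-i\nabla+\mathbf A)u\|^2$ and on $\|\mBV\,u\|^2$ (equivalently $\||\mathbf B|u\|^2$) in terms of $\|(-i\nabla+\mathbf A)^2u\|^2+\|Vu\|^2+\|u\|^2$. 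Theorem~\ref{theo-minoration} gives you nothing of this kind — it controls $\||V|^{1/2}u\|$ and $\|(-i\nabla+\mathbf A)u\|$, which is one power of $\mBV$ short on both counts. The missing ingredient is precisely the paper's Proposition~\ref{lem.B^2} (together with Lemma~\ref{lem.B^2-0} and Lemma~\ref{lem.nabla.A}), which is a separate, non-trivial argument with its own integration by parts and its own use of~\eqref{hyp-nabla}; without it your $\eps$-absorption has nowhere to go. Note also that you do not need ``elliptic regularity'' for the compact-set remainder — it is simply that the weights are bounded there — and the cross term contains no residual $\Im$-part $\int V_2\,\Im(\cdots)$ since $\Re\langle(-i\nabla+\mathbf A)u, V(-i\nabla+\mathbf A)u\rangle=\int V_1|(-i\nabla+\mathbf A)u|^2$.

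Second, you take $\mathcal C_0^\infty(\O)$ as the core. The paper deliberately avoids this and instead proves (Lemma~\ref{lem.density}) that $\mathcal D=\{u\in\Dom(\mathscr L):\supp u\text{ compact in }\overline\O\}$ is a core, obtained by a radial cutoff alone. Showing that $\mathcal C_0^\infty(\O)$ is a core of $\mathscr L$ on an arbitrary open $\O$ would additionally require an interior mollification compatible with the graph norm and the Dirichlet condition, which is exactly the kind of delicacy you flag but do not resolve; and it is unnecessary, since elements of $\mathcal D$ are already in $H^2_{\mathrm{loc}}(\overline\O)$ with $Vu\in\mathscr V$, which is all the regularity the integration by parts needs. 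You should either prove coreness of $\mathcal C_0^\infty(\O)$ or, better, replace it by $\mathcal D$.
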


\subsection{Spectral properties}
The reader may wish to consult Appendix~\ref{sec.A}, 
where we recall basic definitions related to the spectrum
and Fredholm properties.

First of all, we give a sufficient condition for~$\mathscr{L}$
to have a purely discrete spectrum.
\begin{proposition}\label{prop.comp-res}
Suppose Assumption~\ref{a1}.
If 
\begin{equation}\label{confine}
  \displaystyle{\lim_{|x|\to+\infty}|V(x)|=+\infty}
  \,,
\end{equation}
then~$\mathscr{L}$ is an operator with compact resolvent.
\end{proposition}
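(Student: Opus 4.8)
The plan is to show that, for some $\lambda$ in the resolvent set of $\mathscr L$ (which is non-empty by Theorem~\ref{prop.def-op}), the resolvent $(\mathscr L-\lambda)^{-1}$ factors as a bounded map $L^2(\O)\to\mathscr V$ followed by a \emph{compact} embedding $\mathscr V\hookrightarrow L^2(\O)$. Boundedness of the first map is a soft consequence of the closed graph theorem: by construction $\mathsf{Dom}(\mathscr L)\subseteq\mathscr V$, the space $\mathscr V$ is complete and continuously embedded in $L^2(\O)$, and $\mathscr L$ is closed, so $(\mathscr L-\lambda)^{-1}$ is everywhere defined and bounded on $L^2(\O)$; hence the everywhere-defined operator $(\mathscr L-\lambda)^{-1}:L^2(\O)\to\mathscr V$ has closed graph and is therefore bounded. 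Thus everything reduces to proving that $\mathscr V\hookrightarrow L^2(\O)$ is compact, and this is where the confinement hypothesis~\eqref{confine} enters.

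For the compactness of $\mathscr V\hookrightarrow L^2(\O)$ I would take a sequence $(u_n)$ bounded in $\mathscr V$, say $\nr{u_n}_{\mathscr V}\leq M$, and combine a local compactness statement with a uniform control of the mass at infinity. For the local part, fix $R>0$ and a cut-off $\chi_R\in\mathcal C^\infty_0(\R^d)$ with $0\leq\chi_R\leq1$, $\chi_R\equiv1$ on the ball $B_R$ and $\supp\chi_R\subset B_{R+1}$. Since $\mathbf A\in\mathcal C^2(\bar\O)$ is bounded on the compact set $\bar\O\cap\bar B_{R+1}$, the magnetic $H^1$-norm is equivalent to the usual one on functions supported there; using also $u_n\in H^1_{\mathbf A,0}(\O)$, the zero extension of $\chi_R u_n$ belongs to $H^1(\R^d)$, is supported in $\bar B_{R+1}$, and satisfies $\nr{\chi_R u_n}_{H^1(\R^d)}\leq C_R M$ (the product rule only costs the bounded factors $\chi_R$ and $\nabla\chi_R$). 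By the Rellich--Kondrachov theorem, $(\chi_R u_n)_n$ is relatively compact in $L^2(\R^d)$; a diagonal extraction over $R\in\N$ then yields a subsequence $(u_{n_j})$ such that $(\chi_R u_{n_j})_j$ converges in $L^2$ for every $R\in\N$. (Alternatively one may first reduce to $\abs{u_n}$ via the diamagnetic inequality, but the cut-off keeps the phase and is just as short.)

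It remains to handle the tails, which is where~\eqref{confine} is used. Given $\eps>0$, the hypothesis provides $R\in\N$ such that $\abs{V(x)}\geq\eps^{-1}$ for all $x\in\O$ with $\abs x\geq R$; consequently, for every $n$,
\[
  \int_{\O\cap\{\abs x\geq R\}}\abs{u_n}^2\,\der x
  \;\leq\;\eps\int_{\O}\abs V\,\abs{u_n}^2\,\der x
  \;\leq\;\eps\,M^2,
\]
and since $1-\chi_R$ vanishes on $B_R$ this gives $\nr{(1-\chi_R)u_n}\leq\sqrt\eps\,M$ for all $n$. Writing $u_{n_j}-u_{n_l}=\chi_R(u_{n_j}-u_{n_l})+(1-\chi_R)u_{n_j}-(1-\chi_R)u_{n_l}$ and letting $j,l\to\infty$ along the extracted subsequence, the first term tends to $0$ in $L^2$ while the other two are each bounded by $\sqrt\eps\,M$; as $\eps>0$ is arbitrary, $(u_{n_j})$ is Cauchy, hence convergent, in $L^2(\O)$. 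This proves that $\mathscr V\hookrightarrow L^2(\O)$ is compact and, combined with the factorisation above, that $(\mathscr L-\lambda)^{-1}$ is compact.

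I expect the only genuinely delicate point to be the local step for a general open set $\O$ with a possibly unbounded magnetic potential: one must check that $\chi_R u_n$ (extended by zero) really lands in $H^1(\R^d)$ with a controlled norm, which is guaranteed precisely because $\mathbf A$ is continuous up to the boundary and the cut-off localises to a compact region; everything else is the standard tightness-plus-Rellich argument.
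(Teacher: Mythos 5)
Your proof is correct, but it takes a genuinely different (and arguably cleaner) route from the paper's. The paper shows that $\Dom(\mathscr L)$, equipped with the graph norm, is compactly embedded in $L^2(\O)$: it uses the weighted coercivity estimate~\eqref{eq.coer-Qmu'} with $\mu=0$ together with Cauchy--Schwarz to obtain the tail bound $\int_\O(\gamma_1|V|-\gamma_2)|u|^2\,\der x\leq\|u\|^2_{\mathscr L}$, and combines this with $\Dom(\mathscr L)\subset H^2_{\mathrm{loc}}(\O)$ (elliptic regularity) via the Riesz--Fr\'echet--Kolmogorov criterion. You instead factor the resolvent through the form domain $\mathscr V$ and prove the compactness of the embedding $\mathscr V\hookrightarrow L^2(\O)$ directly: the tail bound is \emph{immediate} from the definition of the $\mathscr V$-norm (which already contains $\int_\O|V||u|^2$), so there is no need to invoke the coercivity estimate at all; the local step goes through cut-offs, boundedness of $\mathbf A$ on compacta of $\bar\O$, zero extension, and Rellich--Kondrachov; and the closed graph theorem makes boundedness of $(\mathscr L-\lambda)^{-1}:L^2\to\mathscr V$ a soft consequence of $\Dom(\mathscr L)\subseteq\mathscr V$ and the closedness of $\mathscr L$. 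Both proofs rest on the same two pillars --- local Rellich compactness plus uniform smallness of the tails driven by $|V|\to\infty$ --- but yours operates at the form level rather than the operator level, which makes the tail estimate tautological and dispenses with $H^2_{\mathrm{loc}}$ regularity; the paper's version has the modest advantage of staying inside the machinery (weighted coercivity) already developed. One small but correct subtlety you handled: the zero extension of $\chi_R u_n$ lands in $H^1(\R^d)$ with a bound controlled by the $\mathscr V$-norm, because the operator $v\mapsto\chi_R v$ is bounded from $(\mathcal C^\infty_0(\O),\|\cdot\|_{H^1_{\mathbf A}})$ to $H^1(\R^d)$ (using $\nabla v=i[(-i\nabla+\mathbf A)v-\mathbf A v]$ and boundedness of $\mathbf A$ on $\bar\O\cap\bar B_{R+1}$) and therefore extends to $H^1_{\mathbf A,0}(\O)$ by density.
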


In general, we give an estimate on the location of the essential spectrum.
To this purpose, let us introduce the quantity 
(which is either a finite non-negative number or infinity)
\[
  V_{\infty} := \liminf_{\abs{x} \to+\infty} \abs{V(x)}
  \,,
\]
and the following family of subsets of the complex plane:
\[
  \rho_{c}
  :=\left\{\mu\in\C\st -c-\Re\mu-|\Im\mu|>0\right\}
  \,,
\]
where~$c$ is any real number.
\begin{theorem}\label{prop.discrete}
Suppose Assumption~\ref{a1}.
We have
\begin{equation}\label{dis1}
\rho_{\gamma_{2}}\subset\rho(\mathcal{\mathscr{L}})\,.
\end{equation}
Moreover, assuming that~$V_{\infty}$ is positive, 
we have
\begin{equation}\label{dis2}
  \rho_{\gamma_{2}}
  \subset\rho_{\gamma_{2}-\gamma_{1}\check V_{\infty}}
  \subset\mathsf{Fred}_{0}(\mathscr{L})
\end{equation}
for all $\check V_{\infty}\in\left(0,V_{\infty}\right)$.
The spectrum of $\mathscr{L}$ 
contained in $\rho_{\gamma_{2}-\gamma_{1}\check V_{\infty}}$, 
if it exists, is formed by isolated eigenvalues 
with finite algebraic multiplicity.
\end{theorem}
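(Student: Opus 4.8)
The plan is to combine the weighted coercivity estimate (Theorem~\ref{theo-minoration}, which yields Assumption~\ref{a1}-type control) with the abstract Lax--Milgram theorem of Almog--Helffer (Theorem~\ref{th-lax-milgram}), and then use a Persson-type argument to locate the essential spectrum. First, for the inclusion \eqref{dis1}, I would fix $\mu \in \rho_{\gamma_2}$, i.e.\ $-\gamma_2 - \Re\mu - |\Im\mu| > 0$, and show that $\mathscr L - \mu$ is boundedly invertible. The natural route is to apply the weighted-coercivity estimate to the shifted form $Q_\mu(u,v) := Q(u,v) - \mu \innp{u}{v}$: the real part of $Q_\mu$ picks up $-\Re\mu \|u\|^2$ and the imaginary part a term bounded by $|\Im\mu|\|u\|^2$, so that after inserting the weight $e^{\Phi}$ (or rather the multiplier built from $\Phi$ and $\Psi$ as in the Almog--Helffer scheme) one gets, for a suitable test vector $v$ comparable to $u$ in the $\mathscr V$-norm,
\[
  |Q_\mu(u,v)| \geq \bigl( \gamma_1 \||V|^{1/2} u\|^2 - \gamma_2 \|u\|^2 \bigr) + \bigl( -\Re\mu - |\Im\mu| \bigr)\|u\|^2 \,,
\]
and since $-\gamma_2 - \Re\mu - |\Im\mu| > 0$ the right-hand side dominates a positive multiple of $\|u\|_{\mathscr V}^2$ (absorbing the $|V|^{1/2}$ term when $\gamma_1 > 0$, or simply keeping the $\|u\|^2$ term). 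This gives the coercivity hypothesis of Theorem~\ref{th-lax-milgram} for $Q_\mu$, hence $\mathscr L - \mu$ is bijective with bounded inverse, i.e.\ $\mu \in \rho(\mathscr L)$.

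For the second part, assume $V_\infty > 0$ and fix $\check V_\infty \in (0, V_\infty)$. The first inclusion $\rho_{\gamma_2} \subset \rho_{\gamma_2 - \gamma_1 \check V_\infty}$ is immediate since $\gamma_1 \check V_\infty > 0$ enlarges the half-plane-type region. For the inclusion $\rho_{\gamma_2 - \gamma_1 \check V_\infty} \subset \mathsf{Fred}_0(\mathscr L)$, the idea is a Persson/decomposition argument: for $|x|$ large we have $|V(x)| \geq \check V_\infty$, so on the exterior region $\Omega \setminus B_R$ the weighted-coercivity estimate improves to give, for $\mu \in \rho_{\gamma_2 - \gamma_1 \check V_\infty}$,
\[
  \Re\bigl( e^{\text{(weight)}} Q_\mu(u,v) \bigr) \geq \bigl( \gamma_1 \check V_\infty - \gamma_2 - \Re\mu - |\Im\mu| \bigr)\|u\|^2 > 0
\]
for all $u$ supported outside $B_R$. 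Via a partition of unity (an IMS-type localisation formula splitting $u = \chi_R u + (1-\chi_R)u$, with the commutator/gradient terms of $\chi_R$ contributing only $O(R^{-2})\|u\|^2$), one concludes that $\mathscr L - \mu$ is invertible modulo a term localised in the ball $B_R$; since $|V|$ is bounded on $B_R$ (by continuity) the operator $\mathscr L$ restricted there has compact resolvent relative to the magnetic Laplacian, so the ball-localised contribution is $\mathscr L$-compact. This shows $\mathscr L - \mu$ is Fredholm, and since the index is constant on the connected set $\rho_{\gamma_2 - \gamma_1 \check V_\infty}$ (which meets $\rho(\mathscr L) \supset \rho_{\gamma_2}$ where the index is $0$), we get index $0$ throughout, i.e.\ $\mathsf{Fred}_0$. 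The final sentence (spectrum in that region is discrete) then follows from the standard analytic Fredholm theory recalled in Appendix~\ref{sec.A}: $\mathscr L - \mu$ is an analytic family of index-$0$ Fredholm operators, invertible at some point, hence invertible off a discrete set of eigenvalues of finite algebraic multiplicity.

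The main obstacle I expect is making the weighted-coercivity estimate work \emph{uniformly for the shifted form $Q_\mu$ with the same weight} and, in the exterior-region step, controlling the localisation error terms: the cut-off $\chi_R$ interacts with the magnetic momentum $P = -i\nabla + \mathbf A$ and with the weight multiplier, and one must check that all the resulting cross terms are genuinely lower order (either $o(|V|)$ by Assumption~\ref{a1}/Proposition~\ref{prop.a0}, or $O(R^{-2})$ from the gradient of $\chi_R$) so that they are absorbed without destroying the strict positivity coming from $\gamma_1 \check V_\infty - \gamma_2 - \Re\mu - |\Im\mu| > 0$. The relative compactness of the ball-localised piece should be routine given that $|V|$ and $\mathbf B$ are continuous, hence bounded on $\bar B_R$, together with the domain characterisation of Theorem~\ref{theo.sep} (or, under only Assumption~\ref{a1}, a direct form-compactness argument on $B_R$).
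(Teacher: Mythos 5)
Your treatment of \eqref{dis1} coincides with the paper's: the weighted coercivity estimate of Theorem~\ref{theo-minoration} with $W=0$, combined with Assumption~\ref{a1}, gives coercivity of $Q_\mu$ on $\mathscr V$ whenever $-\gamma_2-\Re\mu-|\Im\mu|>0$, and Theorem~\ref{th-lax-milgram} then yields invertibility of $\mathscr L-\mu$. For \eqref{dis2} your route diverges from the paper's in the implementation, though not in spirit. You propose an IMS-type partition of unity, proving exterior coercivity on $\Omega\setminus B_R$ and treating the interior piece as relatively compact; this forces you to control the commutators of the cut-off with $-i\nabla+\mathbf A$ and with the multiplier $\Phi$, which is precisely the bookkeeping you flag as the main obstacle. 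The paper sidesteps all of it with a purely additive trick: it sets $M:=|\gamma_2+\Re\mu+|\Im\mu||+1$ and a compactly supported cut-off $\chi$ equal to $1$ on $B_R$, and writes $\mathscr L-\mu=(\widetilde{\mathscr L}-\mu)-M\chi$ with $\widetilde{\mathscr L}:=\mathscr L+M\chi$. The \emph{same} coercivity estimate (no new weight, no localisation) applied to $\widetilde Q_\mu=Q_\mu+M\chi$ is then globally coercive, because $M\chi$ compensates the possibly negative constant on $B_R$ while $\gamma_1|V|-\gamma_2-\Re\mu-|\Im\mu|\geq\gamma>0$ outside. Hence $\widetilde{\mathscr L}-\mu$ is invertible by Theorem~\ref{th-lax-milgram}, and $M\chi$ is a relatively compact perturbation, so Lemma~\ref{lem.rel.comp} gives Fredholmness with index $0$ directly; Lemma~\ref{lem.gru} then gives discreteness, exactly as in your last step. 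What the paper's version buys is that no commutator terms ever appear and, more importantly, that index $0$ comes for free from perturbing an \emph{invertible} operator by a relatively compact one.

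One caveat on your version: an exterior coercivity estimate plus compactness of the interior piece gives, at best, a lower bound of the form $\|(\mathscr L-\mu)u\|\geq c\|u\|-\|Ku\|$ with $K$ relatively compact, which yields only semi-Fredholmness (finite-dimensional kernel and closed range). To conclude that the range has finite codimension you would need either the analogous estimate for the adjoint or, as the paper does, to exhibit $\mathscr L-\mu$ as an invertible operator plus a relatively compact one. Your appeal to constancy of the index on the connected set $\rho_{\gamma_2-\gamma_1\check V_\infty}$ presupposes Fredholmness throughout that set, so it cannot by itself fill this gap; you should make the decomposition into invertible plus relatively compact explicit, at which point your argument essentially collapses into the paper's.
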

\begin{remark}
When $V_{\infty}=+\infty$, 
we recover from Theorem~\ref{prop.discrete}
the result of Proposition~\ref{prop.comp-res}.
\end{remark}

Finally, we state our main result. 
It shows in particular that the discrete spectrum 
in the region $\rho_{\gamma_{2}-\gamma_{1}\check V_{\infty}}$
is associated with exponentially decaying eigenfunctions 
and that this decay may be estimated in terms of an Agmon-type distance. 
\begin{theorem}\label{theo.main}
Suppose Assumption~\ref{a1}.
Let us assume that 
$
  \mathsf{sp}(\mathscr{L}) \cap
  \rho_{\gamma_{2}-\gamma_{1}\check V_{\infty}}
  \neq\emptyset
$ 
and consider~$\lambda$ in this set. Let us define the metric
\[
  g(x) := \left(
  \gamma_{1}|V(x)|- \Re(\lambda) - \abs{\Im(\lambda)}-\gamma_{2}
  \right)_{+}  \, \der x^2
  \,,
\]
and the corresponding \emph{Agmon distance} 
(to any fixed point of~$\Omega$) 
$\der_{\mathsf{Ag}}(x)$ that satisfies 
\begin{equation}\label{eq.Agmon}
  |\nabla \der_{\mathsf{Ag}}|^2
  =\left(\gamma_{1}|V|- \Re(\lambda) - \abs{\Im(\lambda)}-\gamma_{2}\right)_{+}
  \,.
\end{equation}
Pick up any $\eps\in(0,1)$. 
If~$\psi$ is an eigenfunction associated with~$\lambda$, 
we have
\begin{equation}\label{eq.est-Agmon}
  e^{\frac{1-\eps}{3}\der_{\mathsf{Ag}}}\,\psi\in L^2(\Omega)
  \,.
\end{equation}
The same conclusion holds for all~$\psi$ 
in the algebraic eigenspace associated with~$\lambda$.
\end{theorem}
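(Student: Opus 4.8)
The plan is to establish the Agmon-type decay \eqref{eq.est-Agmon} by the standard commutator (Agmon--Persson) technique, but carried out in the weighted-coercivity framework of Theorem~\ref{theo-minoration} rather than via quadratic forms of self-adjoint operators. The starting point is the eigenvalue equation $\mathscr{L}\psi=\lambda\psi$, which by Theorem~\ref{theo.sep} guarantees $\psi\in\mathscr{V}$ with $(-i\nabla+\mathbf{A})^2\psi\in L^2$ and $V\psi\in L^2$. First I would fix a real-valued bounded Lipschitz weight $\varphi$ (a truncated and regularised multiple of $\der_{\mathsf{Ag}}$, so that $|\nabla\varphi|^2\leq(1-\eps')(\gamma_1|V|-\Re\lambda-|\Im\lambda|-\gamma_2)_+$ pointwise with room to spare), and test the identity $Q(\psi,v)=\lambda\innp{\psi}{v}$ with the weighted function $v=e^{2\varphi}\psi$. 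Since $\varphi$ is bounded, $v\in\mathscr{V}$ and all integrations by parts are legitimate; this is the step where I exploit that $\varphi$ is \emph{bounded} to avoid any a priori knowledge of the decay.

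The algebraic heart of the computation is the magnetic ``IMS''-type identity: writing $u:=e^{\varphi}\psi$, one gets
\[
  \Re\, e^{2\varphi}Q(\psi,\psi)
  = \Re Q(u,u) - \int_\O |\nabla\varphi|^2|u|^2\,\der x
  + (\text{terms controlled by } \Re\lambda),
\]
together with the analogous relation for $\Im$. The point is that the right-hand side is exactly the combination to which the weighted coercivity estimate of Theorem~\ref{theo-minoration} applies: that estimate provides, for the auxiliary function $u$, a lower bound of the form $\Re Q(u,u)+|\Im Q(u,u)| \geq \gamma_1\int|V||u|^2 - \gamma_2\|u\|^2$ (this is precisely what drives \eqref{dis1}). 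Feeding this in and using $Q(\psi,e^{2\varphi}\psi)=\lambda\,\|e^\varphi\psi\|^2$ on the left, the cross terms and the $|\nabla\varphi|^2$ term get absorbed because of the strict inequality $|\nabla\varphi|^2\leq(1-\eps')(\gamma_1|V|-\Re\lambda-|\Im\lambda|-\gamma_2)_+$ on the region where the bracket is positive, while on the complementary region the weight $e^{\varphi}$ is comparable to a constant. What remains is an inequality of the shape
\[
  \eps'\!\!\int_{\{g>0\}}\!\!(\gamma_1|V|-\Re\lambda-|\Im\lambda|-\gamma_2)\,|e^\varphi\psi|^2\,\der x
  \;\leq\; C\!\!\int_{\{g=0\}}\!\! |\psi|^2\,\der x \;\leq\; C\|\psi\|^2,
\]
with $C$ independent of the truncation parameter.

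Next I would let the truncation parameter tend to infinity: since the constant $C$ on the right does not depend on it, Fatou's lemma yields $e^{\varphi_\infty}\psi\in L^2$, where $\varphi_\infty$ is the corresponding (now unbounded) limit weight. Choosing the truncation so that $\varphi_\infty$ approximates $\tfrac{1-\eps}{3}\der_{\mathsf{Ag}}$ from below --- the factor $\tfrac13$ is forced by the power $\tfrac32$ appearing already in Assumption~\ref{a1} and hence in the coercivity constants of Theorem~\ref{theo-minoration}, which is why the same $\tfrac13$ shows up in \eqref{eq.est-Agmon} --- gives the claimed conclusion $e^{\frac{1-\eps}{3}\der_{\mathsf{Ag}}}\psi\in L^2(\Omega)$. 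For the extension to the whole algebraic eigenspace I would argue by induction on the Jordan chain: if $(\mathscr{L}-\lambda)^{k+1}\phi=0$ and $\psi:=(\mathscr{L}-\lambda)\phi$ already satisfies the weighted bound, then testing $Q(\phi,e^{2\varphi}\phi)-\lambda\|e^\varphi\phi\|^2=\innp{\psi}{e^{2\varphi}\phi}$ and using Cauchy--Schwarz on the right (where $e^\varphi\psi$ is now known to be in $L^2$) feeds the induction through with the same absorption argument.

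The main obstacle I anticipate is the absorption bookkeeping: one must simultaneously swallow (a) the gradient-of-weight term $\int|\nabla\varphi|^2|u|^2$, (b) the non-positive contribution $-9(|\nabla\Phi|^2+|\nabla\Psi|^2)$ hidden inside the coercivity estimate, and (c) the magnetic cross terms coming from $\Re\innp{\nabla\varphi\cdot(-i\nabla+\mathbf{A})u}{u}$ in the IMS identity, all against the single reservoir $\gamma_1\int|V||u|^2$. This forces the explicit room in the strict inequality for $|\nabla\varphi|^2$ and a careful choice of how much of $\gamma_1|V|$ is reserved for the coercivity mechanism versus the Agmon weight --- exactly the point where the constant $\eps$ (and the $\tfrac{1}{12d}$ in \eqref{eq.a0}) enters. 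Everything else --- the density of $\mathcal{C}_0^\infty$, the legitimacy of the test function, passing to the limit --- is routine given Theorems~\ref{theo.sep} and~\ref{theo-minoration}.
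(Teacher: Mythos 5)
Your proposal follows the paper's own proof essentially verbatim: test the eigenvalue identity against $e^{2W}\psi$ and $\Phi e^{2W}\psi$ for truncated Agmon weights $W$, invoke the weighted coercivity of Theorem~\ref{theo-minoration} (which already contains the ``magnetic IMS'' identity you describe via Lemma~\ref{lem.loc}) to absorb the $9\abs{\nabla W}^2$ penalty into the reservoir $\gamma_1\abs{V}$, pass to the limit by Fatou, and handle the algebraic eigenspace by induction on the Jordan chain exactly as you describe (the paper packages the inductive step as Proposition~\ref{prop.inhomogen} on the inhomogeneous equation). Two minor corrections: you do not need Theorem~\ref{theo.sep} to place $\psi$ in $\mathscr V$ --- this is automatic from $\mathsf{Dom}(\mathscr L)\subset\mathscr V$ in \eqref{domain}, and invoking it would smuggle in the stronger hypotheses \eqref{hyp-nabla}--\eqref{eq.Im-dom} not assumed in Theorem~\ref{theo.main}; and the factor $\frac13$ in \eqref{eq.est-Agmon} is forced by the explicit constant $9$ in Assumption~\ref{a1} and Theorem~\ref{theo-minoration} (one needs $9\abs{\nabla W}^2\leq(1-\eps)\abs{\nabla\der_{\mathsf{Ag}}}^2$, hence the slope $\frac{\sqrt{1-\eps}}{3}$), not directly by the power $\frac32$ in the sufficient condition \eqref{hyp-nabla}.
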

\begin{remark}\label{rem.Agm}
If there exist $R>0$ and $\gamma > 0$ such that, 
\[
  \forall |x| \geq R
  \,,\qquad 
  \gamma_{1}|V|- \Re(\lambda) - \abs{\Im(\lambda)}-\gamma_{2}\geq \gamma 
  \,,
\]
then there exists $M\geq 0$ such that, 
in this region, $\der_{\mathsf{Ag}}(x)\geq \gamma\,|x|-M$.
\end{remark}
%

%--------------------------------------------------------%
\section{Weighted coercivity and representation theorems}\label{sec.1}
%--------------------------------------------------------%
%
The main objective of this section is to prove 
Theorems~\ref{prop.def-op} and~\ref{theo.sep}.

\subsection{Two abstract representation theorems}
We first recall the following generalised representation theorems from \cite{AlmogHe15}.

\begin{theorem}[{\cite[Thm.~2.1]{AlmogHe15}}] \label{th-lax-milgram0}
Let $\mathcal{V}$ be a Hilbert space.
Let $Q$ be a continuous sesquilinear form on $\mathcal{V} \times \mathcal{V}$. Assume that there exist $\Phi_1,\Phi_2 \in \mathcal{L}(\mathcal{V})$ and $\a > 0$ such that for all $u \in \mathcal{V}$ we have 
\begin{align*}
\abs{Q(u,u)} + \abs{Q(\Phi_1(u),u)} &\geq \a \nr{u}_\mathcal{V}^2 \,,
\\
\abs{Q(u,u)} + \abs{Q(u,\Phi_2(u))} &\geq \a \nr{u}_\mathcal{V}^2 \,.
\end{align*}
The operator $\mathscr{A}$ defined by 
\[
\forall u,v \in \mathcal{V}, \quad Q(u,v) = \innp{\mathscr{A} u}{v}_\mathcal{V}
\]
is a continuous isomorphism of $\mathcal{V}$ onto $\mathcal{V}$
with bounded inverse.
\end{theorem}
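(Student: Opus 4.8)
The plan is to prove this generalised Lax--Milgram theorem by reducing it to the classical Lax--Milgram lemma applied to two auxiliary forms built out of $Q$ and the operators $\Phi_1,\Phi_2$. First I would observe that the operator $\mathscr{A}\in\mathcal{L}(\mathcal{V})$ is well defined: since $Q$ is continuous on $\mathcal{V}\times\mathcal{V}$, for each fixed $u$ the antilinear functional $v\mapsto\overline{Q(u,v)}$ is bounded, so the Riesz representation theorem produces a unique $\mathscr{A}u\in\mathcal{V}$, and linearity plus the uniform bound $\nr{\mathscr{A}u}_\mathcal{V}\leq\nr{Q}\,\nr{u}_\mathcal{V}$ give $\mathscr{A}\in\mathcal{L}(\mathcal{V})$. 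The theorem then amounts to showing that $\mathscr{A}$ is bijective with bounded inverse, equivalently that $\mathscr{A}$ is bounded below and has dense range (or, using the second hypothesis symmetrically, that $\mathscr{A}^*$ is bounded below as well).

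The key step is to extract injectivity and closed range from the first coercivity inequality. For $u\in\mathcal{V}$ write $Q(u,u)=\innp{\mathscr{A}u}{u}_\mathcal{V}$ and $Q(\Phi_1(u),u)=\innp{\mathscr{A}\Phi_1(u)}{u}_\mathcal{V}$, so that
\begin{equation*}
  \a\nr{u}_\mathcal{V}^2
  \leq \abs{\innp{\mathscr{A}u}{u}_\mathcal{V}}+\abs{\innp{\mathscr{A}\Phi_1(u)}{u}_\mathcal{V}}
  \leq \big(\nr{\mathscr{A}u}_\mathcal{V}+\nr{\Phi_1}\,\nr{\mathscr{A}u}_\mathcal{V}\big)\nr{u}_\mathcal{V},
\end{equation*}
where I have used $\nr{\mathscr{A}\Phi_1(u)}_\mathcal{V}=\sup_{\nr{w}_\mathcal{V}\leq1}\abs{Q(\Phi_1(u),w)}$; more carefully one bounds $\abs{\innp{\mathscr{A}\Phi_1(u)}{u}_\mathcal{V}}=\abs{Q(\Phi_1(u),u)}\leq\nr{Q}\,\nr{\Phi_1}\,\nr{u}_\mathcal{V}^2$ — hmm, that reintroduces $\nr{u}_\mathcal{V}^2$ and gives nothing, so instead the correct route is to keep $\abs{Q(\Phi_1(u),u)}=\abs{\innp{\mathscr{A}u}{\Phi_1^*(u)}_\mathcal{V}}\leq\nr{\mathscr{A}u}_\mathcal{V}\,\nr{\Phi_1}\,\nr{u}_\mathcal{V}$, which together with $\abs{Q(u,u)}\leq\nr{\mathscr{A}u}_\mathcal{V}\nr{u}_\mathcal{V}$ yields
\begin{equation*}
  \a\nr{u}_\mathcal{V}^2\leq(1+\nr{\Phi_1})\,\nr{\mathscr{A}u}_\mathcal{V}\,\nr{u}_\mathcal{V},
\end{equation*}
hence the lower bound $\nr{\mathscr{A}u}_\mathcal{V}\geq\a(1+\nr{\Phi_1})^{-1}\nr{u}_\mathcal{V}$. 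This immediately gives that $\mathscr{A}$ is injective and has closed range.

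Next I would run the same argument on the second inequality, now writing $Q(u,\Phi_2(u))=\innp{\mathscr{A}u}{\Phi_2(u)}_\mathcal{V}$ and also $Q(u,u)=\innp{u}{\mathscr{A}^*u}_\mathcal{V}$ where $\mathscr{A}^*$ is the Hilbert-space adjoint, to obtain the companion bound $\nr{\mathscr{A}^*u}_\mathcal{V}\geq\a(1+\nr{\Phi_2})^{-1}\nr{u}_\mathcal{V}$; equivalently, $\mathscr{A}^*$ is injective, so $\Ran(\mathscr{A})$ is dense. Combined with the closed-range property, $\Ran(\mathscr{A})=\mathcal{V}$, so $\mathscr{A}$ is a bijection, and the lower bound on $\nr{\mathscr{A}u}_\mathcal{V}$ shows $\mathscr{A}^{-1}$ is bounded (indeed $\nr{\mathscr{A}^{-1}}\leq(1+\nr{\Phi_1})/\a$). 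The one point requiring care — and the main obstacle — is the bookkeeping of which factor carries the $\nr{u}_\mathcal{V}$: the estimate must be arranged so that only \emph{one} power of $\nr{u}_\mathcal{V}$ appears on the right (pulling the other into $\nr{\mathscr{A}u}_\mathcal{V}$ via the inner-product pairing and the adjoint $\Phi_1^*$), since a naive bound using continuity of $Q$ alone produces $\nr{u}_\mathcal{V}^2$ on both sides and is vacuous. Once the pairing is set up correctly the rest is the standard closed-range-plus-dense-range argument.
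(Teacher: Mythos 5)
Your overall architecture is the right one (and is essentially the standard proof of this result, which the paper itself only quotes from Almog--Helffer without reproving): one hypothesis should give a lower bound $\nr{\mathscr{A}u}_\mathcal{V}\gtrsim\nr{u}_\mathcal{V}$, the other a lower bound on the adjoint, and then injectivity, closed range and dense range finish the job. But the identity you single out as ``the correct route'' is false: $Q(\Phi_1(u),u)=\innp{\mathscr{A}\Phi_1(u)}{u}_\mathcal{V}$, whereas $\innp{\mathscr{A}u}{\Phi_1^*(u)}_\mathcal{V}=\innp{\Phi_1\mathscr{A}u}{u}_\mathcal{V}$; these agree only if $\Phi_1$ commutes with $\mathscr{A}$, which is not assumed. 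Likewise your first display implicitly uses $\nr{\mathscr{A}\Phi_1(u)}_\mathcal{V}\leq\nr{\Phi_1}\,\nr{\mathscr{A}u}_\mathcal{V}$, which fails for the same reason. So, as written, the first hypothesis does \emph{not} yield the lower bound on $\mathscr{A}$, and the core estimate of your proof is unjustified.

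The fix is to pair $\Phi_1$ with the adjoint instead: from $Q(w,v)=\innp{w}{\mathscr{A}^*v}_\mathcal{V}$ one gets
\begin{equation*}
\abs{Q(\Phi_1(u),u)}=\abs{\innp{\Phi_1(u)}{\mathscr{A}^*u}_\mathcal{V}}\leq\nr{\Phi_1}\,\nr{u}_\mathcal{V}\,\nr{\mathscr{A}^*u}_\mathcal{V}
\qquad\text{and}\qquad
\abs{Q(u,u)}\leq\nr{u}_\mathcal{V}\,\nr{\mathscr{A}^*u}_\mathcal{V},
\end{equation*}
so the \emph{first} hypothesis gives $\nr{\mathscr{A}^*u}_\mathcal{V}\geq\a(1+\nr{\Phi_1})^{-1}\nr{u}_\mathcal{V}$, i.e.\ $\Ker(\mathscr{A}^*)=\{0\}$ and hence $\Ran(\mathscr{A})$ is dense. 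Symmetrically, $Q(u,\Phi_2(u))=\innp{\mathscr{A}u}{\Phi_2(u)}_\mathcal{V}$ shows that the \emph{second} hypothesis gives $\nr{\mathscr{A}u}_\mathcal{V}\geq\a(1+\nr{\Phi_2})^{-1}\nr{u}_\mathcal{V}$, whence injectivity and closed range (and $\nr{\mathscr{A}^{-1}}\leq(1+\nr{\Phi_2})/\a$, not $(1+\nr{\Phi_1})/\a$). With the roles of the two inequalities swapped in this way, your closing closed-range-plus-dense-range argument is correct and complete.
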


\begin{theorem}[{\cite[Thm.~2.2]{AlmogHe15}}] \label{th-lax-milgram}
In addition to the hypotheses of Theorem~\ref{th-lax-milgram0},
assume that~$\mathsf{H}$ is a Hilbert space such that 
$\mathcal{V}$ is continuously embedded and dense in~$\mathsf{H}$ 
and that~$\Phi_1$ and~$\Phi_2$ extend to bounded operators on~$\mathsf{H}$.
Then the operator~$\mathscr{L}$ defined by 
\[
\forall u \in \mathsf{Dom}(\mathscr{L}), \quad
\forall v \in \mathcal{V}, 
\qquad Q(u,v) =: \innp{\mathscr{L} u}{v}_\mathsf{H}
\]
where
\[
  \mathsf{Dom}(\mathscr{L}) := 
  \left\{ u \in \mathcal{V} \st \text{the map } v \mapsto Q (u,v) 
  \text{ is continuous on $\mathcal{V}$ for the norm of $\mathsf{H}$} \right\}
  \,,
\]
satisfies the following properties:
\begin{enumerate}
\item[\emph{(i)}] 
$\mathscr{L}$ is bijective from $\mathsf{Dom}(\mathscr{L})$ onto $\mathsf{H}$,
\item[\emph{(ii)}] 
$\mathsf{Dom}(\mathscr{L})$ is dense in $\mathcal{V}$ and in $\mathsf{H}$,
\item[\emph{(iii)}] 
$\mathscr{L}$ is closed.
\end{enumerate}
\end{theorem}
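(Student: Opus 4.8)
The plan is to deduce Theorem~\ref{th-lax-milgram} from Theorem~\ref{th-lax-milgram0} by transporting the isomorphism that the latter produces to the Hilbert space $\mathsf{H}$. Let $\iota\colon\mathcal{V}\hookrightarrow\mathsf{H}$ denote the inclusion, which is bounded, injective, and has dense range, and let $\iota^*\colon\mathsf{H}\to\mathcal{V}$ be its Hilbert-space adjoint under the Riesz identifications, so $\innp{\iota^* f}{v}_{\mathcal{V}}=\innp{f}{\iota v}_{\mathsf{H}}$ for all $f\in\mathsf{H}$ and $v\in\mathcal{V}$. Since $\iota$ has dense range, $\iota^*$ is injective; since $\iota$ is injective, $v\perp\Ran(\iota^*)$ forces $\iota v=0$, hence $v=0$, so $\Ran(\iota^*)$ is dense in $\mathcal{V}$. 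The hypotheses of Theorem~\ref{th-lax-milgram0} are part of the present ones, so it yields $\mathscr{A}\in\mathcal{L}(\mathcal{V})$ with $Q(u,v)=\innp{\mathscr{A}u}{v}_{\mathcal{V}}$ for all $u,v\in\mathcal{V}$, bijective, with $\mathscr{A}^{-1}\in\mathcal{L}(\mathcal{V})$.

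The key step is to prove
\[
  \Dom(\mathscr{L})=\mathscr{A}^{-1}\big(\Ran(\iota^*)\big)
  \qquad\text{and}\qquad
  \iota^*\mathscr{L}u=\mathscr{A}u \quad\text{for every } u\in\Dom(\mathscr{L}),
\]
so that $\mathscr{L}^{-1}=\iota\,\mathscr{A}^{-1}\,\iota^*$ as a map $\mathsf{H}\to\mathsf{H}$. Indeed, for $u\in\mathcal{V}$ one always has $Q(u,v)=\innp{\mathscr{A}u}{v}_{\mathcal{V}}$ for all $v\in\mathcal{V}$; the map $v\mapsto Q(u,v)$ is continuous for the norm of $\mathsf{H}$ exactly when — using density of $\iota(\mathcal{V})$ in $\mathsf{H}$ and the Riesz theorem to extend and represent it — one has $\innp{\mathscr{A}u}{v}_{\mathcal{V}}=\innp{f}{\iota v}_{\mathsf{H}}=\innp{\iota^* f}{v}_{\mathcal{V}}$ for all $v$ and some $f\in\mathsf{H}$, i.e.\ exactly when $\mathscr{A}u\in\Ran(\iota^*)$; and then $\mathscr{L}u=f=(\iota^*)^{-1}\mathscr{A}u$, the inverse being unambiguous since $\iota^*$ is injective.

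The three assertions now follow. For (i): given $f\in\mathsf{H}$, set $u:=\mathscr{A}^{-1}\iota^* f$; then $\mathscr{A}u=\iota^* f\in\Ran(\iota^*)$, so $u\in\Dom(\mathscr{L})$ and $\mathscr{L}u=f$, which gives surjectivity, while $\mathscr{L}u=0$ forces $\mathscr{A}u=\iota^*0=0$, hence $u=0$, which gives injectivity. For (ii): $\Ran(\iota^*)$ is dense in $\mathcal{V}$ and $\mathscr{A}^{-1}$ is a homeomorphism of $\mathcal{V}$, so $\Dom(\mathscr{L})=\mathscr{A}^{-1}(\Ran\iota^*)$ is dense in $\mathcal{V}$, hence, by the dense continuous embedding $\mathcal{V}\hookrightarrow\mathsf{H}$, also dense in $\mathsf{H}$. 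For (iii): $\mathscr{L}^{-1}=\iota\,\mathscr{A}^{-1}\,\iota^*$ is a composition of bounded operators, hence bounded and everywhere defined on $\mathsf{H}$, so its graph is closed; flipping coordinates in $\mathsf{H}\times\mathsf{H}$ turns this into the graph of $\mathscr{L}$, so $\mathscr{L}$ is closed.

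Granting Theorem~\ref{th-lax-milgram0} — whose proof, a $\mathbb{T}$-coercivity-type argument producing $\mathscr{A}$ together with a left and a right inverse from the two coercivity bounds, is the one genuinely substantial ingredient — the remaining work is bookkeeping. The point that requires care is the key step: that the form-theoretic domain defined through $\mathsf{H}$-continuity of $v\mapsto Q(u,v)$ is precisely $\mathscr{A}^{-1}(\Ran\iota^*)$, and that $\Ran(\iota^*)$ is dense in $\mathcal{V}$, which is exactly what delivers the density statement in (ii).
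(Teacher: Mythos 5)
Your argument is correct. Note that the paper itself gives no proof of this statement --- it is quoted verbatim from Almog--Helffer \cite[Thm.~2.2]{AlmogHe15} --- but your derivation from Theorem~\ref{th-lax-milgram0} is the standard one and matches the original in substance: identifying $\Dom(\mathscr{L})=\mathscr{A}^{-1}(\Ran\iota^*)$ with $\iota^*$ injective and of dense range, which delivers bijectivity, density in $\mathcal{V}$ and in $\mathsf{H}$, and closedness via the bounded inverse $\iota\,\mathscr{A}^{-1}\iota^*$. (The hypothesis that $\Phi_1,\Phi_2$ extend to $\mathsf{H}$ is not needed for conclusions (i)--(iii), and you rightly do not use it; in \cite{AlmogHe15} it serves further conclusions about the adjoint not quoted here.)
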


\subsection{Weighted coercivity estimates}\label{sec.WCE}
For any complex number~$\mu$,
consider the shifted form
$Q_\mu (u,v):= Q(u,v) - \mu \innp{u}{v}$.
The aim of this subsection is to prove the following estimate
and deduce Theorem~\ref{prop.def-op} from it (with help of Theorem~\ref{th-lax-milgram}).
\begin{theorem}[Weighted coercivity] \label{theo-minoration}
For every $\mu\in\C$, 
$W \in W^{1,\infty}(\Omega;\R)$ 
and all $u \in \mathcal{C}^\infty_{0}(\Omega)$, we have 
\begin{multline*}
  \Re \big[ Q_{\mu} (u, e^{2W} u) \big] 
  + \Im \big[ Q_{\mu} (u, \Phi e^{2W} u) \big] 
  \geq \frac 12 \nr{(-i\nabla +  \mathbf{A}) e^W u}^2
  \\
   + \int_\O \abs{e^W u}^2  
  \left[\frac {V_{2}^2 +\frac{1}{12d} \abs{\mathbf{B}}^2}{\mBV} + V_{1}-\Re\mu-|\Im\mu| -  9 \left( \abs {\nabla \Phi}^2 + \abs{\nabla \Psi}^2 + \abs{\nabla W}^2 \right) 
  \right] \der x
  \,.
\end{multline*}
\end{theorem}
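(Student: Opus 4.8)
\textbf{Proof proposal for Theorem~\ref{theo-minoration}.}

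The plan is to compute the left-hand side directly for $u \in \mathcal{C}^\infty_0(\Omega)$ by moving the weights inside and using integration by parts, the key algebraic trick being the substitution $v = e^W u$ so that everything is expressed in terms of $v$ and the magnetic gradient $(-i\nabla + \mathbf{A})v$. First I would treat the term $\Re[Q_\mu(u, e^{2W}u)]$. Writing $Q_\mu(u, e^{2W}u) = \innp{(-i\nabla+\mathbf A)u}{(-i\nabla+\mathbf A)(e^{2W}u)} + \int_\O (V - \mu)|u|^2 e^{2W}\,\der x$, one uses the Leibniz rule $(-i\nabla+\mathbf A)(e^{2W}u) = e^{2W}(-i\nabla+\mathbf A)u - 2i e^{2W}(\nabla W) u$ and then rewrites $e^W (-i\nabla+\mathbf A)u = (-i\nabla+\mathbf A)(e^W u) + i(\nabla W)e^W u = (-i\nabla+\mathbf A)v + i(\nabla W)v$. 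Expanding the square and taking the real part, the cross terms involving $\nabla W$ that are linear in $(-i\nabla+\mathbf A)v$ cancel, leaving
\[
  \Re\big[Q_\mu(u,e^{2W}u)\big] = \nr{(-i\nabla+\mathbf A)v}^2 - \int_\O |\nabla W|^2 |v|^2\,\der x + \int_\O (V_1 - \Re\mu)|v|^2\,\der x .
\]

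Next I would handle the more delicate term $\Im[Q_\mu(u, \Phi e^{2W}u)]$, which is where $V_2$, $\mathbf B$ and the weights $\Phi,\Psi$ enter. Here the strategy is the same substitution, but now one must integrate by parts to turn $\int_\O \Phi |v|^2$ combined with the imaginary part of the magnetic Dirichlet term into something producing the curvature term $\frac{1}{12d}|\mathbf B|^2 / \mBV$. This is essentially the computation in \cite{AlmogHe15}: the imaginary part of $\innp{(-i\nabla+\mathbf A)v}{(-i\nabla+\mathbf A)(\Phi v)}$ is $\innp{(-i\nabla+\mathbf A)v}{-i(\nabla\Phi)v}$ up to real contributions, and one bounds the $\nabla\Phi$ cross term by a Cauchy--Schwarz absorbing a fraction of $\nr{(-i\nabla+\mathbf A)v}^2$; the magnetic field $\mathbf B$ surfaces via the commutator $i[P_j,P_k] = B_{jk}$ when one symmetrises. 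The term $\int_\O V_2 \Phi |v|^2 = \int_\O V_2^2/\mBV\,|v|^2$ comes out directly from the definition $\Phi = V_2/\mBV$, and similarly the $\Psi$-gradient term is produced from controlling the magnetic contribution with $\Psi = \mathbf B / \mBV$. The numerical constants $\frac12$, $9$, $\frac{1}{12d}$ are exactly those tuned so that, after all Cauchy--Schwarz absorptions (with $\eps = \frac12$ for the gradient, and appropriate splittings for the $d$ components of $\mathbf B$), at least half of the kinetic term and all the stated potential/curvature terms survive with the claimed signs.

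The main obstacle is the bookkeeping in the $\Im$-term: one has to carefully track how $|\mathbf B|^2/\mBV$ emerges — it is not present a priori in $Q_\mu$ and must be manufactured through an integration by parts that introduces $\partial_j A_k - \partial_k A_j$, then estimated component-by-component over the $d(d-1)/2$ (or, counting ordered pairs, $d^2$) entries of $\mathbf B$, which is the source of the $\frac{1}{12d}$ factor. A secondary technical point is justifying all integrations by parts: since $u \in \mathcal{C}^\infty_0(\Omega)$ and $W \in W^{1,\infty}(\Omega;\R)$, $\Phi,\Psi \in \mathcal{C}^0$ (indeed $\mathcal{C}^1$ by the smoothness of $V,\mathbf A$), and $\mBV \geq 1$ keeps the weights bounded, every term is integrable and the boundary terms vanish, so no approximation argument is needed. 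Once the two pieces are added, the $-|\nabla W|^2$, $-9|\nabla\Phi|^2$, $-9|\nabla\Psi|^2$, $-\Re\mu$, $-|\Im\mu|$ contributions line up precisely with the bracket on the right-hand side, which completes the proof.
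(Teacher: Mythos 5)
Your outline follows essentially the same route as the paper: the exact identity for $\Re\big[Q(u,e^{2W}u)\big]$ obtained from the substitution $v=e^Wu$ (this is the paper's Lemma~\ref{lem.loc} with $\chi=e^W$), a parametrised Cauchy--Schwarz absorption of the $-i(\nabla\Phi+2\Phi\nabla W)$ cross term arising in $\Im\big[Q(u,\Phi e^{2W}u)\big]$, and the extraction of $\int_\O \abs{\mathbf{B}}^2\mBV^{-1}\abs{v}^2$ via the commutator identity $B_{jk}=i[P_j,P_k]$ tested against $\Psi_{jk}v$ and summed over the $d^2$ ordered pairs (the paper's Lemma~\ref{lem-BmBV}), which is indeed the source of the factor $\tfrac{1}{12d}$ and of the $\abs{\nabla\Psi}^2$ error term. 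Two points of precision. First, the $\abs{\mathbf{B}}^2$ contribution is not manufactured inside the $\Im$-term: it is harvested a posteriori by sacrificing a fraction $\b$ of the kinetic energy $\nr{(-i\nabla+\mathbf{A})e^Wu}^2$ already secured in the real part. Second, your parenthetical choice ``$\eps=\tfrac12$ for the gradient'' cannot deliver the stated constants: if the Cauchy--Schwarz step consumes half of the kinetic term, nothing remains to feed Lemma~\ref{lem-BmBV} while still keeping the coefficient $\tfrac12$ in front of $\nr{(-i\nabla+\mathbf{A})e^Wu}^2$. The paper takes $\a=\b=\tfrac14$, which gives $1-\a-\b=\tfrac12$ for the kinetic term, $\tfrac{2+\a}{\a}=9$ for $\abs{\nabla W}^2$ (this is where the $9$ is tight), $\tfrac{1}{2\a}=2\leq 9$ for $\abs{\nabla\Phi}^2$, and $\tfrac{\b}{3d}=\tfrac{1}{12d}$ for $\abs{\mathbf{B}}^2\mBV^{-1}$. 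With that adjustment your argument is the paper's proof.
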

 
In order to prove Theorem \ref{theo-minoration},
we need two lemmata.

\begin{lemma} \label{lem-BmBV}
For every $u \in\mathcal{C}^\infty_{0}(\Omega)$, 
we have 
\[
  \int_{\O} \frac {\abs{\mathbf{B}}^2}{\mBV} \abs u^2 \, \der x 
  \leq 3d \nr{(-i\nabla + \mathbf{A}) u}^2 + \nr{(\nabla \Psi) u}^2
  \,.
\]
\end{lemma}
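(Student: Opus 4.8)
The plan is to express everything in terms of the magnetic momenta $P_\ell = -i\partial_\ell + A_\ell$ and to exploit the commutator identity $B_{jk} = i[P_j,P_k]$ from \eqref{tensor}. First I would write, for $u \in \mathcal C_0^\infty(\Omega)$,
\[
  \nr{(-i\nabla + \mathbf A) u}^2 = \sum_{j=1}^d \nr{P_j u}^2\,,
\]
and try to bound $\int_\O \frac{\abs{\mathbf B}^2}{\mBV}\abs u^2\,\der x = \sum_{j,k} \int_\O \frac{B_{jk}^2}{\mBV}\abs u^2\,\der x$. The natural move is to pair one factor $B_{jk}$ with the commutator: writing $B_{jk}\abs u^2 = \abs u^2 \, i[P_j,P_k]$ and integrating by parts (legitimate since $u$ is compactly supported and smooth), one transfers a $P_j$ or $P_k$ onto $u$ at the cost of differentiating the multiplier $\frac{B_{jk}}{\mBV}$. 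Concretely I expect something of the shape
\[
  \int_\O \frac{B_{jk}}{\mBV}\,\overline{(B_{jk} u)}\,u\,\der x
  = i\int_\O \frac{B_{jk}}{\mBV}\,\big(\overline{P_j u}\,P_k u - \overline{P_k u}\,P_j u\big)\,\der x
  - i\int_\O \Big(\partial_j \tfrac{B_{jk}}{\mBV}\,\overline{u}\,P_k u - \partial_k \tfrac{B_{jk}}{\mBV}\,\overline{u}\,P_j u\Big)\,\der x,
\]
where the last integral produces exactly the gradient of a component of $\Psi = \mathbf B/\mBV$ (up to the extra $\partial_j(1/\mBV)$-type terms, which are also controlled because $\abs{\mathbf B} \le \mBV$).

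The key step is then a Cauchy--Schwarz / Young inequality to absorb the cross terms. In the first integral on the right I would estimate $\big|\frac{B_{jk}}{\mBV}\big| \le 1$ (since $\abs{\mathbf B}\le \mBV$ pointwise) and bound $|\overline{P_j u}\,P_k u| \le \tfrac12(\abs{P_j u}^2 + \abs{P_k u}^2)$; summing over the $d^2$ pairs $(j,k)$ gives a constant of order $d$ times $\sum_j \nr{P_j u}^2 = \nr{(-i\nabla+\mathbf A)u}^2$, and tracking the combinatorics carefully should yield precisely the factor $3d$. For the second integral I would split off a small $\eps$ to pair with $\abs{P_k u}^2$ and the complementary $\eps^{-1}$ with $\abs{(\nabla \Psi_{jk})u}^2$; but since the target constant in front of $\nr{(\nabla\Psi)u}^2$ is exactly $1$ and the one in front of the magnetic term is already $3d$, I would instead feed the whole ``boundary'' contribution into the $\nr{(\nabla\Psi)u}^2$ slot and the whole ``commutator'' contribution into the $3d\,\nr{(-i\nabla+\mathbf A)u}^2$ slot, choosing the split in Young's inequality to land on those two sharp constants.

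The main obstacle I anticipate is bookkeeping of constants: making sure that the sum over all $j,k$ of the Cauchy--Schwarz estimates collapses to the clean pair $(3d, 1)$ rather than some messier pair, and handling the term coming from $\nabla(1/\mBV)$ --- here one uses that $\nabla\big(\tfrac{B_{jk}}{\mBV}\big) = \tfrac{\nabla B_{jk}}{\mBV} - \tfrac{B_{jk}\nabla \mBV}{\mBV^2}$, so both pieces are bounded by $\abs{\nabla\Psi}$ in the relevant combination (the second being $\abs{\Psi_{jk}}\cdot\tfrac{\abs{\nabla\mBV}}{\mBV}$, and $\tfrac{\abs{\nabla\mBV}}{\mBV}$ is itself controlled once one notes $\nabla\mBV$ involves $\nabla\mathbf B$ and $\nabla V$, which appear inside $\nabla\Psi$ and $\nabla\Phi$). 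A cleaner route, which I would try first, is to work directly with the definition $\Psi_{jk} = B_{jk}/\mBV$ and never unpack $\mBV$: then $\nabla\Psi_{jk}$ is a single object and the identity above reads $B_{jk} = \mBV\,\Psi_{jk}$, so the integration by parts of $\int \Psi_{jk} B_{jk}\abs u^2$ directly throws derivatives onto $\Psi_{jk}$, giving the term $\nr{(\nabla\Psi)u}^2$ with no residual pieces. This should make the constant-tracking essentially mechanical, leaving only the elementary summation over $(j,k)$ to reach $3d$.
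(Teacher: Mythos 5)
Your proposal is correct and follows essentially the same route as the paper: write $\int_\O \mBV^{-1}B_{jk}^2|u|^2\,\der x=\langle i[P_j,P_k]u,\Psi_{jk}u\rangle$, integrate by parts to put one $P$ on each copy of $u$ plus a term involving $\partial\Psi_{jk}$, estimate using $|\Psi_{jk}|\leq 1$ together with Young's inequality, and sum over $j,k$. The "cleaner route" you settle on at the end --- keeping $\Psi_{jk}=B_{jk}/\mBV$ as a single object so that no residual $\nabla(1/\mBV)$ terms appear --- is exactly what the paper does, and the paper's bookkeeping (each scalar product contributing $\tfrac32\|P_ju\|^2+\tfrac32\|P_ku\|^2+\tfrac12\|(\partial_j\Psi_{jk})u\|^2+\tfrac12\|(\partial_k\Psi_{jk})u\|^2$) yields the constants $3d$ and $1$ after summation, as you anticipated.
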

\begin{proof}
Let $u \in\mathcal{C}^\infty_{0}(\Omega)$ 
and $j,k \in \Ii 1 d := [1,d] \cap \Z$.
Using~\eqref{tensor} and~\eqref{weights}, we have 
\begin{eqnarray*}
  \lefteqn{\int_{\O} \frac {B_{jk}^2}{\mBV} \abs u^2 \, \der x  
  = \big\langle i[P_j,P_k] u,\Psi_{jk}u \big\rangle 
  = \big\langle i P_k u,P_j\Psi_{jk}u \big\rangle 
  - \big\langle i P_j u,P_k\Psi_{jk}u \big\rangle} 
  \\
  &&= \big\langle i P_k u,\Psi_{jk} P_j u \big\rangle 
  - \big\langle i P_j u,\Psi_{jk} P_k u \big\rangle 
  - \big\langle P_k u,(\partial_j\Psi_{jk}) u \big\rangle 
  + \big\langle P_j u,(\partial_k \Psi_{jk}) u \big\rangle 
  \\
  && \leq \frac 32 \nr{P_j u}^2  + \frac 32 \nr{P_k u}^2 
  + \frac 12 \nr{(\partial_j \Psi_{jk}) u}^2 
  + \frac 12 \nr{(\partial_k \Psi_{jk}) u}^2
  \,.
\end{eqnarray*}
We conclude by summing over $j,k \in \Ii 1 d$.
\end{proof}

The second lemma follows elementarily by an integration by parts.
\begin{lemma}\label{lem.loc}
For every $u \in \mathcal{C}^\infty_{0}(\Omega)$ 
and $\chi \in W^{1,\infty}(\O;\R)$, 
we have
\[
\Re \innp{(-i\nabla+\mathbf{A}) u}{(-i\nabla+\mathbf{A}) \chi^2 u} 
= \nr{(-i\nabla+\mathbf{A}) \chi u}^2 - \nr{(\nabla \chi) u}^2\,.
\]
\end{lemma}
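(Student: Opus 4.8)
The plan is to verify Lemma~\ref{lem.loc} by a direct computation, expanding the magnetic gradient acting on the product $\chi^2 u$ and using the Leibniz rule, with the self-adjointness of $P_\ell = -i\partial_\ell + A_\ell$ on $\mathcal C_0^\infty(\Omega)$ as the main algebraic tool. First I would write $(-i\nabla+\mathbf A)(\chi^2 u) = \chi^2 (-i\nabla+\mathbf A) u - i (\nabla \chi^2) u = \chi^2 (-i\nabla+\mathbf A) u - 2i \chi (\nabla\chi) u$, using that $\chi$ is real-valued so that $A_\ell$ passes through the multiplication unchanged and only the $-i\partial_\ell$ part produces the extra term. Substituting this into the inner product $\innp{(-i\nabla+\mathbf A)u}{(-i\nabla+\mathbf A)\chi^2 u}$ and distributing gives two pieces:
\[
  \innp{(-i\nabla+\mathbf A)u}{\chi^2 (-i\nabla+\mathbf A)u}
  + \innp{(-i\nabla+\mathbf A)u}{-2i\chi(\nabla\chi)u}
  \,.
\]

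Next I would take real parts. The first piece equals $\int_\O \chi^2 |(-i\nabla+\mathbf A)u|^2\,\der x = \nr{\chi(-i\nabla+\mathbf A)u}^2$, which is already real; here I use again that $\chi$ is real so that $\chi^2$ can be moved inside the norm. For the second piece I would recognize $\chi(\nabla\chi) = \tfrac12 \nabla(\chi^2)$ and integrate by parts — equivalently, rewrite $-2i\chi(\nabla\chi)u = \chi\,[(-i\nabla+\mathbf A),\chi]\cdot(\text{something})$ or more cleanly compute $\Re\innp{(-i\nabla+\mathbf A)u}{-2i\chi(\nabla\chi)u}$ directly and show it combines with $\nr{\chi(-i\nabla+\mathbf A)u}^2$ to produce $\nr{(-i\nabla+\mathbf A)(\chi u)}^2 - \nr{(\nabla\chi)u}^2$. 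The identity $(-i\nabla+\mathbf A)(\chi u) = \chi(-i\nabla+\mathbf A)u - i(\nabla\chi)u$ yields
\[
  \nr{(-i\nabla+\mathbf A)(\chi u)}^2
  = \nr{\chi(-i\nabla+\mathbf A)u}^2 + \nr{(\nabla\chi)u}^2
  + 2\Re\innp{\chi(-i\nabla+\mathbf A)u}{-i(\nabla\chi)u}
  \,,
\]
and the cross term here is exactly (half of, up to the factor) the cross term appearing above, so the desired equality follows by rearranging.

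The only mild subtlety — and the step I would be most careful with — is the bookkeeping of factors of $i$ and the signs when taking real parts of the cross terms, together with making sure the integration by parts produces no boundary contribution; the latter is immediate since $u \in \mathcal C_0^\infty(\Omega)$ is compactly supported and $\chi \in W^{1,\infty}$, so all integrands are integrable and the manipulations are legitimate. Since $\mathbf A \in \mathcal C^2$ is real-valued, the operator $P_\ell$ is symmetric on $\mathcal C_0^\infty(\Omega)$, and this is really all that is needed; the magnetic potential plays no essential role beyond being real, exactly as in the non-magnetic Leibniz computation. I expect no genuine obstacle here — the lemma is, as the paper says, elementary — so the main point of the write-up is simply to present the algebra cleanly.
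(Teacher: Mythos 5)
Your proposal is correct and is essentially the computation the paper has in mind (the paper gives no details, stating only that the lemma ``follows elementarily by an integration by parts''): expanding $(-i\nabla+\mathbf A)(\chi^2 u)$ and $(-i\nabla+\mathbf A)(\chi u)$ by the Leibniz rule and observing that the cross terms $\Re\innp{(-i\nabla+\mathbf A)u}{-2i\chi(\nabla\chi)u}$ and $2\Re\innp{\chi(-i\nabla+\mathbf A)u}{-i(\nabla\chi)u}$ coincide yields the identity, with no boundary terms since $u$ has compact support. The sign and factor-of-$i$ bookkeeping in your sketch checks out.
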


Now we are in a position to prove Theorem~\ref{theo-minoration}.
\begin{proof}[Proof of Theorem~\ref{theo-minoration}]
Let us consider $u \in\mathcal{C}^\infty_{0}(\Omega)$ 
and $W \in W^{1,\infty}(\O;\R)$. 
Choosing $\chi := e^W$ in Lemma~\ref{lem.loc}, 
we get the identity
\begin{equation} \label{minor-Re-a}
  \Re \big[ Q (u, e^{2W} u) \big] 
  =  \int_{\Omega} V_{1} \abs{e^W u}^2 \, \der x 
  + \nr{(- i \nabla + \mathbf{A}) e^W u}^2 - \nr{(\nabla W) e^W u}^2
  \,.
\end{equation}
Moreover, we have
\begin{align*}
  \Im \big[ Q (u, \Phi e^{2W} u) \big]
  & = \Im \innp{(-i\nabla +  \mathbf{A}) u}{(-i\nabla +  \mathbf{A})(\Phi e^{2W} u)} 
  + \int_{\Omega} \frac {V_{2}^2}{\mBV} \abs{e^W u}^2 \, \der x
  \,.
\end{align*}
The first term of the right-hand side equals
\begin{eqnarray*}
\lefteqn{
\Im \innp{(-i\nabla +  \mathbf{A}) u}{-i (\nabla \Phi + 2 \Phi \nabla W) e^{2W} u)}
}
\\ 
&& = 
\Im \innp{e^W (-i\nabla +  \mathbf{A}) u}{-i (\nabla \Phi + 2 \Phi \nabla W) e^{W} u)}
\\ 
&& = 
\Im \innp{ (-i\nabla +  \mathbf{A})e^W u}{-i (\nabla \Phi + 2 \Phi \nabla W) e^{W} u)}
\,.
\end{eqnarray*}
Consequently, for all $\a \in (0,1)$, we have
\begin{multline*}
\abs{\Im \innp{(-i\nabla +  \mathbf{A}) u}{(-i\nabla +  \mathbf{A})(\Phi e^{2W} u)}}\\
\leq \a \nr{(-i\nabla +  \mathbf{A}) e^W u}^2 + \frac 1 {4\a} \nr{\big( \nabla \Phi + 2 (\nabla W) \Phi \big) e^W u}^2
\end{multline*}
and therefore
\begin{multline} \label{minor-Im-a}
\Im \big[ Q (u, \Phi e^{2W} u) \big] \\
\geq \int_{\Omega} \frac {V_{2}^2}{\mBV} \abs{e^W u}^2 \, \der x 
- \a \nr{(-i\nabla +  \mathbf{A}) e^W u}^2 
- \frac 1 {4\a} \nr{\big( \nabla \Phi + 2 (\nabla W) \Phi \big) e^W u}^2
\,.
\end{multline}
Summing up~\eqref{minor-Re-a} and~\eqref{minor-Im-a},
we deduce
\begin{multline*}
\Re \big[ Q (u, e^{2W} u) \big] 
+ \Im \big[ Q (u, \Phi e^{2W} u) \big] 
\geq (1-\a) \nr{(-i\nabla +  \mathbf{A}) e^W u}^2\\
   + \int_\O \abs{e^W u}^2  \left(\frac {V_{2}^2}{\mBV} + V_{1} 
- \abs{\nabla W}^2 - \frac 1 {2\a} \abs{\nabla \Phi}^2 
- \frac 2 \a \abs{\nabla W}^2 \right) \, \der x
\,.
\end{multline*}
It remains to add the term involving $\abs{\mathbf{B}}^2$. 
By Lemma \ref{lem-BmBV}, we have 
\[
\nr{(-i\nabla +  \mathbf{A}) e^W u}^2  
\geq \frac 1 {3d} \left(
\int_{\O} \frac {\abs{\mathbf{B}}^2}{\mBV} \abs {e^W u}^2 \, \der x 
- \nr{(\nabla \Psi) e^{W} u}^2
\right)
\,.
\]
Thus, for all $\b \in [0,1-\a]$, we get
\begin{multline*}
\Re \big[ Q (u, e^{2W} u) \big] + \Im \big[ Q (u, \Phi e^{2W} u) \big]
\geq (1-\a - \b) \nr{(-i\nabla +  \mathbf{A}) e^W u}^2
\\
   + \int_\O \abs{e^W u}^2  \left(\frac {V_{2}^2 
+ \frac \b {3d} \abs {\mathbf{B}}^2}{\mBV} + V_{1} - \frac {2+\a} \a\abs{\nabla W}^2 
- \frac 1 {2\a} \abs{\nabla \Phi}^2 
- \frac \b {3d} \abs{\nabla \Psi}^2 \right) \, \der x
\,.
\end{multline*}
The proof is concluded by taking $\a=\b=\frac{1}{4}$ 
and adding the contribution related to the shift by~$\mu$.
\end{proof}

With Theorems~\ref{th-lax-milgram} and~\ref{theo-minoration} 
we easily deduce Theorem~\ref{prop.def-op}.
\begin{proof}[Proof of Theorem~\ref{prop.def-op}]
Under Assumption~\ref{a1}, the inequality of Theorem~\ref{theo-minoration}
extends to all $u \in \mathscr{V}$.
Applied with $W = 0$, Theorem~\ref{theo-minoration} then gives, 
for all $u \in \mathscr{V}$,
\begin{multline}\label{eq.coer-Qmu}
\abs{ Q_{\mu} (u, u) } + \abs{ Q_{\mu} (u, \Phi u) }
\geq \frac 12 \nr{(-i\nabla +  \mathbf{A}) u}^2 
\\
+ \int_\O \abs{u}^2  \left(\frac {V_{2}^2 + \frac 1 {12d} \abs B^2}{\mBV}
+ V_{1}-\Re\mu-|\Im\mu|  - 9 (\abs{\nabla \Phi}^2 +  \abs{\nabla \Psi}^2) \right) 
\, \der x\,.
\end{multline}
Using Assumption~\ref{a1}, it implies
\begin{equation}\label{eq.coer-Qmu'}
\begin{aligned}
  \abs{ Q_{\mu} (u, u) } + \abs{ Q_{\mu} (u, \Phi u) }
 & \geq \frac 12 \nr{(-i\nabla +  \mathbf{A})u}^2
\\
 & \quad +\int_{\Omega} \left( \gamma_{1}|V|-\Re\mu-|\Im\mu|-\gamma_{2} \right) |u|^2
  \, \der x \,.
\end{aligned}
\end{equation}
Taking $\mu\in\R$ such that $\mu<-\gamma_{2}$, 
the inequality establishes the coercivity of~$Q_{\mu}$ on~$\mathscr{V}$, 
so it is enough to apply Theorem~\ref{th-lax-milgram} to~$Q_{\mu}$.
\end{proof}

\subsection{Description of the operator domain}
At this moment, we only know that the operator domain of~$\mathscr L$
is given by~\eqref{domain}.
This subsection is devoted to a proof of Theorem~\ref{theo.sep},
which gives a more explicit characterisation of $\Dom(\mathscr L)$.

Let us first state a density result.
\begin{lemma}\label{lem.density}
The set
\begin{equation}\label{core.def}
  \mathcal{D} := \left\{ u \in \mathsf{Dom}(\mathscr L) \, : \
  \supp u \ \text{ is compact in } \overline \Omega \right\} 
\end{equation}
is a core of~$\mathscr L$.
\end{lemma}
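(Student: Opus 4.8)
The goal is to show that $\mathcal{D}$, the set of $u \in \Dom(\mathscr L)$ with compact support in $\overline\Omega$, is a core of $\mathscr L$, i.e.\ dense in $\Dom(\mathscr L)$ for the graph norm $u \mapsto \nr{u} + \nr{\mathscr L u}$. The natural strategy is truncation by smooth cutoffs. First I would fix a function $\chi \in \mathcal{C}^\infty_0(\R^d)$ with $\chi \equiv 1$ on the unit ball and $0 \le \chi \le 1$, set $\chi_n(x) := \chi(x/n)$, and show that for $u \in \Dom(\mathscr L)$ one has $\chi_n u \in \mathcal{D}$ and $\chi_n u \to u$ in graph norm as $n \to \infty$.

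The first point, $\chi_n u \in \Dom(\mathscr L)$, should follow from the variational characterisation~\eqref{domain}: for $v \in \mathscr{V}$, the Leibniz rule gives
\[
  Q(v, \chi_n u) = Q(\chi_n v, u) + \text{(commutator terms)},
\]
where the commutator terms involve $\nabla \chi_n$ and are of the form $\innp{(-i\nabla+\mathbf A)v}{(-i\Delta\chi_n\,\text{or}\,\nabla\chi_n\cdot\nabla) u}$ — more precisely one rewrites everything so that only first derivatives of $u$ (controlled by $u \in \mathscr{V}$) and the bounded functions $\chi_n$, $\nabla\chi_n$, $\Delta\chi_n$ appear, together with the $L^2(\O)$-bounded map $v \mapsto \innp{\mathscr L u}{\chi_n v}$. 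Hence $v \mapsto Q(v,\chi_n u)$ is $L^2(\O)$-continuous, so $\chi_n u \in \Dom(\mathscr L)$; and since $\supp(\chi_n u) \subset \supp\chi_n \cap \overline\O$ is compact in $\overline\O$, indeed $\chi_n u \in \mathcal{D}$. Along the way this computation also yields an explicit formula for $\mathscr L(\chi_n u)$ in terms of $\mathscr L u$, $u$, and the derivatives of $\chi_n$; schematically $\mathscr L(\chi_n u) = \chi_n \mathscr L u - [\,(-i\nabla+\mathbf A)^2, \chi_n\,] u = \chi_n \mathscr L u + (\Delta \chi_n) u + 2 (\nabla\chi_n)\cdot(-i\nabla+\mathbf A) u$, all terms making sense in $L^2(\O)$.

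It then remains to check convergence in graph norm. That $\chi_n u \to u$ in $L^2(\O)$ is dominated convergence. For $\mathscr L(\chi_n u) \to \mathscr L u$, using the formula above: $\chi_n \mathscr L u \to \mathscr L u$ in $L^2$ by dominated convergence, while $\nr{(\Delta\chi_n) u} \le n^{-2}\nr{\Delta\chi}_\infty \nr{u} \to 0$ and $\nr{2(\nabla\chi_n)\cdot(-i\nabla+\mathbf A)u} \le 2 n^{-1}\nr{\nabla\chi}_\infty \nr{(-i\nabla+\mathbf A)u} \to 0$, the latter being finite because $u \in \mathscr{V} \subset H^1_{\mathbf A}(\O)$. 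This gives the claim. \textbf{The main obstacle} I anticipate is the first step — rigorously justifying $\chi_n u \in \Dom(\mathscr L)$ and the commutator identity for $\mathscr L(\chi_n u)$ purely from the weak definition~\eqref{domain}, without a priori regularity of $u$; one must carefully distribute derivatives so that no second derivative of $u$ is ever needed and every term pairs against $v$ (or $\chi_n v$, which is again in $\mathscr{V}$) continuously in the $L^2(\O)$ topology. Everything else is routine cutoff estimates.
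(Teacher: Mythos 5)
Your proposal is correct and follows essentially the same route as the paper: the same cutoff $\varphi(x/n)$, the same commutator identity for $(-i\nabla+\mathbf{A})^2(\varphi_n u)$, and the same scaling estimates $\|\nabla\varphi_n\|_{L^\infty}=O(n^{-1})$, $\|\Delta\varphi_n\|_{L^\infty}=O(n^{-2})$ combined with dominated convergence. The only (minor) difference is the membership step $\varphi_n u\in\Dom(\mathscr L)$: the paper first notes that $Vu\in L^2_{\rm loc}(\overline\Omega)$ by continuity of $V$ on $\overline\Omega$, hence $(-i\nabla+\mathbf{A})^2 u\in L^2_{\rm loc}(\overline\Omega)$, so the truncation visibly has $(-i\nabla+\mathbf{A})^2(\varphi_n u)$ and $V\varphi_n u$ in $L^2(\Omega)$, whereas you re-verify the $L^2$-continuity of the form directly — the same idea in a different dressing.
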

\begin{proof}
From the definition of $\mathsf{Dom}(\mathscr L)$ given by~\eqref{domain}, 
we get that
\begin{equation}\label{subset}
  \mathsf{Dom}(\mathscr L) 
  \subset 
  \{
  u \in \mathscr V \, : \, (-i  \nabla + \mathbf{A})^2 u + V u \in L^2(\Omega)
  \}
  \,.
\end{equation}
Take $u \in \mathsf{Dom}(\mathscr L)$ and notice that 
$V u \in  L^2_{\rm loc}(\overline \Omega)$ 
from our regularity assumption about~$V$, 
thus $(-i  \nabla + \mathbf{A})^2 u \in L^2_{\rm loc}(\overline \Omega)$ as well. 
We define a suitable cut-off, 
see \cite[proof of Thm.~8.2.1]{Davies-1995}. 
Consider a non-negative function $\varphi \in \mathcal{C}^\infty_{0}(\R^d)$ 
such that $\varphi(x) =1$ if $|x|<1$ and $\varphi(x)=0$ if $|x|>2$ and, 
for $u \in \mathsf{Dom}(\mathscr L)$, define, for all $x\in\Omega$ and $n\in\N$,
\begin{equation}
  u_n(x) := u(x) \varphi_n(x)
  \,, \qquad 
  \varphi_n(x):=\varphi \left(\frac xn \right)
  \,.
\end{equation}
Since
\begin{equation}
  (-i  \nabla + \mathbf{A})^2 u_n 
  = \varphi_n (-i  \nabla + \mathbf{A})^2 u 
  - 2 i  \nabla \varphi_n \cdot (-i  \nabla + \mathbf{A}) u 
  - (\Delta \varphi_n) u
  \,,
\end{equation}
we have from the derived regularity of~$u$ 
and the compactness of $\supp \varphi_n$ 
that $\{u_n\}_{n \in \N} \subset \mathcal{D}$. 
Moreover, by the dominated convergence theorem, 
$\|u_n - u\| \to 0$ as $n \to \infty$ and 
\begin{multline*}
  \|[(-i  \nabla + \mathbf{A})^2 + V] u  
  - [(-i  \nabla + \mathbf{A})^2 + V] u_n \|
  \\ 
  \leq 
  \|(1-\varphi_n)[(-i  \nabla + \mathbf{A})^2 + V]u\| 
  +2 \| \nabla \varphi_n \cdot (-i  \nabla + \mathbf{A}) u\|  
  + \|(\Delta \varphi_n) u \| \xrightarrow[n \to \infty]{} 0
  \,,
\end{multline*}
since 
$
  \|\nabla \varphi_n\|_{L^\infty(\R^d)} 
  = n^{-1} \|\nabla \varphi\|_{L^\infty(\R^d)}
$, 
$
  \|\Delta \varphi_n\|_{L^\infty(\R^d)} 
  = n^{-2} \|\Delta \varphi\|_{L^\infty(\R^d)}
$ 
and $u \in \mathscr V$.
\end{proof}

By integrating by parts, we get the following lemma.
\begin{lemma}\label{lem.nabla.A}
For all $u\in\mathcal{D}$ and $\delta>0$, we have
\begin{equation*}
  2 \, \| (-i  \nabla + \mathbf{A}) u\|^2  
  \leq \delta\| (-i  \nabla + \mathbf{A})^2 u\|^2 +\delta^{-1} \|u\|^2
  \,.
\end{equation*}
\end{lemma}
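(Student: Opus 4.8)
The plan is to prove Lemma~\ref{lem.nabla.A} by a straightforward integration by parts, exploiting that $u \in \mathcal{D}$ has compact support in $\overline{\Omega}$ (and belongs to $H^1_{\mathbf{A},0}(\Omega)$), so that no boundary terms arise and the manipulations with the magnetic momentum operator $P_\ell = -i\partial_\ell + A_\ell$ are justified. First I would observe that $P := -i\nabla + \mathbf{A}$ is symmetric on $\mathcal{C}^\infty_0(\Omega)$, and by the density argument underlying Lemma~\ref{lem.density} together with the regularity of $u \in \mathcal{D}$ (which gives $Pu \in L^2$ and $P^2 u \in L^2$), one may write
\[
  \| P u \|^2 = \innp{P u}{P u} = \innp{u}{P^2 u}
  \,,
\]
where the last equality is the integration by parts; strictly speaking one first checks it for $u \in \mathcal{C}^\infty_0(\Omega)$ and then extends by approximating $u \in \mathcal{D}$ in the graph norm of $P^2$ (note $\mathcal{C}^\infty_0$ is dense in $\mathscr{V}$ and the cut-off argument of Lemma~\ref{lem.density} localises without leaving $\mathcal{D}$, so a further mollification stays controlled).

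Next I would apply the Cauchy--Schwarz inequality followed by Young's inequality: for any $\delta > 0$,
\[
  \| P u \|^2 = \innp{u}{P^2 u} \leq \|u\| \, \|P^2 u\|
  \leq \frac{\delta}{2} \|P^2 u\|^2 + \frac{1}{2\delta} \|u\|^2
  \,.
\]
Multiplying by $2$ gives exactly the claimed inequality
\[
  2 \| P u \|^2 \leq \delta \|P^2 u\|^2 + \delta^{-1} \|u\|^2
  \,.
\]
This is the entire content; the estimate is nothing more than the elementary interpolation $\|Pu\|^2 \le \|u\|\,\|P^2u\|$ combined with weighted Young.

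The only genuine point requiring care — and what I expect to be the main (mild) obstacle — is the rigorous justification of the integration by parts $\innp{Pu}{Pu} = \innp{u}{P^2 u}$ for $u \in \mathcal{D}$ rather than for smooth compactly supported $u$. For $u \in \mathcal{C}^\infty_0(\Omega)$ this is immediate. For general $u \in \mathcal{D}$ one knows $u \in \mathscr{V} \subset H^1_{\mathbf{A},0}(\Omega)$, $\supp u$ compact in $\overline{\Omega}$, and $P^2 u \in L^2(\Omega)$; approximating $u$ by a sequence $u_k \in \mathcal{C}^\infty_0(\Omega)$ converging to $u$ in the graph norm of $P^2$ (which is possible because $\mathcal{D}$ sits inside the closure and the compact support plus a Friedrichs mollifier argument preserve both $\|P u_k - Pu\| \to 0$ and $\|P^2 u_k - P^2 u\| \to 0$), one passes to the limit on both sides of the identity. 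Alternatively, since everything is already assembled, one may simply invoke that $\mathcal{C}^\infty_0(\Omega)$ is a core for the relevant magnetic operator on $\mathcal{D}$ and cite the standard integration by parts; the paper's phrase ``By integrating by parts'' signals that this routine step is what is intended, so the write-up can be kept to a few lines.
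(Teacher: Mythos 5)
Your proof is correct and follows essentially the same route as the paper: the identity $\|(-i\nabla+\mathbf{A})u\|^2=\innp{(-i\nabla+\mathbf{A})^2u}{u}$ by integration by parts (justified for $u\in\mathcal{D}$ via compact support and the regularity coming from \eqref{subset}), followed by Cauchy--Schwarz and Young with weight $\delta$. The paper's write-up is just terser about the justification of the integration by parts, which you elaborate on but do not change in substance.
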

\begin{proof}
For every $u\in\mathcal{D}$, we have
$$
  \| (-i  \nabla + \mathbf{A}) u\|^2 
  = \innp{(-i  \nabla + \mathbf{A}) u}{(-i  \nabla + \mathbf{A}) u} 
  = \innp{(-i  \nabla + \mathbf{A})^2 u}{u} 
$$ 
where the second equality employs an integration by parts
using our regularity assumptions about~$V$ and~$\mathbf{A}$,
namely $V u \in  L^2(\Omega)$ with~\eqref{subset}. 
The proof is concluded by applying the Cauchy-Schwarz and Young inequalities.
\end{proof}
In the following Lemma~\ref{lem.B^2-0} and Proposition~\ref{lem.B^2},
we establish estimates on $|\mathbf{B}|u$; the proofs are adaptations of \cite[Lem.~3.4]{AlmogHe15}.
\begin{lemma}\label{lem.B^2-0}
Suppose~\eqref{hyp-nabla}.
There exists $C>0$ such that, for all $u\in\mathcal{D}$,
\begin{equation}\label{Bu.est}
  \big\||\mathbf{B}| u\big\|^2 
  \leq C \left( \| m_{\mathbf{B},V}^\frac 12  
  (-i  \nabla + \mathbf{A}) u\|^2 +\|V u\|^2 
  + \| (-i  \nabla + \mathbf{A})^2 u\|^2+ \|u\|^2 \right) 
  \,.
\end{equation}
\end{lemma}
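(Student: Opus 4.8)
The plan is to imitate the integration-by-parts trick that produced Lemma~\ref{lem-BmBV}, but now carry the full weight $\mBV$ rather than $\mBV^{-1}$ through the computation, so that the left-hand side becomes $\||\mathbf{B}|u\|^2$ rather than $\int \mBV^{-1}|\mathbf{B}|^2|u|^2$. Concretely, I would write, for fixed $j,k\in\Ii 1 d$ and $u\in\mathcal D$ (where by Lemma~\ref{lem.density} and the regularity of $V,\mathbf{A}$ all the manipulations below make sense),
\[
  \int_\O B_{jk}^2\abs u^2\,\der x
  = \big\langle i[P_j,P_k]u,\, B_{jk}u\big\rangle
  = \big\langle iP_k u,\, P_j(B_{jk}u)\big\rangle - \big\langle iP_j u,\, P_k(B_{jk}u)\big\rangle
  \,,
\]
then expand $P_\ell(B_{jk}u) = B_{jk}P_\ell u - i(\partial_\ell B_{jk})u$ and use Cauchy--Schwarz and Young on each of the resulting four terms. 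The terms with a derivative falling on $B_{jk}$ produce $\|(\nabla\mathbf{B})u\|^2$, which by hypothesis~\eqref{hyp-nabla} is dominated: $|\nabla\mathbf{B}| = o(\mBV^{3/2})$ gives $\|(\nabla\mathbf B)u\|^2 \leq \eps\,\|\mBV^{3/4}u\|^2 + C_\eps\|u\|^2$, and $\|\mBV^{3/4}u\|^2 = \int \mBV^{1/2}\cdot\mBV|u|^2$ can be split, via Young, into a small multiple of $\int\mBV^2|u|^2 \lesssim \||\mathbf B|u\|^2 + \|Vu\|^2 + \|u\|^2$ plus a multiple of $\|u\|^2$; the small multiple is absorbed on the left. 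The genuinely new point compared with Lemma~\ref{lem-BmBV} is that the ``good'' terms $\langle iP_k u, B_{jk}P_j u\rangle$ now carry a full power of $\mathbf B$, so one Young-splits them as $\eps\,\|B_{jk}P_j u\|^2 + C_\eps\|P_k u\|^2$, i.e.\ as a small multiple of $\||\mathbf B| P_j u\|^2$ plus $\|(-i\nabla+\mathbf A)u\|^2$.

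To handle the term $\||\mathbf B|(-i\nabla+\mathbf A)u\|^2 = \int |\mathbf B|^2|P u|^2$ that has now appeared, I would write $|\mathbf B|^2 \leq \mBV$ (trivially from the definition of $\mBV$, since $|\mathbf B|^2 \leq 1 + |\mathbf B|^2 + |V|^2 = \mBV$... wait, $\mBV = \sqrt{1+|\mathbf B|^2+|V|^2}$, so $|\mathbf B|^2 \leq \mBV^2$, hence $|\mathbf B| \leq \mBV$), so $\int |\mathbf B|^2|Pu|^2 \leq \int \mBV^2 |Pu|^2$; but that is still too strong. Better: bound $\int |\mathbf B|^2|Pu|^2 \leq \|\,|\mathbf B| Pu\|^2$ and estimate $\|\,|\mathbf B|Pu\|$ directly by a further integration by parts, $\langle |\mathbf B|^2 Pu, Pu\rangle = \langle P(|\mathbf B|^2 u) + \dots\rangle$ — or, more cleanly, interpolate: $\int|\mathbf B|^2|Pu|^2 = \int \mBV^{1/2}\cdot \mBV^{-1/2}|\mathbf B|^2|Pu|^2 \leq \eps\int\mBV^{3/2}|Pu|^2$... this is getting delicate. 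The cleanest route, and the one I would actually pursue, is: estimate $\|\,|\mathbf B|Pu\|^2$ by pushing one factor of $|\mathbf B|$ inside, using $P(|\mathbf B|u) = |\mathbf B|Pu - i(\nabla|\mathbf B|)u$ and $|\nabla|\mathbf B||\le|\nabla\mathbf B| = o(\mBV^{3/2})$; this reduces $\|\,|\mathbf B|Pu\|^2$ to controlling $\langle P(|\mathbf B|u),|\mathbf B|Pu\rangle$ and hence, after another integration by parts throwing $P$ onto the right, to $\|P^2(\text{something})\|$ and $\|\mBV^{3/4}u\|^2$-type terms, which close the loop against the right-hand side of~\eqref{Bu.est} (note $(-i\nabla+\mathbf A)^2u$ appears there precisely to absorb such second-order terms).

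The main obstacle is exactly this bookkeeping of intermediate powers of $\mBV$: one must be sure that every term generated by the integrations by parts is either (a) a small multiple of one of the four quantities on the right of~\eqref{Bu.est} — in particular a small multiple of $\||\mathbf B|u\|^2$ to be absorbed — or (b) controlled by the $o$-growth hypothesis~\eqref{hyp-nabla} in such a way that, after Young, only $\|u\|^2$ and small multiples of the right-hand side remain. The power $3/2$ in~\eqref{hyp-nabla} is what makes this work: $|\nabla\mathbf B|^2 = o(\mBV^3)$ pairs with $|u|^2$ to give $o(\mBV^3)|u|^2$, and $\mBV^3|u|^2 = \mBV\cdot\mBV^2|u|^2$ interpolates between $\mBV^2|u|^2$ (right-hand side, up to $\|Vu\|^2+\||\mathbf B|u\|^2+\|u\|^2$) and $|u|^2$ with a small coefficient. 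I would organise the final estimate by first fixing the absorption parameter $\eps$ small enough relative to the constants coming from $|\mathbf B|^2\le\mBV^2$ and the four-term expansion, then choosing the threshold radius $R$ in the $o$-hypothesis to make the $o(1)$ prefactors smaller than $\eps$, and finally splitting every integral over $\{|x|<R\}$ (where everything is bounded, contributing to $C\|u\|^2$) and $\{|x|\ge R\}$ (where the smallness applies). Summing over $j,k\in\Ii 1d$ at the end produces the stated constant $C$.
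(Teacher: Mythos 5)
The starting point (writing $\|B_{jk}u\|^2 = \Im\langle [P_j,P_k]u, B_{jk}u\rangle$, integrating by parts, and expanding $P_\ell(B_{jk}u) = B_{jk}P_\ell u - i(\partial_\ell B_{jk})u$) matches the paper, and so does your treatment of the $\partial_\ell B_{jk}$ terms via the hypothesis $|\nabla\mathbf B| = o(\mBV^{3/2})$ followed by Young and absorption. Where your plan goes wrong is the ``good'' cross term $\langle B_{jk}P_k u, P_j u\rangle$: you propose to Young-split it as $\eps\|B_{jk}P_j u\|^2 + C_\eps\|P_k u\|^2$, thereby creating $\||\mathbf B|(-i\nabla+\mathbf A)u\|^2$. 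That quantity does not appear on the right-hand side of \eqref{Bu.est}, is not on the left-hand side to be absorbed, and---as you yourself notice---is genuinely hard to control: since $|\mathbf B|^2 \leq \mBV^2$ (not $\leq\mBV$), it is \emph{not} dominated by $\|\mBV^{1/2}(-i\nabla+\mathbf A)u\|^2$, and your fallback (pushing $|\mathbf B|$ through $P$ and integrating by parts again) unavoidably produces second derivatives of $\mathbf B$, which Assumption~\eqref{hyp-nabla} does not control. This is a real gap, not just a bookkeeping issue.

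The paper closes the argument with a much simpler move: instead of Young, apply a \emph{weighted} Cauchy--Schwarz on the cross term, splitting the weight $B_{jk}$ into two half-powers,
\[
  \abs{\big\langle B_{jk} P_k u, P_j u\big\rangle}
  \;\leq\;
  \big\||B_{jk}|^{1/2}P_k u\big\|\;\big\||B_{jk}|^{1/2}P_j u\big\|
  \;\leq\;
  \tfrac 12 \big\||B_{jk}|^{1/2}P_k u\big\|^2 + \tfrac 12 \big\||B_{jk}|^{1/2}P_j u\big\|^2 .
\]
After summing over $j,k$ this is bounded by a constant times $\|\mBV^{1/2}(-i\nabla+\mathbf A)u\|^2$, which is \emph{already} on the right-hand side of \eqref{Bu.est}---no further absorption is needed. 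The remaining terms ($\eps\|\mBV u\|^2$, $\|(-i\nabla+\mathbf A)u\|^2$, $\|u\|^2$) are handled exactly as you describe, using $\|\mBV u\|^2 \lesssim \||\mathbf B|u\|^2 + \|Vu\|^2 + \|u\|^2$ (with the $\eps\||\mathbf B|u\|^2$ part absorbed on the left) and Lemma~\ref{lem.nabla.A}. If you replace your Young step by this half-power Cauchy--Schwarz, your outline becomes the paper's proof.
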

\begin{proof}
Let $u \in \mathcal D$. 
Then $B_{jk} u \in \mathscr V$ and similarly as in Lemma~\ref{lem-BmBV}, 
we have
\begin{equation}\label{B.est.1}
  \|B_{jk} u\|^2 = \Im \langle [P_j,P_k] u, B_{jk} u \rangle
  \leq 
  |\langle P_k u, P_j B_{jk} u  \rangle| + |\langle P_j u, P_k B_{jk} u  \rangle|
\end{equation}
for every $j,k \in \Ii 1 d$.
Further, using the assumption \eqref{hyp-nabla}, we get that, 
for all $\eps_{1}>0$, there exist $C_{\eps_{1}}, \tilde C_{\eps_{1}}>0$ 
such that
\begin{equation}\label{B.est.2}
\begin{aligned}
|\langle P_k u, P_j B_{jk} u  \rangle| &\leq
|\langle B_{jk} P_k u, P_j u \rangle| 
+ |\langle P_k u, u \partial_j B_{jk} \rangle|
\\
&\leq 
\||B_{jk}|^\frac12 P_k u\| \, \||B_{jk}|^\frac12 P_j u\| 
+ \eps_1 \|m_{\mathbf{B},V}^\frac 12 P_k u\| \, \|m_{\mathbf{B},V} u\|
\\
& \quad + C_{\eps_1} \| P_k u\| \, \|u\|
\\
& \leq 
\||B_{jk}|^\frac12 P_k u\| \, \||B_{jk}|^\frac12 P_j u\| + 
\\
& \quad 
+ 
\eps_1 \left(\|m_{\mathbf{B},V}^\frac 12 P_k u\|^2 +  \|m_{\mathbf{B},V} u\|^2 + \| P_k u\|^2   \right) + \tilde C_{\eps_1} \|u\|^2\,.
\end{aligned}
\end{equation}
Summing up over $j$ and $k$, 
we get from~\eqref{B.est.1} and~\eqref{B.est.2} 
that there exists $C_{1}>0$ such that, for all $\eps_1 \in(0,1)$, 
there exists $\hat C_{\eps_{1}}>0$ such that 
\begin{equation*}
\||\mathbf{B}| u\|^2 
\leq C_1 \left( \| m_{\mathbf{B},V}^\frac 12  
(-i  \nabla + \mathbf{A}) u\|^2 + \eps_1 \big(\|m_{\mathbf{B},V}u\|^2 
+ \| (-i  \nabla + \mathbf{A})u\|^2\big) \right)  + \hat C_{\eps_1} \|u\|^2
\,.
\end{equation*}
We now use Lemma~\ref{lem.nabla.A} to get the desired estimate.
\end{proof}
\begin{proposition}\label{lem.B^2}
Suppose~\eqref{hyp-nabla}.
There exists  $C>0$, such that, for all $u\in\mathcal{D}$, we have
\begin{equation}\label{B.12.lem}
\| |\mathbf{B}| u \|^2 + \| m_{\mathbf{B},V}^\frac 12  (-i  \nabla + \mathbf{A}) u\|^2
\leq 
C \left(\|(-i  \nabla + \mathbf{A})^2 u \|^2 + \|Vu\|^2 
+\|u\|^2 \right).
\end{equation}
\end{proposition}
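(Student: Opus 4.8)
The plan is to bootstrap from Lemma~\ref{lem.B^2-0} by absorbing the weighted gradient term $\|m_{\mathbf{B},V}^{\frac12}(-i\nabla+\mathbf{A})u\|^2$ that appears on its right-hand side, so that only $\|(-i\nabla+\mathbf{A})^2 u\|$, $\|Vu\|$ and $\|u\|$ remain. First I would estimate this weighted gradient term by integration by parts: for $u\in\mathcal{D}$,
\[
  \|m_{\mathbf{B},V}^{\frac12}(-i\nabla+\mathbf{A})u\|^2
  = \innp{(-i\nabla+\mathbf{A})u}{m_{\mathbf{B},V}\,(-i\nabla+\mathbf{A})u}
  = \Re\innp{(-i\nabla+\mathbf{A})^2 u}{m_{\mathbf{B},V}\,u}
  + \text{(commutator term)},
\]
where the commutator term comes from $[(-i\nabla+\mathbf{A}),m_{\mathbf{B},V}]$ and hence involves $\nabla m_{\mathbf{B},V}$. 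The first term on the right is bounded, via Cauchy--Schwarz and Young, by $\eps\|m_{\mathbf{B},V}u\|^2 + C_\eps\|(-i\nabla+\mathbf{A})^2 u\|^2$, and since $m_{\mathbf{B},V}^2 = 1+|\mathbf{B}|^2+|V|^2$ we have $\|m_{\mathbf{B},V}u\|^2 \leq \||\mathbf{B}|u\|^2 + \|Vu\|^2 + \|u\|^2$.

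The commutator term is where hypothesis~\eqref{hyp-nabla} does the work: $\nabla m_{\mathbf{B},V}$ is controlled by $(|\nabla\mathbf{B}|+|\nabla V|)\,m_{\mathbf{B},V}^{-1}\cdot m_{\mathbf{B},V}$-type expressions, and~\eqref{hyp-nabla} says $|\nabla\mathbf{B}|+|\nabla V| = o(m_{\mathbf{B},V}^{3/2})$, so $|\nabla m_{\mathbf{B},V}| = o(m_{\mathbf{B},V}^{1/2})\cdot m_{\mathbf{B},V}^{1/2}\cdot$(bounded), i.e. essentially $o(m_{\mathbf{B},V})$ up to lower-order terms. This lets me bound the commutator contribution by $\eta\,\|m_{\mathbf{B},V}^{\frac12}(-i\nabla+\mathbf{A})u\|^2 + \eta\,\|m_{\mathbf{B},V}u\|^2 + C_\eta\|u\|^2$ for arbitrarily small $\eta>0$ (splitting the region $|x|\leq R$, where $\nabla m_{\mathbf{B},V}$ and $m_{\mathbf{B},V}$ are bounded, from $|x|>R$, where the little-$o$ applies). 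Combining, one gets
\[
  \|m_{\mathbf{B},V}^{\frac12}(-i\nabla+\mathbf{A})u\|^2
  \leq \tfrac12\|m_{\mathbf{B},V}^{\frac12}(-i\nabla+\mathbf{A})u\|^2
  + C\big(\||\mathbf{B}|u\|^2 + \|Vu\|^2 + \|(-i\nabla+\mathbf{A})^2 u\|^2 + \|u\|^2\big),
\]
so the weighted gradient term can be absorbed into its own left-hand side. Feeding this back into~\eqref{Bu.est} from Lemma~\ref{lem.B^2-0}, the term $\||\mathbf{B}|u\|^2$ now appears on both sides with a (possibly large) constant $C$ in front on the right, so I would re-run the absorption more carefully: apply~\eqref{Bu.est} with the freedom to make the $\eps_1$-type coefficient in front of $\|m_{\mathbf{B},V}u\|^2$ small, which on the $\||\mathbf{B}|u\|^2$ part gives a small multiple of $\||\mathbf{B}|u\|^2$ that can be moved to the left.

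The main obstacle is the bookkeeping of constants in this double absorption: one must ensure that the constant multiplying $\||\mathbf{B}|u\|^2$ (equivalently $\|m_{\mathbf{B},V}u\|^2$) on the right-hand side, after substituting the gradient estimate into~\eqref{Bu.est}, is strictly less than $1$ so that absorption closes. This is arranged by exploiting the little-$o$ in~\eqref{hyp-nabla}: choosing $R$ large makes the tail contributions come with an arbitrarily small constant, while the bounded region $|x|\leq R$ contributes only to the $\|u\|^2$ term (with an $R$-dependent constant). Once all terms of the form $\||\mathbf{B}|u\|^2$ and $\|m_{\mathbf{B},V}^{\frac12}(-i\nabla+\mathbf{A})u\|^2$ are on the left with positive coefficients and everything else reduces to $\|(-i\nabla+\mathbf{A})^2 u\|^2 + \|Vu\|^2 + \|u\|^2$, we obtain~\eqref{B.12.lem} on the core $\mathcal{D}$; since $\mathcal{D}$ is a core for $\mathscr{L}$ by Lemma~\ref{lem.density}, and the right-hand side controls the graph norm, the estimate extends by density to all of $\Dom(\mathscr{L})$ if needed, though for the stated proposition it suffices on $\mathcal{D}$.
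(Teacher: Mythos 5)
Your strategy coincides with the paper's: integrate by parts on $\|m_{\mathbf{B},V}^{1/2}(-i\nabla+\mathbf A)u\|^2$, use \eqref{hyp-nabla} to control the commutator term involving $\nabla m_{\mathbf{B},V}$, absorb that term into its own left-hand side, and then close a second absorption against \eqref{Bu.est}. Two bookkeeping points should be tightened. First, \eqref{hyp-nabla} yields $|\nabla m_{\mathbf{B},V}|\leq |\nabla\mathbf B|+|\nabla V|=o\big(m_{\mathbf{B},V}^{3/2}\big)$, not $o(m_{\mathbf{B},V})$ as you assert; this weaker bound is nevertheless exactly what your claimed commutator estimate requires, via $m^{3/2}ab\leq\frac12(m\,a^2+m^2b^2)$ together with Lemma~\ref{lem.nabla.A} (or $m\geq1$) for the unweighted remainder, so the conclusion stands even though the intermediate derivation is off. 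Second, the proposed rescue of the final absorption through the $\eps_1$ appearing in the proof of Lemma~\ref{lem.B^2-0} is misdirected: that parameter does not shrink the fixed constant multiplying $\|m_{\mathbf{B},V}^{1/2}(-i\nabla+\mathbf A)u\|^2$ in \eqref{Bu.est}. What must be kept arbitrarily small is the coefficient of $\|m_{\mathbf{B},V}u\|^2$ in your gradient estimate itself --- i.e.\ one should not lump the $(\eps+\eta)\|m_{\mathbf{B},V}u\|^2$ contributions into a generic $C\big(\||\mathbf B|u\|^2+\dots\big)$ --- so that after substitution into \eqref{Bu.est} the term $\||\mathbf B|u\|^2$ reappears on the right with coefficient of order $C(\eps+\eta)<1$. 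Your final paragraph does identify this requirement, so the argument is correct in substance once the intermediate display is restated with the small constants retained; this is precisely the paper's two-step scheme.
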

\begin{proof}
Let us first show that, for all $\eps>0$, 
there exists $C_{\eps}>0$ such that, for all $u \in \mathcal D$,
\begin{equation}\label{m.A.int}
\| m_{\mathbf{B},V}^\frac 12  (-i  \nabla + \mathbf{A}) u\|^2
\leq 
C_{\eps} (\|(-i  \nabla + \mathbf{A})^2 u \|^2 + \|u\|^2) + \eps\|m_{\mathbf{B},V}u\|^2
\,.
\end{equation}
We write
\[
  \| m_{\mathbf{B},V}^\frac 12  
  (-i  \nabla + \mathbf{A}) u\|^2
  =\langle m_{\mathbf{B},V}(-i\nabla+\mathbf{A})u,(-i\nabla+\mathbf{A})u\rangle
  \,,
\]
so that, by an integration by parts,
\begin{equation}\label{eq.a}
\| m_{\mathbf{B},V}^\frac 12  
(-i  \nabla + \mathbf{A}) u\|^2
=\langle (-i\nabla m_{\mathbf{B},V})(-i\nabla+\mathbf{A})u,u\rangle
+\langle m_{\mathbf{B},V}(-i\nabla+\mathbf{A})^2u,u\rangle
\,.
\end{equation}
We have, for all $\eps_1\in(0,1)$,
\begin{equation}\label{eq.b}
| \langle m_{\mathbf{B},V}(-i\nabla+\mathbf{A})^2u,u\rangle|
\leq \frac{\eps_1}{2}\|m_{\mathbf{B},V}u\|^2
+\frac{1}{2\eps_1}\|(-i\nabla+\mathbf{A})^2u\|^2\,.
\end{equation}
Moreover, by using  \eqref{hyp-nabla} and Lemma \ref{lem.nabla.A}, 
for all $\eps_1\in(0,1)$, there exists $C_{\eps_{1}}>0$ such that
\begin{multline}\label{eq.c}
|\langle (-i\nabla m_{\mathbf{B},V})(-i\nabla+\mathbf{A})u,u\rangle|
\leq
\frac{\eps_1}{2}
\left(\|m_{\mathbf{B},V}^\frac 12  
(-i  \nabla + \mathbf{A}) u \|^2+\|m_{\mathbf{B},V}u\|^2
\right)
\\
+C_{\eps_1}\left(\|u\|^2+\|(-i\nabla+\mathbf{A})^2u\|^2\right)\,.
\end{multline}
Using~\eqref{eq.a}, \eqref{eq.b} and~\eqref{eq.c}, we deduce~\eqref{m.A.int}.
Having established~\eqref{m.A.int},
it remains to combine it with Lemma~\ref{lem.B^2-0} 
and choose~$\eps$ sufficiently small.
\end{proof}

Now we are in a position to establish Theorem~\ref{theo.sep}.
\begin{proof}[Proof of Theorem \ref{theo.sep}]
For all $u \in \mathcal{D}$, we have
\begin{equation}\label{mixed.est0}
\begin{aligned}
\|\mathscr L u\|^2  
& = \|(-i  \nabla + \mathbf{A})^2 u\|^2 
+  \|V u \|^2 + 2 \Re \langle (-i  \nabla + \mathbf{A})^2 u, V u \rangle
\\ 
& = \|(-i  \nabla + \mathbf{A})^2 u\|^2 +  \|V u \|^2 
+ 2 \Re \langle (-i  \nabla + \mathbf{A}) u, 
(-i  \nabla + \mathbf{A}) (V u) \rangle
\\  & 
\geq  
\|(-i  \nabla + \mathbf{A})^2 u\|^2 
+  \|V u \|^2 + 2\int_{\Omega}V_{1} |(-i  \nabla + \mathbf{A}) u|^2\, \der x
\\
& \quad
- 2 \langle |(-i  \nabla + \mathbf{A}) u|, |\nabla V| |u| \rangle
\,.
\end{aligned}
\end{equation}
Note that the second step is justified since $V u \in \mathscr V$. 
We proceed by estimating the last term of~\eqref{mixed.est0}. 
Let $\eps\in(0,1)$. There exist $C_{\eps}, \tilde C_{\eps}>0$ such that
\begin{equation}\label{mixed.est}
\begin{aligned}
2 \langle |(-i  \nabla + \mathbf{A}) u|, |\nabla V| |u| \rangle 
& \leq 2 \eps  \langle |(-i  \nabla + \mathbf{A}) u|, 
m_{\mathbf{B},V}^\frac 32 |u| \rangle 
+ 2 C_{\eps} \langle |(-i  \nabla + \mathbf{A}) u|, |u| \rangle
\\
& \leq 2\eps (\| m_{\mathbf{B},V}^\frac 12  
(-i  \nabla + \mathbf{A}) u\|^2 + \| m_{\mathbf{B},V} u \|^2)
+  \tilde C_{\eps} \|u\|^2\,.
\end{aligned}
\end{equation}
From~\eqref{mixed.est0}, \eqref{mixed.est}, \eqref{eq.Im-dom} 
and Lemma~\ref{lem.nabla.A}, we deduce that, for some $\hat C_{\eps}>0$,
\begin{equation}\label{eq.Lu}
\begin{aligned}
\|\mathscr L u\|^2 & \geq (1-2\eps) \left(\|(-i  \nabla + \mathbf{A})^2 u\|^2 
+  \|V u \|^2 \right) - 2\eps \||\mathbf{B}| u\|^2     
\\ & \quad - 3 \eps \|  m_{\mathbf{B},V}^\frac 12  
(-i  \nabla + \mathbf{A}) u\|^2  -\hat C_{\eps}\|u\|^2
\,.
\end{aligned}
\end{equation}
Finally, using Proposition~\ref{lem.B^2}, we get
\begin{equation}\label{eq.Lu.2}
\|\mathscr L u\|^2  \geq (1-2\eps - 3 C \eps ) 
\left(\|(-i  \nabla + \mathbf{A})^2 u\|^2 
+  \|V u \|^2 \right)  - (\hat C_{\eps} + 3 C \eps) \|u\|^2\,.
\end{equation}
The claim follows by the density of $\mathcal D$ in $\mathsf{Dom}({\mathscr L})$, see Lemma~\ref{lem.density}.
\end{proof}

\subsection{On Assumption~\ref{a1}}\label{Sec.Ass}
We conclude this section by establishing 
the sufficient condition of Proposition~\ref{prop.a0}.
Note that Theorem~\ref{prop.def-op} is proved under Assumption~\ref{a1},
while our proof of Theorem~\ref{theo.sep} requires the stronger
hypotheses~\eqref{hyp-nabla} and~\eqref{eq.Im-dom}.

\begin{proof}[Proof of Proposition~\ref{prop.a0}]
The proof follows from the fact that, by \eqref{hyp-nabla},
\[
  |\nabla\Phi(x)|^2+|\nabla\Psi(x)|^2\underset{|x|\to+\infty}=o(\mBV(x))
  \,.
\]
Indeed, using in addition~\eqref{eq.Im-dom}, we may write
\begin{eqnarray*}
\lefteqn{\frac {V_{2}^2 +\frac{1}{12d} \abs{\mathbf{B}}^2}{\mBV} 
+ V_{1} -  9 \left( \abs {\nabla \Phi}^2 + \abs{\nabla \Psi}^2 \right)}
\\
&&\geq \frac{1}{12d}\frac{|V|^2+|\mathbf{B}|^2}{\mBV}
+V_{1}-\frac{V^2_{1}}{\mBV} 
- 9 \left( \abs {\nabla \Phi}^2 + \abs{\nabla \Psi}^2 \right)
\\
&&\geq\frac{1}{12d}\mBV-\frac{1}{12d}+o(\mBV)
\,,
\end{eqnarray*}
which provides~\eqref{eq.a0}.
\end{proof}
%

%----------------------------------------------------------------------%
\section{Discrete spectrum and exponential estimates of eigenfunctions}\label{sec.3}
%----------------------------------------------------------------------%
%
The main objective of this section is to establish 
Proposition~\ref{prop.comp-res}
and Theorems~\ref{prop.discrete} and~\ref{theo.main}.

\subsection{Confining potentials}
In addition to Assumption~\ref{a1},
let us assume that~$V$ is confining in the sense of~\eqref{confine}.
\begin{proof}[Proof of Proposition~\ref{prop.comp-res}]
By Theorem~\ref{prop.def-op}, we already know that the resolvent 
of~$\mathscr{L}$ exists at a point of the complex plane.
Hence, it is enough to show that $\Dom(\mathscr{L})$
is compactly embedded in $L^2(\O)$.
Consider~\eqref{eq.coer-Qmu'} with $\mu=0$.  
By the definition of~$\mathscr{L}$ given in~\eqref{operator}
and the Cauchy-Schwarz inequality, 
we get
\[
  \forall u\in\mathsf{Dom}(\mathscr{L})
  \,,\qquad\int_{\Omega} (\gamma_{1}|V|-\gamma_{2})|u|^2\, \der x
  \leq 2\|\mathscr{L} u\|\|u\|\leq \|\mathscr{L} u\|^2+\|u\|^2
  =\|u\|^2_{\mathscr{L}}
\]
for all $u\in\mathsf{Dom}(\mathscr{L})$.
Moreover, we have $\mathsf{Dom}(\mathscr{L})\subset H^2_{\mathsf{loc}}(\Omega)$. 
Thus, by the Riesz-Fr\'echet-Kolmogorov criterion, 
the unit ball for the graph norm of~$\mathscr{L}$ is precompact in $L^2(\Omega)$ 
and thus~$\mathscr{L}$ is an operator with compact resolvent.
\end{proof}

\subsection{General potentials}
Now let us assume only Assumption~\ref{a1}.

\begin{proof}[Proof of Theorem \ref{prop.discrete}]
The inclusion~\eqref{dis1} is again a consequence of~\eqref{eq.coer-Qmu'} 
and Theorem~\ref{th-lax-milgram}. 
It is sufficient to prove~\eqref{dis2}. 
Let
$
  \mu\in\rho_{\gamma_{2}-\gamma_{1}\check V_{\infty}}
$. 
Of course, if $\mu\in\rho_{\gamma_{2}}$ there is nothing to prove. 
Let us define $R>0$ such that, 
\begin{equation}\label{eq.R}
  \forall |x| \geq R 
  \,, \qquad
  |V(x)|\geq \check V_{\infty}
  \,. 
\end{equation}
Then, we have 
\begin{equation} \label{eq-R-g}
  \gamma_{1}|V(x)|-\Re\mu-|\Im\mu|-\gamma_{2}\geq\gamma_{1}\check V_{\infty}
  -\Re\mu-|\Im\mu|-\gamma_{2}=:\gamma>0
  \,.
\end{equation}
for all $|x| \geq R$.
Let us introduce a real-valued smooth function 
with compact support $0\leq \chi\leq 1$ 
such that $\chi(x)=0$ for all $|x| \geq 2R$ 
and $\chi(x)=1$ for all $|x| \leq R$.  
We define
\[
  M:=\left|\gamma_{2}+\Re\mu+|\Im\mu|\right|+1\in[1,+\infty)\,.
\]
Let us write
\[\mathscr{L}-\mu=\mathscr{L}+M\chi-\mu-M\chi\,.\]
We introduce the (closed) operator $\widetilde{\mathscr{L}}:=\mathscr{L}+M\chi$ and, for $\mu \in \C$, the corresponding shifted form $\widetilde Q_\mu := Q_\mu + M \chi$.

Let us explain why $\widetilde{\mathscr{L}}-\mu$ is invertible. 
For that purpose, we recall that by Theorem~\ref{theo-minoration} (with $W=0$) 
and Assumption~\ref{a1}, we have, for all $u\in\mathscr{V}$,
\begin{multline*}
\Re \big[ \widetilde{Q}_{\mu} (u, u) \big] 
+ \Im \big[ \widetilde{Q}_{\mu} (u, \Phi u) \big]
\\
\geq \frac 12 \nr{(-i\nabla +  \mathbf{A}) u}^2
+\int_{\Omega}\big(M\chi+\gamma_{1}|V|-\gamma_{2}-\Re\mu-|\Im\mu|\big)|u|^2
\,\der x
\,.
\end{multline*}
By the definitions of~$M$ and~$\gamma$, we deduce that 
\[
  \Re \big[ \widetilde{Q}_{\mu} (u, u) \big] 
  + \Im \big[ \widetilde{Q}_{\mu} (u, \Phi u) \big]
  \geq \frac 12 \nr{(-i\nabla +  \mathbf{A}) u}^2
  +\min(1,\gamma)\nr u^2\,.
\]
This proves the coercivity of $\widetilde{Q}_{\mu}$ on $\mathscr{V}$ and thus, by Theorem \ref{th-lax-milgram}, $\widetilde{\mathscr{L}}-\mu$ is invertible.

Now, the multiplication operator~$M\chi$ is a relatively compact perturbation 
of~$\widetilde{\mathscr{L}}-\mu$. Therefore, by Lemma~\ref{lem.rel.comp}, 
$\mathscr{L}-\mu$ is a Fredholm operator with index~$0$. 
From Lemma~\ref{lem.gru}, we deduce that the spectrum in 
$\rho_{\gamma_{2}-\gamma_{1}\check V_{\infty}}$ is discrete 
(that is, made of isolated eigenvalues
of finite algebraic multiplicity, 
see Appendix~\ref{sec.A}).
\end{proof}

\subsection{Agmon-type estimates}
Theorem~\ref{theo.main} is essentially a consequence of the following proposition
about properties of solutions of an inhomogeneous equation in a weighted space.
\begin{proposition}\label{prop.inhomogen}
Let 
$
  \lambda \in 
  \mathsf{sp}(\mathscr{L})\cap\rho_{\gamma_{2}-\gamma_{1}\check V_{\infty}}\neq\emptyset
$. 
Let us consider $\psi_0 \in L^2(\O)$ such that 
\begin{equation} \label{eq-agmon-psi0}
e^{\frac {1-\eps} 3 \, \der_{\mathsf{Ag}}(x)} \psi_0 \in L^2(\O)
\end{equation}
for some $\eps \in (0,1)$
and assume that $\psi \in \mathsf{Dom}(\mathscr{L})$ satisfies 
\begin{equation}\label{eq.inhomogen}
\mathscr{L} \psi = \lambda\psi + \psi_0\,.
\end{equation}
Then
\begin{equation} \label{eq-agmon-psi}
e^{\frac {1-\eps} 3 \, \der_{\mathsf{Ag}}(x)} \psi \in L^2(\O)\,.
\end{equation}
\end{proposition}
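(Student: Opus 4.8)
The plan is to prove Proposition~\ref{prop.inhomogen} by a weighted energy (Agmon) argument, using the weighted coercivity estimate of Theorem~\ref{theo-minoration} with the weight~$W$ chosen to be (a bounded truncation of) a multiple of the Agmon distance~$\der_{\mathsf{Ag}}$. First I would introduce, for $n\in\N$, the Lipschitz weight
\[
  W_n := \frac{1-\eps}{3}\min\bigl(\der_{\mathsf{Ag}},n\bigr)
  \,,
\]
so that $W_n\in W^{1,\infty}(\Omega;\R)$ with $\abs{\nabla W_n}^2\le \bigl(\tfrac{1-\eps}{3}\bigr)^2\bigl(\gamma_1\abs V-\Re\lambda-\abs{\Im\lambda}-\gamma_2\bigr)_+$ by~\eqref{eq.Agmon}, and $e^{W_n}$ is bounded. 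Applying Theorem~\ref{theo-minoration} to $Q_\lambda$ with this~$W=W_n$ (extended from $\mathcal{C}_0^\infty$ to $\mathscr V$, and further to $e^{W_n}\psi$ by the density argument of Lemma~\ref{lem.density}, which is legitimate since $W_n$ is bounded so $e^{W_n}\psi\in\mathscr V$), the left-hand side becomes $\Re\bigl[Q_\lambda(\psi,e^{2W_n}\psi)\bigr]+\Im\bigl[Q_\lambda(\psi,\Phi e^{2W_n}\psi)\bigr]$, which by~\eqref{eq.inhomogen} equals $\Re\innp{\psi_0}{e^{2W_n}\psi}+\Im\innp{\psi_0}{\Phi e^{2W_n}\psi}$; this is controlled by $C\nr{e^{W_n}\psi_0}\,\nr{e^{W_n}\psi}$ since~$\Phi$ is bounded, and $e^{W_n}\psi_0\in L^2$ uniformly in~$n$ by~\eqref{eq-agmon-psi0}.

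Next I would extract from the right-hand side the key positivity. The bracket integrand in Theorem~\ref{theo-minoration}, using Assumption~\ref{a1} in the form~\eqref{eq.coer-Qmu'} with $\mu=\lambda$, dominates
\[
  \gamma_1\abs V-\Re\lambda-\abs{\Im\lambda}-\gamma_2-9\abs{\nabla W_n}^2
  \;\ge\;
  \Bigl(1-\tfrac{(1-\eps)^2}{\ \ }\Bigr)\bigl(\gamma_1\abs V-\Re\lambda-\abs{\Im\lambda}-\gamma_2\bigr)_+ \;-\;(\text{bounded}),
\]
wait --- here one must be careful with the constant $9$ versus $\tfrac13$: since $9\cdot\bigl(\tfrac{1-\eps}{3}\bigr)^2=(1-\eps)^2<1$, the coefficient in front of $\bigl(\gamma_1\abs V-\Re\lambda-\abs{\Im\lambda}-\gamma_2\bigr)_+$ is $1-(1-\eps)^2>0$, which is exactly why the power $\tfrac13$ (rather than, say, $1$) appears in the exponent of~\eqref{eq-agmon-psi} and why one loses the~$\eps$. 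Thus
\[
  c_\eps\int_\Omega \abs{e^{W_n}\psi}^2\bigl(\gamma_1\abs V-\Re\lambda-\abs{\Im\lambda}-\gamma_2\bigr)_+\der x
  \;\le\; C\nr{e^{W_n}\psi_0}\,\nr{e^{W_n}\psi}+C'\nr{e^{W_n}\psi}^2
\]
for some $c_\eps>0$; the issue is that the right-hand side still contains $\nr{e^{W_n}\psi}^2$, which is not yet known to be bounded. I would close the estimate by splitting $\Omega$ into the region where $\gamma_1\abs V-\Re\lambda-\abs{\Im\lambda}-\gamma_2\ge K$ (for $K$ large, chosen after $c_\eps,C'$) and its complement: on the first region the term $c_\eps K\nr{e^{W_n}\psi}^2_{\text{there}}$ absorbs the bad term, while on the complement $W_n$ is bounded independently of~$n$ (since $\der_{\mathsf{Ag}}$ is bounded where the integrand of~\eqref{eq.Agmon} is bounded), so $\nr{e^{W_n}\psi}_{\text{complement}}\le C\nr\psi$. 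This yields a bound on $\nr{e^{W_n}\psi}$ uniform in~$n$, after an elementary Young's inequality absorption of the cross term.

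Finally, letting $n\to\infty$ and invoking the monotone convergence theorem (since $W_n\uparrow\tfrac{1-\eps}{3}\der_{\mathsf{Ag}}$ pointwise) gives $e^{\frac{1-\eps}{3}\der_{\mathsf{Ag}}}\psi\in L^2(\Omega)$, which is~\eqref{eq-agmon-psi}. The main obstacle I anticipate is the bookkeeping in this last absorption step: one must verify that the constants $c_\eps$, $C$, $C'$ coming out of Theorem~\ref{theo-minoration} and Assumption~\ref{a1} are genuinely independent of~$n$ (they are, since only $\nr{\nabla W_n}_\infty$-type quantities enter and those are controlled by~\eqref{eq.Agmon} uniformly), and that the region $\{\gamma_1\abs V-\Re\lambda-\abs{\Im\lambda}-\gamma_2< K\}$ really is a region where $\der_{\mathsf{Ag}}$ stays bounded --- this is where one uses that $\der_{\mathsf{Ag}}$ is the Riemannian distance for a metric supported (up to the $(\cdot)_+$) on $\{\gamma_1\abs V-\Re\lambda-\abs{\Im\lambda}-\gamma_2>0\}$, so geodesics need not cross the large-$\abs V$ region to reach points of the small region. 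A secondary technical point is justifying the extension of Theorem~\ref{theo-minoration} from $\mathcal{C}_0^\infty(\Omega)$ to the relevant subspace of $\mathscr V$ containing $\psi$ and $e^{2W_n}\psi$, which follows from the density of $\mathcal D$ (Lemma~\ref{lem.density}) together with the boundedness of $e^{W_n}$ and the regularity $\Dom(\mathscr L)\subset H^2_{\mathsf{loc}}$ already used above.
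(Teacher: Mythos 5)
Your overall strategy is the paper's: apply the weighted coercivity of Theorem~\ref{theo-minoration} with a truncated multiple of $\der_{\mathsf{Ag}}$ as the weight, use \eqref{eq.inhomogen} and the boundedness of~$\Phi$ to control the left-hand side by $\nr{e^{W_n}\psi_0}\nr{e^{W_n}\psi}$, absorb, and pass to the limit (the paper uses a tent-shaped truncation $\chi_n$ and Fatou where you use $\min(\der_{\mathsf{Ag}},n)$ and monotone convergence; your bookkeeping of the margin $1-(1-\eps)^2>0$ with $\eta=\frac{1-\eps}{3}$ is fine and in fact matches the stated weight more cleanly than the paper's $\eta=\frac{\sqrt{1-\eps}}{3}$). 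The genuine gap is in your absorption step. Writing $g:=\gamma_1\abs V-\Re\lambda-\abs{\Im\lambda}-\gamma_2$, you split $\Omega$ by the level set $\{g<K\}$ with $K$ \emph{large} (chosen after $c_\eps$ and $C'$) and claim $W_n$ is bounded there uniformly in~$n$ because ``$\der_{\mathsf{Ag}}$ is bounded where the integrand of \eqref{eq.Agmon} is bounded.'' This is false: $\der_{\mathsf{Ag}}$ is a distance \emph{from a fixed base point}, so it accumulates along paths and is large at distant points even where $g_+$ is small or zero there; any path from the base point to such a point must cross the region where $g\geq\gamma>0$. Concretely, if $\abs V$ is bounded (which is allowed, since only $V_\infty>0$ is needed), then $g$ is bounded, $\{g<K\}=\Omega$ for large~$K$, and $\der_{\mathsf{Ag}}(x)\geq\gamma\abs x-M$ by Remark~\ref{rem.Agm}, so $\sup_{\{g<K\}}W_n=\eta n\to\infty$. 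Your ``geodesics need not cross the large-$\abs V$ region'' justification has the logic backwards. You are thus caught between two incompatible requirements: $K$ large enough to dominate $C'$, versus $K$ small enough that $\{g<K\}$ is a bounded set.

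The repair is to notice that the uncontrolled remainder is \emph{not} a global $C'\nr{e^{W_n}\psi}^2$: after using $9\abs{\nabla W_n}^2\leq(1-\eps)^2 g_+$, the integrand $g-9\abs{\nabla W_n}^2$ is bounded below by $(1-(1-\eps)^2)\,g\geq 0$ on $\{g\geq0\}$ and equals $g\geq-\Re\lambda-\abs{\Im\lambda}-\gamma_2$ on $\{g<0\}$, and by \eqref{eq-R-g} the set $\{g<\gamma\}$ (a fortiori $\{g<0\}$) is contained in the \emph{Euclidean ball} $\{\abs x<R\}$ of \eqref{eq.R} -- this is exactly where the hypothesis $\lambda\in\rho_{\gamma_2-\gamma_1\check V_\infty}$ enters. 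So one should split by $\{\abs x\lessgtr R\}$, as the paper does: on the bounded set $\{\abs x<R\}\cap\Omega$ the quantity $\der_{\mathsf{Ag}}$, hence $W_n$, is bounded uniformly in~$n$ and all bad terms are $\leq C\nr\psi^2$; on $\{\abs x>R\}$ one has the coercive term $\eps\gamma\int_{\{\abs x>R\}}\abs{e^{W_n}\psi}^2$ on the left, and the Young-inequality piece $\tfrac{\eps\gamma}{2}\nr{e^{W_n}\psi}^2$ of the cross term is again split into the two regions and absorbed. With that substitution your proof closes and coincides with the paper's.
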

\begin{proof}
By Theorem \ref{prop.discrete}, 
$\lambda$~is an eigenvalue of finite algebraic multiplicity. 
Given $W \in W^{1,\infty}(\O;\R)$, we have
\begin{align*}
  \Re Q(\psi,e^{2W}\psi)
  &=\Re(\lambda) \|e^{W}\psi\|^2 
  + \Re \innp{e^W \psi_0}{e^W \psi} 
  \,,
  \\
  \Im Q(\psi,\Phi e^{2W}\psi)
  &=\Im(\lambda) \int_{\Omega}\Phi e^{2W}|\psi|^2\,\der x 
  + \Im \innp{e^W \psi_0}{\Phi e^W \psi}
  \,.
\end{align*}
By Theorem~\ref{theo-minoration} (with $\mu=0$) and Assumption~\ref{a1},
\[
  \big( \Re(\lambda) + \abs{\Im(\lambda)} \big) \nr{e^W \psi}^2 
  \geq \int_\O  \left(\gamma_{1}|V|-\gamma_{2}-9 \abs{\nabla W}^2 \right)
  \abs{e^W \psi}^2 \, \der x - \nr{e^W \psi_0}\nr{e^W \psi}
  \,.
\]
Thus, we get
\[
\int_\O  \left(\gamma_{1}|V|- \Re(\lambda) 
- \abs{\Im(\lambda)}-\gamma_{2}-9 \abs{\nabla W}^2 \right)\abs{e^W \psi}^2 \, \der x
\leq  \nr{e^W \psi_0}\nr{e^W \psi}\,.
\]
Let $R$ be as in \eqref{eq.R}. Splitting the integral into two parts, we get
\begin{multline*}
\int_{\{|x|>R\}}  
\left(
\gamma_{1}|V|- \Re(\lambda) - \abs{\Im(\lambda)}-\gamma_{2}-9 \abs{\nabla W}^2 
\right)\abs{e^W \psi}^2 \, \der x
\\
\leq \int_{\{|x|<R\}}  
\left(
-\gamma_{1}|V|+ \Re(\lambda) + \abs{\Im(\lambda)}+\gamma_{2}+9 \abs{\nabla W}^2
 \right)\abs{e^W \psi}^2 \, \der x + \nr{e^W \psi_0}\nr{e^W \psi}
\,,
\end{multline*}
so that, for some $C > 0$, we have by~\eqref{eq.Agmon},
\begin{multline}\label{eq.split}
\int_{\{|x|>R\}}  
\left( 
\abs{\nabla \der_{\mathsf{Ag}}(x)}^2 - 9 \abs{\nabla W}^2\right) \abs{e^W \psi}^2 
\, \der x
\\
\leq  \int_{\{|x|<R\}}  
\left(C + 9 \abs{\nabla W}^2 \right)\abs{e^W \psi}^2 \, \der x
+ \nr{e^W \psi_0}\nr{e^W \psi}
\,.
\end{multline}
We set $\eta := \frac {\sqrt{1-\eps}} 3$ and we consider the functions $(\chi_{n})_{n\geq 1}$ defined as follows
\[\chi_{n}(s):=\begin{cases}
s& \mbox{for } 0\leq s\leq n\,,\\
2n-s& \mbox{for } n\leq s\leq 2n\,,\\
0& \mbox{for } s\geq 2n\,.
\end{cases}\]
Note that $|\chi'_{n}(s)|=1$ a.e. on $[0, 2n]$ and $|\chi'_{n}(s)|=0$ for $s> 2n$.

Then for $n\geq 1$ and $x \in \O$ we set
\[
  W_{n}(x) := \eta \, \chi_{n} (\der_{\mathsf{Ag}}(x))\,.
\]
We have
\[
\nabla W_{n}(x)=\eta \, \chi'_{n} (\der_{\mathsf{Ag}}(x))\nabla \der_{\mathsf{Ag}}(x)
\]
and 
\[
|\nabla W_{n}(x)|^2
\leq\eta^2 |\nabla \der_{\mathsf{Ag}}(x)|^2 = \frac {1-\eps} 9 \abs{\nabla \der_{\mathsf{Ag}}(x)}^2 \,.
\]
By \eqref{eq.split} we obtain that there exists $C > 0$ such that, for all $n\geq 1$,
\[
\int_{\{|x| > R\}}  
\eps \abs{\nabla \der_{\mathsf{Ag}}(x)}^2 \abs{e^{W_{n}} \psi}^2 
\, \der x\leq C\|\psi\|^2 + \nr{e^{W_n}\psi} \nr{e^{W_n}\psi_0}
\,,
\]
and therefore, by \eqref{eq-R-g},
\[
\int_{\{|x| > R\}}  
\eps\gamma \abs{e^{W_{n}} \psi}^2 \, \der x 
\leq C\|\psi\|^2 
+ \frac {\eps \gamma} 2 \nr{e^{W_n}\psi}^2 
+ \frac 1 {2 \eps \gamma} \nr{e^{W_n}\psi_0}^2\,.
\]
For another constant $C>0$ independent of~$n$, 
we get
\[
  \int_{\Omega} \abs{e^{W_{n}} \psi}^2 \, \der x
  \leq C\|\psi\|^2 + C\nr{e^{W_n}\psi_0}^2
  \,.
\]
It remains to take the limit $n\to+\infty$ 
and use the Fatou lemma to conclude.
\end{proof}

Now we are in a position to prove the main result of this paper.
\begin{proof}[Proof of Theorem~\ref{theo.main}]
If $\psi \in \Ker(\mathscr{L}-\lambda)$, we apply Proposition \ref{prop.inhomogen} with $\psi_0 = 0$ to deduce that $\psi$ satisfies \eqref{eq.est-Agmon}. 

Let us now explain why this conclusion holds also for the algebraic eigenspace 
(see Appendix~\ref{sec.A}).
Let us consider $\psi$ in this space.

We have
\[
  (\mathscr{L}-\lambda)^r\psi=0
  \qquad\mbox{with}\qquad
  r:=\dim\Ran(P_{\lambda})\geq 1
  \,.
\]
Now, we proceed by induction. Consider $k \in \Ii 1 r$ and assume that 
\[
  (\mathscr{L}-\lambda)^k\psi
  \in L^2\left(\Omega, e^{\frac {1-\eps} 3 \der_{\mathsf{Ag}}(x)}\,\der x\right)
  \,.
\]
Then, we write
\[
  (\mathscr{L}-\lambda)\left\{(\mathscr{L}-\lambda)^{k-1}\psi\right\}
  =(\mathscr{L}-\lambda)^k\psi
  \,.
\]
We are in the situation \eqref{eq.inhomogen} and we deduce that
\[
  (\mathscr{L}-\lambda)^{k-1}\psi
  \in L^2\left(\Omega, e^{\frac {1-\eps} 3 d_{\mathsf{Ag}}(x)}\,\der x\right)
  \,.
\]
This concludes the proof.
\end{proof}

\appendix
%-------------------------------------%
\section{Reminders of spectral theory}\label{sec.A}
%-------------------------------------%
%
Since spectral theory of non-self-adjoint operators 
is less unified than its self-adjoint sister,
in this appendix we collect some notions 
used throughout the paper.
We refer to standard monographs \cite{Kato}, \cite[Chap.~I.3,~IX]{Edmunds-Evans} and \cite[Chap.~XVII]{Gohberg-1990}
or a recent summary~\cite{KS-book}
for a more comprehensive exposition.

Let~$\mathsf{H}$ be a Hilbert space.
An operator $\mathscr{M}:\mathsf{Dom}(\mathscr{M})\to\mathsf{H}$ 
is said to be \emph{Fredholm} when
$\Ker(\mathscr{M})$ finite-dimensional
and $\Ran(\mathscr{M})$ is closed with finite codimension.
Then the \emph{index} of~$\mathscr{M}$ is defined by
$
  \ind(\mathscr{M})
  :=\dim\Ker(\mathscr{M}) - \codim\Ran(\mathscr{M})
$.
When $\mathsf{Dom}(\mathscr{M})$ is dense in~$\mathsf{H}$, 
we may classically define the adjoint $\mathscr{M}^*$ of $\mathscr{M}$ 
and then we have 
$
  \dim\Ker(\mathscr{M}^*) = \codim\Ran(\mathscr{M})
$.
We denote by $\mathsf{Fred}_{0}(\mathscr{M})$ the set of 
all complex numbers~$\lambda$ such that $\mathscr{M}-\lambda$ 
is a Fredholm operator with index~$0$.
 
Let~$\mathscr{M}$ be an arbitrary closed operator in~$\mathsf{H}$.
The \emph{spectrum} $\mathsf{sp}(\mathscr{M})$
is defined as the set of all complex numbers~$\lambda$ 
such that $\mathscr{M}-\lambda$ is not bijective
as an operator from $\Dom(\mathscr{M})$ to~$\mathsf{H}$. 
The \emph{resolvent set} $\rho(\mathscr{M})$ is the complement 
of the spectrum in the complex plane.
We call the intersection
$
  \mathsf{sp}_{\mathsf{fre}}(\mathscr{M})
  := \mathsf{sp}(\mathscr{M}) \cap \mathsf{Fred}_{0}(\mathscr{M})
$
the \emph{Fredholm spectrum} and define the \emph{essential spectrum}
by the complement  
$
  \mathsf{sp}_{\mathsf{ess}}(\mathscr{M})
  := \mathsf{sp}(\mathscr{M})\setminus\mathsf{sp}_{\mathsf{fre}}(\mathscr{M})
$
(it is the essential spectrum due to Schechter
denoted by $\mathsf{sp}_{\mathsf{e}4}(\mathscr{M})$ in~\cite{Edmunds-Evans}).
Finally, we define the \emph{discrete spectrum} $\mathsf{sp}_{\mathsf{dis}}(\mathscr{M})$
to be the set of all isolated eigenvalues~$\lambda$ 
for which the \emph{algebraic} (or \emph{root}) eigenspace 
$
  \cup_{k=1}^\infty \Ker([\mathscr{M}-\lambda]^k)
$
is finite-dimensional and such that 
$\mathscr{M}-\lambda$ has a closed range.
The elements of $\mathsf{sp}_{\mathsf{dis}}(\mathscr{M})$
are called the \emph{discrete eigenvalues} of~$\mathscr{M}$.

Let~$\lambda$ be an isolated eigenvalue of~$\mathscr{M}$.
Another characterisation of~$\lambda$ to belong to the discrete spectrum 
is through the \emph{eigenprojection} 
\begin{equation}\label{projection}
  P_{\lambda}:=\frac{1}{2i\pi}\int_{\Gamma_{\lambda}}(z-\mathscr{M})^{-1}\,\der z
  \,,
\end{equation}
where $\Gamma_{\lambda}$ is a contour that enlaces only~$\lambda$
as an element of the spectrum.
$P_{\lambda} : \mathsf{H}\to\mathsf{Dom}(\mathscr{M})\subset\mathsf{H}$ 
is a bounded operator which commutes with~$\mathscr{M}$ 
and does not depend on~$\Gamma_{\lambda}$.
We say that~$\lambda$ has \emph{finite algebraic multiplicity} 
when the range of~$P_{\lambda}$ is finite-dimensional. 
In this case, $\lambda$~is a discrete eigenvalue of~$\mathscr{M}$.
Moreover, the range of~$P_{\lambda}$ coincides with 
the algebraic eigenspace of~$\lambda$.
It is an invariant subspace of~$\mathscr{M}$ of finite dimension 
and such that the spectrum of $\mathscr{M}_{|\Ran(P_{\lambda})}$ equals $\{\lambda\}$.

Finally, we recall three standard results. For the proofs see \cite[Chap.~I.3]{Edmunds-Evans}, \cite[Thm.~IX.2.1]{Edmunds-Evans} and \cite[Thm.~XVII.2.1]{Gohberg-1990}, respectively.

\begin{lemma}
Let $(\mathscr{M},\mathsf{Dom}(\mathscr{M}))$ be a closed operator 
in a Hilbert space~$\mathsf{H}$.
Let us equip $\mathsf{Dom}(\mathscr{M})$ with the graph norm $\|\cdot\|_{\mathscr{M}}$,
which makes $(\mathsf{Dom}(\mathscr{M}),\|\cdot\|_{\mathscr{M}})$ a new Hilbert space.
Let~$\mathcal{M}$ be the operator~$\mathscr{M}$ reconsidered
as an operator from $(\mathsf{Dom}(\mathscr{M}),\|\cdot\|_{\mathscr{M}})$ to~$\mathsf{H}$.
The following properties hold:
\begin{enumerate} 
\item[\emph{(i)}] $\mathcal{M}$ is bounded,
\item[\emph{(ii)}] $\mathcal{M}$ is Fredholm 
if and only if~$\mathscr{M}$ is Fredholm.
In this case, $\ind(\mathcal{M})=\ind(\mathscr{M})$.
\end{enumerate}
\end{lemma}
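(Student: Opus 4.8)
The plan is to treat the two claims in turn; both reduce to the observation that $\mathcal{M}$ and $\mathscr{M}$ act by the same rule and have, as sets, the same kernel and the same range, the only difference being the norm placed on the domain.

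First I would dispose of (i). Since $\mathscr{M}$ is closed, $(\mathsf{Dom}(\mathscr{M}),\|\cdot\|_{\mathscr{M}})$ is complete, hence a genuine Hilbert space, and $\mathcal{M}$ is defined on all of it. Boundedness is then immediate from
\[
  \|\mathcal{M}u\|_{\mathsf{H}}^2 = \|\mathscr{M}u\|_{\mathsf{H}}^2 \leq \|u\|_{\mathsf{H}}^2 + \|\mathscr{M}u\|_{\mathsf{H}}^2 = \|u\|_{\mathscr{M}}^2 ,
\]
so $\mathcal{M}$ has operator norm at most~$1$.

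For (ii), the key point is that $\Ker(\mathcal{M}) = \Ker(\mathscr{M})$ and $\Ran(\mathcal{M}) = \Ran(\mathscr{M})$ as equalities of sets: the first is a subspace of $\mathsf{Dom}(\mathscr{M})$, the second a subspace of~$\mathsf{H}$, and in each case nothing changes but the topology on the ambient space. Finite-dimensionality of a subspace is a purely algebraic property, so $\Ker(\mathcal{M})$ is finite-dimensional if and only if $\Ker(\mathscr{M})$ is, with the same dimension; likewise $\codim\Ran(\mathcal{M})$ and $\codim\Ran(\mathscr{M})$ (both computed inside~$\mathsf{H}$) agree, and—since the topology on~$\mathsf{H}$ is untouched—$\Ran(\mathcal{M})$ is closed in~$\mathsf{H}$ if and only if $\Ran(\mathscr{M})$ is. Combining the three facts gives at once that $\mathcal{M}$ is Fredholm if and only if $\mathscr{M}$ is, and in that case
\[
  \ind(\mathcal{M}) = \dim\Ker(\mathcal{M}) - \codim\Ran(\mathcal{M}) = \dim\Ker(\mathscr{M}) - \codim\Ran(\mathscr{M}) = \ind(\mathscr{M}) .
\]

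There is no substantial obstacle here: this is a bookkeeping statement, and the only mildly subtle points are noticing that ``closed range'' means the same thing in both pictures because the range lives in~$\mathsf{H}$ with its original topology, and that the completeness of the graph-norm domain—needed for $\mathcal{M}$ to qualify as a bounded operator between Hilbert spaces to which the notion of Fredholmness applies—is precisely what the hypothesis that $\mathscr{M}$ is closed supplies.
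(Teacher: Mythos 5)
Your proof is correct and is exactly the standard bookkeeping argument: the paper itself offers no proof of this lemma, delegating it to \cite[Chap.~I.3]{Edmunds-Evans}, and your reasoning (boundedness from the graph norm, identity of kernel and range as sets, and the purely algebraic/topological invariance of the three defining Fredholm conditions) is what that reference supplies. Nothing further is needed.
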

\begin{lemma}\label{lem.rel.comp}
Let $(\mathscr{M},\mathsf{Dom}(\mathscr{M}))$ be a closed invertible operator
and $(\mathscr{P},\mathsf{Dom}(\mathscr{M}))$ another operator 
in a common Hilbert space~$\mathsf{H}$. 
Assume that
$(\mathscr{M}+\mathscr{P},\mathsf{\mathsf{Dom}}(\mathscr{M}))$ is closed 
and $\mathscr{P}\mathscr{M}^{-1}$ is compact. 
Then the operator $(\mathscr{M}+\mathscr{P},\mathsf{\mathsf{Dom}}(\mathscr{M}))$ 
is Fredholm and $\ind(\mathscr{M}+\mathscr{P})=\ind(\mathscr{M})=0$.
\end{lemma}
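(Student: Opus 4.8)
The plan is to factorise $\mathscr{M}+\mathscr{P}$ through the invertible operator $\mathscr{M}$ and thereby reduce the Fredholm statement to the classical Riesz--Schauder theory for $I$ plus a compact operator. First I would note that, since $\mathscr{M}$ is closed and bijective from $\mathsf{Dom}(\mathscr{M})$ onto $\mathsf{H}$, the closed graph theorem gives $\mathscr{M}^{-1}\in\mathcal{L}(\mathsf{H})$; consequently $K:=\mathscr{P}\mathscr{M}^{-1}$ is an everywhere-defined \emph{bounded} compact operator on~$\mathsf{H}$. For every $u\in\mathsf{Dom}(\mathscr{M})$ one then has the identity $(\mathscr{M}+\mathscr{P})u=(I+K)\mathscr{M}u$, so that $\mathscr{M}+\mathscr{P}=(I+K)\mathscr{M}$ as operators with domain $\mathsf{Dom}(\mathscr{M})$.

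Next I would invoke the Fredholm alternative: $I+K$ is a bounded Fredholm operator on~$\mathsf{H}$ with $\ind(I+K)=0$, i.e.\ $\Ker(I+K)$ is finite-dimensional, $\Ran(I+K)$ is closed with finite codimension, and $\dim\Ker(I+K)=\codim\Ran(I+K)$. Using that $\mathscr{M}$ is a bijection of $\mathsf{Dom}(\mathscr{M})$ onto~$\mathsf{H}$, the factorisation yields $\Ker(\mathscr{M}+\mathscr{P})=\mathscr{M}^{-1}\big(\Ker(I+K)\big)$, which is finite-dimensional, and $\Ran(\mathscr{M}+\mathscr{P})=\Ran(I+K)$, which is closed with finite codimension. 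Hence $\mathscr{M}+\mathscr{P}$ is Fredholm, and
\[
  \ind(\mathscr{M}+\mathscr{P})
  =\dim\Ker(I+K)-\codim\Ran(I+K)
  =\ind(I+K)=0
  \,.
\]
Finally, $\ind(\mathscr{M})=0$ is immediate because $\mathscr{M}$ is bijective. (Alternatively, one may quote the multiplicativity of the index under composition, $\ind((I+K)\mathscr{M})=\ind(I+K)+\ind(\mathscr{M})$, but the direct computation above sidesteps the need for a composition theorem involving the unbounded factor.)

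I do not expect a genuine obstacle here, as this is classical perturbation theory; the only points requiring a little care are the boundedness of $\mathscr{M}^{-1}$ (so that $K$ is defined and bounded on all of~$\mathsf{H}$, not merely densely), and the status of the closedness of $\mathscr{M}+\mathscr{P}$, which is taken as a hypothesis so as to make ``Fredholm'' meaningful in the conventions of Appendix~\ref{sec.A}. One could instead derive this closedness from $\mathscr{M}+\mathscr{P}=(I+K)\mathscr{M}$ together with the closedness of~$\mathscr{M}$ and the boundedness of~$I+K$, but since the statement already assumes it, I would simply use it.
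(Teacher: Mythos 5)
Your proof is correct. The paper does not actually prove this lemma; it is stated as a standard result and the proof is deferred to \cite[Thm.~IX.2.1]{Edmunds-Evans}. Your argument is the expected one: the closed graph theorem gives $\mathscr{M}^{-1}\in\mathcal{L}(\mathsf{H})$, the factorisation $\mathscr{M}+\mathscr{P}=(I+K)\mathscr{M}$ with $K=\mathscr{P}\mathscr{M}^{-1}$ compact reduces the claim to the Riesz--Schauder alternative for $I+K$, and the bijection $\mathscr{M}:\mathsf{Dom}(\mathscr{M})\to\mathsf{H}$ transports kernel and range so that $\dim\Ker(\mathscr{M}+\mathscr{P})=\dim\Ker(I+K)<\infty$ and $\Ran(\mathscr{M}+\mathscr{P})=\Ran(I+K)$ is closed of finite codimension, giving index $0$. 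Your side remarks are also accurate: the closedness hypothesis on $\mathscr{M}+\mathscr{P}$ is in fact redundant (it follows from the factorisation together with the closedness of $\mathscr{M}$ and the boundedness of $I+K$, once one observes that $\Ker(I+K)$ being finite-dimensional forces the graph of $(I+K)\mathscr{M}$ to be closed), and the index computation can equally be done via multiplicativity of the Fredholm index under the composition $(I+K)\circ\mathscr{M}$; your direct kernel/range computation is the more elementary route and requires no such composition theorem. This matches what the cited reference establishes, so there is nothing to flag.
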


\begin{lemma}\label{lem.gru}
Let $(\mathscr{M},\mathsf{Dom}(\mathscr{M}))$ be a closed operator 
in a Hilbert space~$\mathsf{H}$ with a non-empty resolvent set 
and let $\triangle$ be an open connected subset of 
\begin{equation*}
\{ z \in \C \; : \, \mathscr M - z \ \text{ is Fredholm} \}.
\end{equation*}
If $\triangle \cap \rho(\mathscr M) \neq \emptyset$, 
then  $\mathsf{sp}(\mathscr M) \cap\triangle$ 
is a countable set, 
with no accumulation point in $\triangle$, 
consisting of eigenvalues of~$\mathscr M$ 
with finite algebraic multiplicities.
\end{lemma}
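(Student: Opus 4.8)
The plan is to prove this by the classical analytic Fredholm argument (Gohberg--Sigal theory), realising $\mathscr{M}-z$ as a holomorphic family of index-zero Fredholm operators and reducing it locally to a finite-dimensional holomorphic problem by the Grushin (Lyapunov--Schmidt) method; this is in fact precisely the content of \cite[Thm.~XVII.2.1]{Gohberg-1990}, which one may simply invoke, but the argument is short enough to recall. As a preliminary step I would note two things. First, the Fredholm index is locally constant on the open set where $\mathscr{M}-z$ is Fredholm; since $\triangle$ is connected and meets $\rho(\mathscr{M})$ — where $\mathscr{M}-z$ is invertible, hence of index $0$ — the index equals $0$ everywhere on $\triangle$. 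Second, it is convenient to regard $\mathscr{M}$ as a \emph{bounded} operator from the Hilbert space $X:=(\Dom(\mathscr{M}),\|\cdot\|_{\mathscr{M}})$ into $\mathsf{H}$ (the first unnamed Lemma of this appendix makes this precise and preserves Fredholmness and index), so that $z\mapsto\mathscr{M}-z$ is an affine, hence holomorphic, family in $\mathcal{L}(X,\mathsf{H})$.

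Next I would perform the local Grushin reduction around an arbitrary $\lambda\in\triangle$. Put $N:=\dim\Ker(\mathscr{M}-\lambda)=\codim\Ran(\mathscr{M}-\lambda)<\infty$, pick $R_-\colon\C^N\to\mathsf{H}$ injective with range complementary to $\Ran(\mathscr{M}-\lambda)$ and $R_+\colon X\to\C^N$ surjective with kernel complementary to $\Ker(\mathscr{M}-\lambda)$, and form
\[
  \mathcal{G}(z):=\begin{pmatrix}\mathscr{M}-z & R_-\\ R_+ & 0\end{pmatrix}\colon X\times\C^N\longrightarrow\mathsf{H}\times\C^N .
\]
A routine check shows $\mathcal{G}(\lambda)$ is bijective; since $z\mapsto\mathcal{G}(z)$ is holomorphic, a Neumann series shows $\mathcal{G}(z)$ stays bijective on a disc $D$ around $\lambda$, and writing $\mathcal{G}(z)^{-1}=\bigl(\begin{smallmatrix}E(z)&E_+(z)\\E_-(z)&E_{-+}(z)\end{smallmatrix}\bigr)$, all four blocks are holomorphic on $D$, $E_{-+}(z)$ is an $N\times N$ matrix, and the standard Schur-complement identities give that $\mathscr{M}-z$ is invertible iff $E_{-+}(z)$ is, in which case $(\mathscr{M}-z)^{-1}=E(z)-E_+(z)E_{-+}(z)^{-1}E_-(z)$; moreover $\dim\Ker(\mathscr{M}-z)=\dim\Ker E_{-+}(z)$ on $D$.

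Then I would globalise. On each such disc, $\mathsf{sp}(\mathscr{M})\cap D$ is the zero set of the holomorphic scalar $z\mapsto\det E_{-+}(z)$; changing the auxiliary data $R_\pm$ multiplies $E_{-+}$ on either side by holomorphic invertible matrices, so this zero set is independent of the choices and is thus an intrinsic subset of $\triangle$. If it were not discrete it would, by the identity theorem on the connected set $\triangle$, be all of $\triangle$, i.e.\ $\mathsf{sp}(\mathscr{M})\supset\triangle$, contradicting $\triangle\cap\rho(\mathscr{M})\neq\emptyset$. Hence $\mathsf{sp}(\mathscr{M})\cap\triangle$ is closed and discrete in $\triangle$ — in particular countable with no accumulation point in $\triangle$ — and each of its points $\lambda$ is an eigenvalue (index $0$ plus non-invertibility forces $\dim\Ker(\mathscr{M}-\lambda)\geq1$) with closed range. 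Finally, for such an isolated $\lambda$ I would insert the local formula for $(z-\mathscr{M})^{-1}$ into the eigenprojection \eqref{projection} with $\Gamma_\lambda\subset D$ enclosing only $\lambda$: the holomorphic pieces $E,E_+,E_-$ integrate to $0$, so $P_\lambda=\frac{1}{2i\pi}\int_{\Gamma_\lambda}E_+(z)E_{-+}(z)^{-1}E_-(z)\,\der z$; since $E_{-+}(z)^{-1}$ is a meromorphic $N\times N$-matrix function, this contour integral picks out finitely many Laurent coefficients and has range contained in a finite-dimensional subspace of $\mathsf{H}$, whence $\mathrm{rank}\,P_\lambda<\infty$ and $\lambda$ has finite algebraic multiplicity.

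The main obstacle is the local Grushin step in the closed/unbounded setting: one must carry it out in the graph-norm space $X$, verify carefully both the bijectivity of $\mathcal{G}(\lambda)$ and the equivalence ``$\mathscr{M}-z$ invertible $\iff$ $E_{-+}(z)$ invertible'' together with the resolvent formula, and — crucially for the global step — check that the vanishing set of $\det E_{-+}$ does not depend on the auxiliary choices, so that the identity-theorem argument can be run over the whole connected $\triangle$. Once this machinery is in place, the index bookkeeping, the Neumann series, and the finite-rank evaluation of $P_\lambda$ are routine.
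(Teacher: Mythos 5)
The paper does not actually prove this lemma; it simply refers to \cite[Thm.~XVII.2.1]{Gohberg-1990}, which is exactly the reference you identify, and your Grushin/analytic-Fredholm sketch is the standard proof of that theorem and is correct in all essentials (index constancy on the connected set $\triangle$, local reduction to the finite matrix $E_{-+}(z)$, and finite rank of $P_\lambda$ via the meromorphy of $E_{-+}(z)^{-1}$). The only cosmetic point is that, since $\det E_{-+}$ is defined only locally, the global step is formally an open--closed connectedness argument on $\triangle$ rather than a literal application of the identity theorem, but you have the needed ingredients (local holomorphy and independence of the zero set from the auxiliary data $R_\pm$) in place.
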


\newcommand{\etalchar}[1]{$^{#1}$}


\begin{thebibliography}{BBDCZ10}

\bibitem[AAD01]{Abramov-Aslanyan-Davies_2001}
A.~A. Abramov, A.~Aslanyan, and E.~B. Davies.
\newblock Bounds on complex eigenvalues and resonances.
\newblock {\em J. Phys. A: Math. Gen.}, 34:57--72, 2001.

\bibitem[Agm85]{Agmon85}
S.~Agmon.
\newblock Bounds on exponential decay of eigenfunctions of {S}chr\"odinger
  operators.
\newblock In {\em Schr\"odinger operators ({C}omo, 1984)}, volume 1159 of {\em
  Lecture Notes in Math.}, pages 1--38. Springer, Berlin, 1985.

\bibitem[AH14]{AH2014}
Y.~Almog and B.~Helffer.
\newblock Global stability of the normal state of superconductors in the
  presence of a strong electric current.
\newblock {\em Comm. Math. Phys.}, 330:1021--1094, 2014.

\bibitem[AH15]{AlmogHe15}
Y.~{Almog} and B.~{Helffer}.
\newblock {On the spectrum of non-selfadjoint Schr\"odinger operators with
  compact resolvent.}
\newblock {\em {Commun. Partial Differ. Equations}}, 40(8):1441--1466, 2015.

\bibitem[BBDCZ10]{Bonnet-BenDhia-2010-234}
A.~S. Bonnet-Ben~Dhia, P.~Ciarlet, Jr., and C.~M. Zw{\"o}lf.
\newblock Time harmonic wave diffraction problems in materials with
  sign-shifting coefficients.
\newblock {\em J. Comput. Appl. Math.}, 234(6):1912--1919, 2010.

\bibitem[BH99]{Brown-1999-124}
R.~C. Brown and D.~B. Hinton.
\newblock Two separation criteria for second order ordinary or partial
  differential operators.
\newblock {\em Math. Bohem.}, 124(2-3):273--292, 1999.

\bibitem[BR14]{boucletr14}
J.-M. Bouclet and J.~Royer.
\newblock Local energy decay for the damped wave equation.
\newblock {\em J. Funct. Anal.}, 266(2):4538--4615, 2014.

\bibitem[BST15]{Boegli-2015}
S.~B{\"o}gli, P~Siegl, and C.~Tretter.
\newblock {Approximations of spectra of Schr\"odinger operators with complex
  potential on $\R^d$}.
\newblock arXiv: 1512.01826, 2015.

\bibitem[CGR10]{Cappiello-2010-111}
M.~Cappiello, T.~Gramchev, and L.~Rodino.
\newblock {Entire extensions and exponential decay for semilinear elliptic
  equations}.
\newblock {\em J. Anal. Math.}, 111:339--367, 2010.

\bibitem[Dav95]{Davies-1995}
E.~B. Davies.
\newblock {\em Spectral theory and differential operators}, volume~42 of {\em
  Cambridge Studies in Advanced Mathematics}.
\newblock Cambridge University Press, Cambridge, 1995.

\bibitem[DS88]{DS2}
N.~Dunford and J.~T. Schwartz.
\newblock {\em {L}inear {O}perators, {S}pectral {T}heory, {S}elf {A}djoint
  {O}perators in {H}ilbert {S}pace, {P}art 2}.
\newblock John Wiley \& Sons, Inc., New York, 1988.

\bibitem[Duf83]{Duf83}
A.~Dufresnoy.
\newblock Un exemple de champ magn\'etique dans {${\bf R}^{\nu }$}.
\newblock {\em Duke Math. J.}, 50(3):729--734, 1983.

\bibitem[EE87]{Edmunds-Evans}
D.~E. Edmunds and W.~D. Evans.
\newblock {\em Spectral theory and differential operators}.
\newblock Oxford University Press, Oxford, 1987.

\bibitem[EG78]{Everitt-1978-79}
W.~N. Everitt and M.~Giertz.
\newblock Inequalities and separation for {S}chr\"odinger type operators in
  {$L_{2}({\bf R}^{n})$}.
\newblock {\em Proc. Roy. Soc. Edinburgh Sect. A}, 79(3-4):257--265, 1977/78.

\bibitem[Exn85]{Exner-open}
P.~Exner.
\newblock {\em Open Quantum Systems and {F}eynman Integrals}.
\newblock D. Reidel Publishing Company, Dordrecht, 1985.

\bibitem[EZ78]{Evans-1978-8}
W.~D. Evans and A.~Zettl.
\newblock Dirichlet and separation results for {S}chr\"odinger-type operators.
\newblock {\em Proc. Roy. Soc. Edinburgh Sect. A}, 80(1-2):151--162, 1978.

\bibitem[FKV16]{FKV}
L.~Fanelli, D.~Krej\v{c}i\v{r}\'{\i}k, and L.~Vega.
\newblock Spectral stability of {S}chr{\"o}dinger operators with subordinated
  complex potentials.
\newblock {\em J. Spectr. Theory}, 2016.

\bibitem[GGK90]{Gohberg-1990}
I.~Gohberg, S.~Goldberg, and M.~A. Kaashoek.
\newblock {\em Classes of Linear Operators}, volume~1.
\newblock Birkh{\"a}user Verlag Basel, 1990.

\bibitem[GKMV13]{Kostrykin_2013}
L.~Grubi\v{s}i\'c, V.~Kostrykin, K.~A. Makarov, and K.~Veseli\'c.
\newblock Representation theorems for indefinite quadratic forms revisited.
\newblock {\em Mathematika}, 59:169--189, 2013.

\bibitem[HM88]{HM88}
B.~Helffer and A.~Mohamed.
\newblock Caract\'erisation du spectre essentiel de l'op\'erateur de
  {S}chr\"odinger avec un champ magn\'etique.
\newblock {\em Ann. Inst. Fourier (Grenoble)}, 38(2):95--112, 1988.

\bibitem[Kat66]{Kato}
T.~Kato.
\newblock {\em Perturbation Theory for Linear Operators}.
\newblock Springer-Verlag, Berlin, 1966.

\bibitem[KS15]{KS-book}
D.~Krej\v{c}i\v{r}\'{\i}k and P.~Siegl.
\newblock {\em Elements of spectral theory without the spectral theorem}.
\newblock In \emph{Non-selfadjoint operators in quantum physics: Mathematical
  aspects} (432 pages), F. Bagarello, J.-P. Gazeau, F. H. Szafraniec, and M.
  Znojil, Eds. Wiley-Interscience, 2015.

\bibitem[Per60]{Persson60}
A.~Persson.
\newblock Bounds for the discrete part of the spectrum of a semi-bounded
  {S}chr\"odinger operator.
\newblock {\em Math. Scand.}, 8:143--153, 1960.

\bibitem[Ray16]{Raymond}
N.~Raymond.
\newblock {\em Bound states of the {M}agnetic {S}chr{\"o}dinger {O}perator}.
\newblock EMS Tracts in Mathematics. 2016.

\bibitem[RBM{\etalchar{+}}12]{Regensburger_2012}
A.~Regensburger, Ch. Bersch, M.-A. Miri, G.~Onishchukov, D.~N. Christodoulides,
  and U.~Peschel.
\newblock Parity-time synthetic photonic lattices.
\newblock {\em Nature}, 488:167--171, 2012.

\bibitem[Sib75]{Sibuya-1975}
Y.~Sibuya.
\newblock {\em {Global theory of a second order linear ordinary differential
  equation with a polynomial coefficient}}.
\newblock North-Holland Publishing Co., Amsterdam, 1975.

\bibitem[Sj{\"o}00]{sjostrand00}
J.~Sj{\"o}strand.
\newblock Asymptotic distribution of eigenfrequencies for damped wave
  equations.
\newblock {\em Publ. RIMS, Kyoto Univ.}, 36:573--611, 2000.

\bibitem[SK12]{SK}
P.~Siegl and D.~Krej\v{c}i\v{r}\'{i}k.
\newblock On the metric operator for the imaginary cubic oscillator.
\newblock {\em Phys. Rev. D}, 86:121702(R), 2012.

\bibitem[tESV15]{Elst-2015-269}
A.~F.~M. ter Elst, M.~Sauter, and H.~Vogt.
\newblock {A generalisation of the form method for accretive forms and
  operators}.
\newblock {\em J. Funct. Anal.}, 269:705--744, 2015.

\end{thebibliography}
\end{document}